\numberwithin{equation}{section}
\newtheorem{Theorem}{Theorem}[section]
\newtheorem{Corollary}[Theorem]{Corollary}
\newtheorem{Lemma}[Theorem]{Lemma}
\newtheorem{Proposition}[Theorem]{Proposition}
\newtheorem{Propanddef}[Theorem]{Proposition and Definition}
\theoremstyle{definition}
\newtheorem{Definition}[Theorem]{Definition}
\newtheorem{Example}[Theorem]{Example}
\newtheorem{Remark}[Theorem]{Remark}
\newcommand{\IR}{\mathbb{R}}
\newcommand{\IN}{\mathbb{N}}
\newcommand{\IZ}{\mathbb{Z}}
\newcommand*{\longhookrightarrow}{\ensuremath{\lhook\joinrel\relbar\joinrel\rightarrow}}
\newcommand{\f}{\frac}
\newcommand{\id}{\operatorname{id}}
\newcommand{\Sym}{\mathsf{S}}
\newcommand{\garb}{\mathscr{C}^{\infty}}
\newcommand{\Verti}{\mathsf{V}}
\newcommand{\vbdl}[1]{\Verti(#1)}
\newcommand{\vbdlup}[1]{#1^\Verti}
\newcommand{\calC}{\mathscr{C}}
\newcommand{\calX}{\mathscr{X}}
\newcommand{\locC}{\mathscr{C}_{\textup{loc}}^\infty}
\newcommand{\locX}{\mathscr{X}_{\textup{loc}}^\infty}
\newcommand{\locOmega}[1]{\Omega^{#1}_{\textup{loc}}}
\newcommand{\lockOmega}{\Omega^k_{\textup{loc}}}
\newcommand{\jet}{\mathsf{j}}
\newcommand{\Jet}{\mathsf{J}}
\newcommand{\jetfib}{\mathsf{F}}
\newcommand{\IEE}{\mathsf{E}}
\newcommand{\Id}{\mathrm{d}}
\newcommand{\IT}{\mathrm{T}}
\newcommand{\beginaufz}{\begin{enumerate}[leftmargin=0cm,itemindent=.5cm,labelwidth=\itemindent,labelsep=0cm,align=left,label={\rm \alph*)}]}
\begin{document}

\allowdisplaybreaks

\newcommand{\arXivNumber}{1308.1005}

\renewcommand{\PaperNumber}{003}

\FirstPageHeading

\ShortArticleName{Formal Solution Spaces of Formally Integrable PDEs}
\ArticleName{The Prof\/inite Dimensional Manifold Structure\\ of Formal Solution Spaces\\ of Formally Integrable PDEs}

\Author{Batu G\"UNEYSU~$^\dag$ and Markus J.~PFLAUM~$^\ddag$}

\AuthorNameForHeading{B.~G\"uneysu and M.J.~Pf\/laum}

\Address{$^\dag$~Institut f\"ur Mathematik, Humboldt-Universit\"at, Rudower Chaussee 25, 12489 Berlin, Germany}
\EmailD{\href{mailto:gueneysu@math.hu-berlin.de}{gueneysu@math.hu-berlin.de}}

\Address{$^\ddag$~Department of Mathematics, University of Colorado, Boulder CO 80309, USA}
\EmailD{\href{mailto:markus.pflaum@colorado.edu}{markus.pflaum@colorado.edu}}
\URLaddressD{\url{http://math.colorado.edu/~pflaum/}}

\ArticleDates{Received March 30, 2016, in f\/inal form January 05, 2017; Published online January 10, 2017}

\Abstract{In this paper, we study the formal solution space of a nonlinear PDE in a~f\/iber bundle. To this end, we start with foundational material and introduce the notion of a pfd structure to build up a new concept of prof\/inite dimensional manifolds. We show that the inf\/inite jet space of the f\/iber bundle is a prof\/inite dimensional manifold in a~natural way. The formal solution space of the nonlinear PDE then is a subspace of this jet space, and inherits from it the structure of a prof\/inite dimensional manifold, if the PDE is formally integrable. We apply our concept to scalar PDEs and prove a~new criterion for formal integrability of such PDEs. In particular, this result entails that the Euler--Lagrange equation of a~relativistic scalar f\/ield with a polynomial self-interaction is formally integrable.}

\Keywords{prof\/inite dimensional manifolds; jet bundles; geometric PDEs; formal integrability; scalar f\/ields}

\Classification{58A05; 58A20; 35A30}

\section{Introduction}
Even though it appears to be unsolvable in general, the problem to describe the moduli space of solutions of a~particular nonlinear PDE has led to powerful new results in geometric analysis and mathematical physics. Notably this can be seen, for example, by the fundamental work on the structure of the moduli space of
Yang--Mills equations \cite{AtiHitSin, Don, Tau}. Among the many challenging problems which arise when studying moduli spaces of solutions of nonlinear PDEs
is that the space under consideration does in general not have a manifold structure, usually not even one modelled on an inf\/inite dimensional Hilbert or Banach space. Moreover, the solution space can possess singularities. A way out of this dilemma is to study compactif\/ications of the moduli space like the completion of the moduli space with respect to a certain Sobolev metric, cf.~\cite{FreUhl}. Another way, and that is the one we are advocating in this article, is to consider a~``coarse'' moduli space consisting of so-called formal solutions of a PDE, i.e., the space of those smooth functions whose power series expansion at each point solves the PDE. In case the PDE is formally integrable in a sense def\/ined in this article, the formal solution space turns out to be a prof\/inite dimensional manifold. These possibly inf\/inite dimensional spaces are ringed spaces which can be regarded as projective limits of projective systems of f\/inite dimensional manifolds.

Prof\/inite dimensional manifolds appear naturally in several areas of mathematics, in par\-ticular in deformation quantization, see for example \cite{PflDQSO}, the structure theory of Lie-projective groups~\cite{BicLPG,hof}, in connection with functional integration on spaces of connections~\cite{ash}, and in the secondary calculus invented by Vinogradov \cite{krasvino,KraLycVinGJSNPDE, vino} which inspired the approach in this paper, cf.~in particular \cite[Chapter~7]{KraLycVinGJSNPDE}. It is to be expected that the theory of prof\/inite dimensional manifolds as set up in this paper will have further applications, for example for the (classical) perturbation theory of PDEs in mathematical physics, where one should understand a perturbation as a deformation, i.e., a smooth family of prof\/inite dimensional solution manifolds of a~(formally integrable) PDE depending on a parameter. Work on this is in progress.

The paper consists of two main parts. The f\/irst, Section~\ref{pfd}, lays out the foundations of the theory of prof\/inite dimensional manifolds. Besides the papers~\cite{bern} and~\cite{AbbManDSPLM}, where the latter is taylored towards explaining the dif\/ferential calculus by Ashtekar and Lewandowski~\cite{ash}, literature on prof\/inite dimensional manifolds is scarce. Moreover, our approach to prof\/inite dimensional manifolds is novel in the sense that we def\/ine them as ringed spaces together with a so-called pfd structure, which consists not only of one but a whole equivalence class of representations by projective systems of f\/inite dimensional manifolds. The major point hereby is that all the projective systems appearing in the pfd structure induce the same structure sheaf, which allows to def\/ine dif\/ferential geometric concepts depending only on the pfd structure and not a particular representative. One way to construct dif\/ferential geometric objects is by dualizing projective limits of manifolds to injective limits of, for example, dif\/ferential forms, and then sheaf\/ify the thus obtained presheaves of ``local'' objects. Again, it is crucial to observe that these sheaves are independent of the particular choice of a~representative within the pfd structure, whereas the ``local'' objects obtain a f\/iltration which depends on the choice of a particular representative.
Using variants of this approach or directly the structure sheaf of smooth functions, we introduce in Section~\ref{pfd} tangent bundles of prof\/inite dimensional manifolds and their higher tensor powers, vector f\/ields, and dif\/ferential forms.

The second main part is Section~\ref{fsnlpde}, where we introduce the formal solution space of a nonlinear PDE. We f\/irst explain the necessary concepts from jet bundle theory and on prolongations of PDEs in f\/iber bundles, following essentially Goldschmidt \cite{gold}, cf.~also \cite{krasvino, pomm,vino,vita}. In Section~\ref{lini} we introduce in the jet bundle setting a notion of an operator symbol of a nonlinear PDE such that, in the linear case, it coincides with the well-known (principal) symbol of a partial dif\/ferential operator up to canonical isomorphisms. The corresponding result, Proposition~\ref{PropLinSymb}, appears to be folklore; see~\cite{spencer69} and~\cite[Section~IV.2]{KraLycVinGJSNPDE} for related work. Afterwards, we show that the bundle of inf\/inite jets is a prof\/inite dimensional manifold. This result immediately entails that the formal solution space of a formally integrable PDE is a prof\/inite dimensional submanifold of the inf\/inite jet bundle. Finally, in Section~\ref{scal}, we consider scalar PDEs. We prove there a widely applicable criterion for the formal integrability of scalar PDEs, which to our knowledge has not appeared in the mathematical literature yet. Moreover, we conclude from our criterion that the Euler--Lagrange equation of a relativistic scalar f\/ield with a polynomial self-interaction on an arbitrary Lorentzian manifold is formally integrable, so its formal solution space is a prof\/inite dimensional manifold. We expect that this observation will be of avail when clarifying the Poisson structure~\cite{marsden,vita,zuckerman} and quantization theory~-- possibly through deformation~-- of such scalar f\/ield theories, cf.~\cite{duetschfredenhagen,rejzner}.

\section{Some notation}

Let us introduce some notation and conventions which will be used throughout the paper.

If nothing else is said, all manifolds and corresponding concepts, such as submersions, bundles etc., are understood to be smooth and f\/inite dimensional.
The symbol $\IT^{k,l}$ stands for the functor of $k$-times contravariant and $l$-times covariant tensors, where as usual $\IT:=\IT^{1,0}$ and $\IT^{*}:=\IT^{0,1}$. If $X$ is a manifold, then the corresponding tensor bundles will be denoted by~$\pi_{\IT^{k,l} X}\colon \IT^{k,l} X\to X$. Moreover, we write $ \calX^\infty$ and $ \Omega^k$ for the sheaves of smooth vector f\/ields and of smooth $k$-forms, respectively.

Given a f\/ibered manifold, i.e., a surjective submersion $\pi\colon E\to X$, we write $\Gamma^{\infty}(\pi)$ for the sheaf of smooth sections of $\pi$. Its space of sections over an open $U \subset X$ will be denoted by~$\Gamma^{\infty}(U;\pi)$. The set of \emph{local smooth sections of} $\pi$ \emph{around a point} $p\in M$ is the set of smooth sections def\/ined on some open neighborhood of $p$ and will be denoted by $\Gamma^{\infty}(p;\pi)$. The stalk at $p$ then is a quotient space of $\Gamma^{\infty}(p;\pi)$ and is written as $\Gamma^{\infty}_p(\pi)$.

The \emph{vertical vector bundle} corresponding to the f\/ibered manifold $\pi$ is def\/ined as the subvector bundle
\begin{gather*}
 \vbdlup{\pi}\colon \ \vbdl{\pi}:=\operatorname{ker}(\IT\pi) \longrightarrow E
\end{gather*}
of $\pi_{\IT E}\colon \IT E\rightarrow E$. If $\pi'\colon E'\to X$ is a second f\/ibered manifold, the \emph{vertical morphism} corresponding to a morphism $h\colon E \to E'$ of f\/ibered manifolds over $X$ is given by
\begin{gather*}
 \vbdlup{h} \colon \ \vbdl{\pi}\longrightarrow \vbdl{\pi'} ,\qquad v \longmapsto \IT h (v).
\end{gather*}

If $\pi\colon E\to X$ is a vector bundle, then the f\/ibers of $\pi$ are $\IR$-vector spaces, hence one can apply tensor functors f\/iberwise to obtain the corresponding tensor bundles. In particular, $\pi^{\odot^k}\colon \Sym^k(\pi)\to X$ will stand for the \emph{$k$-fold symmetric tensor product bundle of} $\pi$.

Finally, unless otherwise stated, the notions ``projective system'' and ``projective limit'' will always be understood in the category of topological spaces,
where they of course exist; see \cite[Chapter~VIII, Section~3]{EilSteFAT}. In fact, given such a projective system $( M_i , \mu_{ij})_{i,j\in\IN, i\leq j}$, a distinguished projective limit is given as follows. Def\/ine
\begin{gather*}
 M:= \bigg\{ (p_i)_{i\in \IN}\in \prod_{i\in \IN} M_i\,|\, \mu_{ij} (p_j) = p_i \text{ for all $i,j\in \IN$ with $i \leq j$} \bigg\}
\end{gather*}
to be the subspace of all threads in the product, and the continuous maps $\mu_j \colon M \rightarrow M_j$ as the restrictions of the canonical projections
$\prod_{i\in \IN} M_i \rightarrow M_j$ to $M$. Then one obviously has $\mu_{ij} \circ \mu_j =\mu_i$ for all $i,j \in \IN$ with $i\leq j$. Note that a basis of the topology of $M$ is given by the set of all open sets of the form $\mu_i^{-1} (U)$, where $i\in \IN$ and $U \subset M_i$ is open. In the following, we will refer to the thus def\/ined $M$ together with the maps $(\mu_i)_{i\in\IN}$ as \emph{the canonical projective limit} of $( M_i , \mu_{ij})_{i,j\in\IN, i\leq j} $, and denote it by $M= \lim\limits_{\longleftarrow \atop i\in \IN} M_i$.

\section{Prof\/inite dimensional manifolds}\label{pfd}

In this section, we introduce the concept of prof\/inite dimensional manifolds and establish the dif\/ferential geometric foundations of this new category.
For comparison and further reading on this topic we refer to \cite{AbbManDSPLM,DodGalVasGFC} and \cite[Section~1.4]{PflDQSO}.

\subsection{The category of prof\/inite dimensional manifolds}

The following def\/inition lies in the center of the paper:

\begin{Definition}\quad
 \begin{enumerate}\itemsep=0pt
 \item[a)] By a \emph{smooth projective system} we understand a family $( M_i , \mu_{ij} )_{i,j \in \IN, i \leq j}$ of smooth mani\-folds $M_i$ and surjective submersions $\mu_{ij} \colon M_j \rightarrow M_i $ for $i \leq j$ such that the following conditions hold true:
 \begin{enumerate}[itemindent=0mm,leftmargin=3.5em,labelwidth=0mm,labelsep=2mm,align=right,label={\rm (SPS\arabic*)}]\itemsep=0pt
 \item\label{IteSPS1}
 $\mu_{ii} =\id_{M_i}$ for all $i \in \IN$.
 \item\label{IteSPS2}
 $\mu_{ij} \circ \mu_{jk} = \mu_{ik}$ for all $i,j,k \in \IN$ such that $ i \leq j \leq k$.
 \end{enumerate}

 \item[b)] If $( M_a' , \mu_{ab}' )_{a,b \in \IN, \, a \leq b}$ denotes a second smooth projective system, a \emph{morphism of smooth projective systems} between $( M_i , \mu_{ij} )_{i,j \in \IN,\, i \leq j}$ and $( M_a' , \mu_{ab}' )_{a,b \in \IN,\, a \leq b}$ is a pair $( \varphi , (F_a)_{a \in \IN})$ consisting of a strictly increasing map $\varphi \colon \IN \rightarrow \IN$ and a family of smooth maps $F_a\colon$ $M_{\varphi (a)} \rightarrow M_a'$, $a \in \IN$ such that for each pair $a,b \in \IN$ with $a \leq b$ the diagram
\begin{gather*}
 \xymatrix{
 M_{\varphi (a)}\ar[d]_{F_a} &&\ar[ll]_{\mu_{\varphi(a) \varphi(b)}} M_{ \varphi (b)} \ar[d]^{F_b} \\
 M_a' && \ar[ll]_{\mu_{ab}'} M_b' }
\end{gather*}
 commutes. We usually denote a smooth projective system shortly by
 $\big( M_i , \mu_{ij} \big)$ and write
 \begin{gather*}
( \varphi , F_a)\colon \ ( M_i , \mu_{ij} ) \longrightarrow ( M_a' , \mu_{ab}')
 \end{gather*}
to indicate that $( \varphi , (F_a)_{a\in \IN})$ is a morphism of smooth projective systems. If each of the maps $F_a$ is a submersion (resp.~immersion), we call the morphism $( \varphi , F_a)$ a \emph{submersion} (resp.~\emph{immersion}).

 \item[c)]
 Two smooth projective systems $( M_i , \mu_{ij})$ and $( M_a', \mu_{ab}')$ are called \emph{equivalent}, if there are surjective submersions
 \begin{gather*}
( \varphi , F_a )\colon \ ( M_i , \mu_{ij} ) \longrightarrow ( M_a', \mu_{ab}' ),\qquad ( \psi , G_i)\colon \ ( M_a', \mu_{ab}' )
 \longrightarrow ( M_i, \mu_{ij} )
 \end{gather*}
 such that the diagrams
 \begin{gather*} 
 \xymatrix{ M_i && \ar[ll]_{\mu_{i\,\varphi(\psi(i))} } \ar[dl]^{F_{\psi (i)}} M_{\varphi(\psi(i))} \\
 & M_{\psi (i)}' \ar[ul]^{G_i}
 } \qquad \text{and}\qquad
 \xymatrix{ M_a' && \ar[ll]_{\mu_{a\,\psi( \varphi(a))}' } \ar[dl]^{G_{\varphi(a)}} M_{\psi(\varphi(a))}' \\
 & M_{\varphi (a)} \ar[ul]^{F_a}
 }
 \end{gather*}
 commute for all $i,a\in \IN$. A pair of such surjective submersions will be called an \emph{equivalence transformation of smooth projective systems}.
\end{enumerate}
\end{Definition}

\begin{Remark}
In the def\/inition of smooth projective systems and later in the one of smooth projective representations we use the partially ordered set $\IN$ as index set. Obviously, $\IN$ can be replaced there by any partially ordered set canonically isomorphic to $\IN$ such as an inf\/inite subset of $\IZ$ bounded from below. We will silently use this observation in later applications for convenience of notation.
\end{Remark}

\begin{Example}\label{Ex:sps} \quad
\begin{enumerate}\itemsep=0pt
\item[a)]
 Let $M$ be a manifold. Then $( M_i, \mu_{ij})$ with $M_i: =M$ and $\mu_{ij} := \id_M$ for $i\leq j$ is a smooth projective system which we call \emph{constant} and which we denote shortly by $(M, \id_M)$.

\item[b)] Assume that for $i\leq j$ one has given surjective linear maps $\lambda_{ij} \colon V_j \rightarrow V_i$ between real f\/inite dimensional vector spaces such that~\ref{IteSPS1} and~\ref{IteSPS2} are satisf\/ied. Then $( V_i , \lambda_{ij}) $ is a~smooth projective system. For example, this
 situation arises in deformation quantization of symplectic manifolds when constructing the completed symmetric tensor algebra of a~f\/inite dimensional real vector space; see~\cite{PflDQSO} for details. Of course, a simpler example is given by the canonical projections $\pi_{ij} \colon \IR^j \rightarrow \IR^i$ onto the f\/irst $i$ coordinates, hence $( \IR^i , \pi_{ij}) $ is a~(non-trivial) smooth projective system.

 \item[c)] 
 In the structure theory of topological groups \cite{BicLPG, hof} one considers smooth projective systems $(\mathsf{G}_i,\eta_{ij})$ such that each $\mathsf{G}_j$ is a Lie group and the $\eta_{ij}\colon \mathsf{G}_j\to \mathsf{G}_i$ are continuous group homomorphisms. See Example~\ref{Ex:pfdmfd}(c) below for a precise description of the projective limits of such projective systems of Lie groups.

 \item[d)] The tower of $k$-jets over a f\/iber bundle together with their canonical projections forms a~smooth projective system (see Section~\ref{finjet}).
\end{enumerate}
\end{Example}

Within the category of (smooth f\/inite dimensional) manifolds, a projective limit of a smooth projective system obviously does in general not exist. In the following, we will enlarge the category of manifolds by the so-called prof\/inite dimensional manifolds (and appropriate morphisms). The thus obtained category will contain projective limits of smooth projective systems.

\begin{Definition}\quad
\begin{enumerate}\itemsep=0pt
\item[a)] By a \emph{smooth projective representation} of a commutative locally $\IR$-ringed space $(M,\calC^\infty_M)$ we understand a smooth projective system $( M_i , \mu_{ij})$ together with a family of continuous maps $\mu_i \colon M \rightarrow M_i$, $i\in\IN$, such that the following conditions hold true:
\begin{enumerate}[itemindent=0mm,leftmargin=4em,labelwidth=0mm,labelsep=2mm,align=right,label={\rm (PFM\arabic*)}]\itemsep=0pt
\item\label{It:ProjLim}
 As a topological space, $M$ together with the family of maps $\mu_i$, $i\in\IN$, is a~projective limit of $( M_i , \mu_{ij})$.
\item\label{Ite:StrucSheaf}
 The section space $\calC^\infty_M(U)$ of the structure sheaf over an open subset $U\subset M$ is given by the set of all $f \in \calC (U)$ such that for every $p\in U$ there exists an $i\in \IN$, an open $U_i \subset M_i$ and an $f_i\in \calC^\infty(U_i)$ such that $p\in \mu_i^{-1} (U_i) \subset U$ and
 \begin{gather*}
 f_{|\mu_i^{-1} (U_i)} = f_i \circ {\mu_i}_{|\mu_i^{-1} (U_i)}
 \end{gather*}
 hold true.
\end{enumerate}
We usually denote a smooth projective representation brief\/ly as a family $( M_i , \mu_{ij} , \mu_i )$.
\item[b)] A smooth projective representation $( M_i , \mu_{ij}, \mu_i)$ of $(M,\calC^\infty_M)$ is said to be \emph{regular}, if each of the maps $\mu_{ij} \colon M_j \rightarrow M_i$ is a f\/iber bundle.
\item[c)] Two smooth projective representations $( M_i , \mu_{ij} , \mu_i )$ and $( M_a', \mu_{ab}' , \mu_a')$ of $(M,\calC^\infty_M)$ are called
 \emph{equivalent}, if there is an equivalence transformation of smooth projective systems
 \begin{gather*}
( \varphi , F_a )\colon \ ( M_i , \mu_{ij} ) \longrightarrow ( M_a', \mu_{ab}' ),\qquad ( \psi , G_i)\colon \ ( M_a', \mu_{ab}' )
 \longrightarrow ( M_i , \mu_{ij} )
 \end{gather*}
 such that
 \begin{gather*}
 \mu_i = G_i \circ \mu_{\psi (i)}' \qquad\text{and} \qquad \mu_a' = F_a \circ \mu_{\varphi (a)}
 \qquad\text{for all} \quad i,a\in \IN.
 \end{gather*}
In the following, we will sometimes call such a pair of surjective submersions an \emph{equivalence transformation of smooth projective representations}. The equivalence class of a smooth projective system $( M_i , \mu_{ij} ,\mu_i)$ will be simply denoted by $[( M_i , \mu_{ij} ,\mu_i)]$ and called a~\emph{pfd structure} on $(M,\calC^\infty_M)$.
\end{enumerate}
\end{Definition}

\begin{Proposition}\label{Prop:equivrepsystems}
Let $(M,\calC^\infty_M)$ be a commutative locally $\IR$-ringed space with a smooth projective representation $( M_i , \mu_{ij} , \mu_i )$. Assume further that $ ( M_a', \mu_{ab}')$ is a smooth projective system which is equivalent to $( M_i , \mu_{ij})$. Then there are continuous maps $\mu_a' \colon M \rightarrow M_a'$, $a\in\IN$, such that $( M_a', \mu_{ab}' , \mu_a' )$ becomes a smooth projective representation of $(M,\calC^\infty_M)$ which is equivalent to $( M_i , \mu_{ij} , \mu_i)$.
\end{Proposition}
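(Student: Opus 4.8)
The plan is to define the new projections by the only formula compatible with the desired equivalence \eqref{Eq:EquTransProjRep}, namely $\mu_a' := F_a \circ \mu_{\varphi(a)}$, and then to verify the two axioms \ref{It:ProjLim} and \ref{Ite:StrucSheaf} together with \eqref{Eq:EquTransProjRep}. First I would record two purely algebraic consequences of the hypotheses. From the morphism relation $F_a \circ \mu_{\varphi(a)\varphi(b)} = \mu_{ab}'\circ F_b$ and the thread identity $\mu_{\varphi(a)\varphi(b)}\circ\mu_{\varphi(b)} = \mu_{\varphi(a)}$, valid since $\varphi$ is strictly increasing and hence $\varphi(a)\le\varphi(b)$ for $a\le b$, one gets $\mu_{ab}'\circ\mu_b' = \mu_a'$; that is, the $\mu_a'$ form a compatible family over $\big(M_a',\mu_{ab}'\big)$. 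From the first triangle in \eqref{dia:equiv}, $G_i\circ F_{\psi(i)} = \mu_{i\,\varphi(\psi(i))}$, together with $\mu_{\psi(i)}' = F_{\psi(i)}\circ\mu_{\varphi(\psi(i))}$ and the thread identity, one obtains the first equation of \eqref{Eq:EquTransProjRep}, $\mu_i = G_i\circ\mu_{\psi(i)}'$. Here one uses that a strictly increasing self-map of $\IN$ satisfies $\psi(i)\ge i$, hence $\varphi(\psi(i))\ge i$, so that $\mu_{i\,\varphi(\psi(i))}$ is defined and the relevant thread relations apply.

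Next I would settle \ref{It:ProjLim}, the idea being that an equivalence of smooth projective systems induces mutually inverse homeomorphisms of the canonical topological projective limits. Explicitly, $(\varphi,F_a)$ and $(\psi,G_i)$ induce continuous maps $\hat F\colon \varprojlim M_i\to\varprojlim M_a'$, $(p_i)_i\mapsto \big(F_a(p_{\varphi(a)})\big)_a$, and $\hat G\colon \varprojlim M_a'\to\varprojlim M_i$, $(q_a)_a\mapsto\big(G_i(q_{\psi(i)})\big)_i$; these land in the respective limits by the morphism relations and are continuous because the limit topology is initial with respect to the projections. Using the two triangles in \eqref{dia:equiv} together with the thread identities exactly as above, one checks $\hat G\circ\hat F = \id$ and $\hat F\circ\hat G=\id$, so $\hat F$ is a homeomorphism. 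By \ref{It:ProjLim} for the given representation, $M$ with $(\mu_i)$ is a projective limit of $\big(M_i,\mu_{ij}\big)$, so there is a canonical homeomorphism $h\colon M\to\varprojlim M_i$ with $\mu_i = \nu_i\circ h$, where the $\nu_i$ denote the canonical projections. The composite $\hat F\circ h\colon M\to\varprojlim M_a'$ is then a homeomorphism whose $a$-th component is $\nu_a'\circ\hat F\circ h = F_a\circ\mu_{\varphi(a)} = \mu_a'$, where $\nu_a'$ are the canonical projections of $\varprojlim M_a'$. This exhibits $M$ together with $(\mu_a')$ as a projective limit of $\big(M_a',\mu_{ab}'\big)$, which is \ref{It:ProjLim}.

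It remains to identify the structure sheaf, i.e.\ \ref{Ite:StrucSheaf}. Since $\calC^\infty_M$ is already fixed by the given representation, I would show that the local factorization condition of \ref{Ite:StrucSheaf} through the maps $\mu_a'$ cuts out the same functions as the one through the maps $\mu_i$. For one inclusion, if $f$ agrees locally with $f_a'\circ\mu_a'$ for some smooth $f_a'$ on an open $U_a'\subset M_a'$, then since $\mu_a' = F_a\circ\mu_{\varphi(a)}$ and $F_a$ is smooth, $f$ agrees on the same set $(\mu_a')^{-1}(U_a') = \mu_{\varphi(a)}^{-1}\big(F_a^{-1}(U_a')\big)$ with $(f_a'\circ F_a)\circ\mu_{\varphi(a)}$, and $f_a'\circ F_a$ is smooth; hence $f\in\calC^\infty_M$. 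Conversely, if $f$ agrees locally with $f_i\circ\mu_i$ for smooth $f_i$ on open $U_i\subset M_i$, then using $\mu_i = G_i\circ\mu_{\psi(i)}'$ and smoothness of $G_i$, $f$ agrees with $(f_i\circ G_i)\circ\mu_{\psi(i)}'$ on $(\mu_{\psi(i)}')^{-1}\big(G_i^{-1}(U_i)\big)$, with $f_i\circ G_i$ smooth. Thus the two conditions describe the same sections, so \ref{Ite:StrucSheaf} holds for $\big(M_a',\mu_{ab}',\mu_a'\big)$. Finally \eqref{Eq:EquTransProjRep} holds by the very definition of $\mu_a'$ and by the identity $\mu_i = G_i\circ\mu_{\psi(i)}'$ derived above, so the two representations are equivalent.

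The main obstacle is the topological step \ref{It:ProjLim}: one must handle the index reparametrizations $\varphi,\psi$ carefully and, in particular, invoke the inequalities $\psi(i)\ge i$, $\varphi(\psi(i))\ge i$ and their counterparts to apply the thread identities, so that $\hat F$ and $\hat G$ are genuinely inverse to one another and the resulting homeomorphism intertwines the old projections $\mu_i$ with the new ones $\mu_a'$. Once this compatibility is secured, the sheaf-theoretic step \ref{Ite:StrucSheaf} is essentially routine, being a two-sided application of the smoothness of $F_a$ and $G_i$.
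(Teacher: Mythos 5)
Your proof is correct, and its skeleton coincides with the paper's: you define $\mu_a' := F_a \circ \mu_{\varphi(a)}$, and your treatment of axiom \ref{Ite:StrucSheaf} and of the identities \eqref{Eq:EquTransProjRep} (in particular the derivation of $\mu_i = G_i \circ \mu_{\psi(i)}'$ from the first triangle in \eqref{dia:equiv}) is the same two-sided use of the smoothness of the $F_a$ and $G_i$ that the paper employs. Where you genuinely diverge is in the verification of \ref{It:ProjLim}. The paper checks the universal property directly on $M$: given a compatible family $h_a \colon X \to M_a'$, it feeds the auxiliary family $G_i \circ h_{\psi(i)}$ into the universal property of $M$ over $\big(M_i,\mu_{ij}\big)$ to produce $h\colon X \to M$, then verifies $\mu_a' \circ h = h_a$ and the uniqueness of $h$; note that this tacitly uses that $G_i \circ h_{\psi(i)}$ is itself a compatible family over $\big(M_i,\mu_{ij}\big)$, a point the paper leaves implicit. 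You instead identify $M$ with the canonical thread-space limit of $\big(M_i,\mu_{ij}\big)$, show that $(\varphi,F_a)$ and $(\psi,G_i)$ induce mutually inverse homeomorphisms $\hat F$ and $\hat G$ between the two canonical limits, and transport the limit property along the homeomorphism $\hat F \circ h$, whose components are exactly the $\mu_a'$. Your route buys explicitness: the inverse is exhibited concretely, and all the index bookkeeping ($\psi(i)\ge i$, $\varphi(\psi(i))\ge i$, the two triangles in \eqref{dia:equiv}) is concentrated in the two composite computations $\hat G \circ \hat F = \id$ and $\hat F \circ \hat G = \id$, which also subsumes the compatibility check the paper glosses over. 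The paper's route never chooses a model for the limit and works with $M$ purely as an abstract limit, at the price of the auxiliary construction of $h$ and a separate uniqueness argument. Both arguments are complete proofs of the proposition.
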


\begin{proof}
 Choose an equivalence transformation of smooth projective systems
 \begin{gather*}
 ( \varphi , F_a )\colon \ ( M_i , \mu_{ij} ) \longrightarrow ( M_a', \mu_{ab}' ),\qquad
 ( \psi , G_i)\colon \ ( M_a', \mu_{ab}' ) \longrightarrow ( M_i , \mu_{ij} ).
 \end{gather*}
Put $\mu_a' := F_a \circ \mu_{\varphi (a)}$. Let us show f\/irst that $M$ together with the family of continuous maps~$\mu_a'$, $a \in \IN$ is a projective limit of $\big( M_a', \mu_{ab}' \big)$. So assume that $X$ is a topological space, and $h_a\colon X \rightarrow M_a'$, $a\in \IN$ a family of continuous maps such that $h_a = \mu_{ab}' \circ h_b$ for $a\leq b$. Since $M$ is a projective limit of $( M_i , \mu_{ij})$, there exists a uniquely determined $h\colon X \rightarrow M$ such that $\mu_i \circ h = G_i \circ h_{\psi (i)}$ for all $i\in \IN$. But then
 \begin{gather*}
 \mu_a' \circ h = F_a\circ \mu_{\varphi (a)} \circ h = F_a \circ G_{\varphi (a)}
 \circ h_{\psi (\varphi (a))} = \mu_{a\psi(\varphi(a))}' \circ h_{\psi (\varphi (a))} = h_a .
 \end{gather*}
 Moreover, if $\widetilde h\colon X \rightarrow M$ is a continuous function such that
 $ \mu_a' \circ \widetilde h = h_a$ for all $a\in \IN$, one computes
 \begin{gather*}
 \mu_i \circ \widetilde h = \mu_{i\varphi (\psi (i))} \circ \mu_{\varphi (\psi (i))} \circ \widetilde h =
 G_i \circ F_{\psi(i)} \circ \mu_{\varphi (\psi (i))} \circ \widetilde h = G_i \circ \mu_{\psi (i)}' \circ
 \widetilde h = G_i \circ h_{\psi (i)}.
 \end{gather*}
 Since $M$ is a projective limit of $ ( M_i , \mu_{ij})$, this entails $\widetilde h =h$. This proves that $M$ is a projective limit of $( M_a', \mu_{ab}')$.

Next let us show that~\ref{Ite:StrucSheaf} holds true with the $\mu_i$ replaced by the $\mu_a'$. So let $U\subset M$ be open, $f \in \calC^\infty_M(M)$, and $p\in U$. Choose $i\in \IN$ such that there is an open $U_i \subset M_i$ and a smooth $f_i\colon U_i \rightarrow \IR$ with $p\in \mu_i^{-1} (U_i) \subset U$ and $f_{|\mu_i^{-1} (U_i)} = f_i \circ {\mu_i}_{|\mu_i^{-1} (U_i)}$. Put $a:=\psi (i)$, $V_a := G_i^{-1} (U_i)$, and def\/ine $\widetilde f_a \colon V_a \rightarrow \IR$ by $\widetilde f_a := f_i \circ {G_i}_{|V_a}$. Then $\widetilde f_a$ is smooth, and
 \begin{align*}
 \widetilde f_a \circ {\mu_a'}_{|{\mu_a'}^{-1} (V_a)} & = f_i \circ G_i \circ
 F_{\psi(i)} \circ {\mu_{\varphi(\psi(i))}}_{|{\mu_a'}^{-1} (V_a)}\\
 & = f_i \circ \mu_{i\varphi(\psi(i))} \circ {\mu_{\varphi(\psi(i))}}_{|{\mu_a'}^{-1}
 (V_a)} = f_i \circ {\mu_i}_{|{\mu_a'}^{-1} (V_a)} = f_{|{\mu_a'}^{-1} (V_a)},
 \end{align*}
 where we have used that ${\mu_a'}^{-1} (V_a) = \mu_i^{-1} (U_i)$. Similarly one shows that a continuous $\widetilde f\colon U \rightarrow \IR$ is an element of $\calC^\infty_M(U)$, if for every $p\in U$ there is an $a\in \IN$, an open $V_a\subset M_a'$, and a smooth function $\widetilde f_a \colon V_a \rightarrow \IR$ such that $p\in {\mu_a'}^{-1} (V_a) \subset U$ and $\widetilde f_a \circ {\mu_a'}_{|{\mu_a'}^{-1} (V_a)} = \widetilde f_{|{\mu_a'}^{-1} (V_a)}$.

 Finally, it remains to prove that $\mu_i = G_i \circ \mu_{\psi (i)}'$ for all $i\in \IN$, but this follows from
 \begin{gather*}
 G_i \circ \mu_{\psi (i)}' = G_i \circ F_{\psi(i)} \circ \mu_{\varphi (\psi(i)))} = \mu_{i \varphi (\psi(i)))} \circ \mu_{\varphi (\psi(i)))} = \mu_i .
 \end{gather*}
 This f\/inishes the proof.
\end{proof}

\begin{Remark} The preceding proposition entails that the structure sheaf of a commutative locally $\IR$-ringed space $( M,\calC^\infty_M)$ for which a smooth projective representation $( M_i , \mu_{ij} , \mu_i)$ exists depends only on the equivalence class $[( M_i , \mu_{ij} , \mu_i )]$.
\end{Remark}

The latter remark justif\/ies the following def\/inition:

\begin{Definition}\quad
\begin{enumerate}\itemsep=0pt
\item[a)] By a \emph{profinite dimensional manifold} we understand a commutative locally $\IR$-ringed space $(M,\calC^\infty_M)$ together with a pfd structure def\/ined on it. The prof\/inite dimensional manifold $(M,\calC^\infty_M)$ is called \emph{regular}, if there exists a~regular smooth representation within the pfd structure on $(M,\calC^\infty_M)$.
\item[b)]
 Assume that $(M,\calC^\infty_M)$ and $(N,\calC^\infty_N)$ are prof\/inite dimensional manifolds. Then a continuous map
 $f\colon M \rightarrow N$ is said to be \emph{smooth}, if the following condition holds true:
 \begin{itemize}\itemsep=0pt
 \item[]
 for every open $U\subset N$, and $g \in \calC^\infty_N(U)$ one has
 \begin{gather*}
 g \circ f_{|f^{-1} (U)} \in \calC^\infty_M\big( f^{-1} (U) \big) .
 \end{gather*}
 \end{itemize}
\end{enumerate}
\end{Definition}

By def\/inition, it is clear that the composition of smooth maps between prof\/inite dimensional manifolds is smooth, hence prof\/inite dimensional manifolds and the smooth maps between them as morphisms form a category, the isomorphisms of which can be safely called \emph{diffeomorphisms}. All of this terminology is justif\/ied by the simple observation Example~\ref{Ex:pfdmfd}(a) below.

\begin{Example}\label{Ex:pfdmfd}\quad
\begin{enumerate}\itemsep=0pt
\item[a)] 
Given a manifold $M$, the constant smooth projective system $( M,\id_M)$ def\/ines a smooth projective representation for the ringed space $(M,\calC^\infty_M)$. Hence, every manifold is a~pro\-f\/i\-ni\-te dimensional manifold in a natural way, and the category of manifolds a full subcategory of the category of prof\/inite dimensional manifolds.

\item[b)] 
Assume that $( M_i , \mu_{ij})$ is a smooth projective system. Let
\begin{gather*}
 M:= \lim\limits_{\longleftarrow \atop i \in \IN} M_i
\end{gather*}
together with the natural projections $\mu_i \colon M\rightarrow M_i$ denote the canonical projective limit of $\big( M_i , \mu_{ij} \big)$. Then, \ref{It:ProjLim} is fulf\/illed by assumption, and it is immediate that $M$ carries a~uniquely determined structure sheaf $\calC^\infty_M$ which satisf\/ies \ref{Ite:StrucSheaf}. The locally ringed space $(M,\calC^\infty_M)$ together with the pfd structure $[( M_i , \mu_{ij} , \mu_i)]$ then is a prof\/inite dimensional manifold. This prof\/inite dimensional manifold is even a projective limit of the projective system $( M_i , \mu_{ij})$ within the category of prof\/inite dimensional manifolds. We therefore write in this situation
 \begin{gather*}
 \big(M, \calC^\infty_M\big) =\lim \limits_{\longleftarrow \atop i\in \IN } \big(M_i,\calC^\infty_{M_i}\big)
\end{gather*}
and call $(M, \calC^\infty_M)$ (together with $[( M_i , \mu_{ij} , \mu_i)]$) \emph{the canonical smooth projective limit} of $( M_i , \mu_{ij})$.

\item[c)]
A locally compact Hausdorf\/f topological group $\mathsf{G}$ is called \emph{Lie projective}, if every neighbourhood of the identity contains a compact Lie normal subgroup, i.e., a normal subgroup $N \subset G$ such that $G/N$ is a~Lie group. One has the following structure theorem \cite[Theorem~4.4]{BicLPG}, \cite{hof}. A~locally compact metrizable group $\mathsf{G}$ is Lie projective, if and only if there is a smooth projective system $(\mathsf{G}_i,\eta_{ij})$ as in Example~\ref{Ex:sps}(c) together with continuous group homomorphisms $\eta_i\colon \mathsf{G}\to \mathsf{G}_i$, $i\in\IN$ such that $(\mathsf{G},\eta_i)$ is a projective limit of $(\mathsf{G}_i,\eta_{ij})$. Again, it follows that $\mathsf{G}$ carries a uniquely determined structure sheaf $\calC^\infty_{\mathsf{G}}$ satisfying \ref{Ite:StrucSheaf}. The locally ringed space $(\mathsf{G},\calC^\infty_{\mathsf{G}})$ together with the pfd structure $[( \mathsf{G}_i , \eta_{ij} , \eta_i)]$ becomes a~re\-gular prof\/inite dimensional manifold with a group structure such that all of its structure maps are smooth.

\item[d)] The space of inf\/inite jets over a f\/iber bundle canonically is a prof\/inite dimensional manifold (see Section~\ref{inf}).
\end{enumerate}
\end{Example}

\begin{Remark}\quad
\begin{enumerate}\itemsep=0pt
\item[a)] In the sequel, $(M,\calC^\infty_M)$ or brief\/ly $M$ will always denote a prof\/inite dimensional manifold. Moreover, $( M_i , \mu_{ij} , \mu_i)$ always stands for a smooth projective representation def\/ining the pfd structure on $M$. The sheaf of smooth functions on a~prof\/inite dimensional manifold will often brief\/ly be denoted by~$\calC^\infty$, if no confusion can arise.
\item[b)] One of the intentions when constructing the category of prof\/inite dimensional manifolds was that it should be a category with projective limits which extends the one of smooth manifolds and that it is minimal in a certain sense with respect to these properties. The category of prof\/inite dimensional manifolds fulf\/ills these requirements. That it extends the category of manifolds follows from Example~\ref{Ex:pfdmfd}(a). By a~straightforward argument using the `diagonal trick' for doubly projective limits one concludes that the category of prof\/inite dimensional manifolds contains all projective limits. The minimality requirement is a direct consequence of the def\/inition of prof\/inite dimensional manifolds as abstract projective limits of manifolds.
\item[c)] The prof\/inite dimensional manifolds def\/ined in this paper coincide with the projective limits of manifolds from~\cite{AbbManDSPLM}, but are in general not plb-manifolds in the sense of \cite[Def\/inition~3.1.2]{DodGalVasGFC}. The latter have the property that they can be modelled locally on Fr\'echet spaces representable as projective limits of Banach spaces. A prof\/inite dimensional manifold of inf\/inite dimension, though, can in general not locally be modelled by open subsets of $\IR^\infty$. In particular when the underlying prof\/inite dimensional manifold is given as the manifold of formal solutions of a formally integrable PDE in the sense of Proposition and Def\/inition \ref{defS} corresponding local charts with values $\IR^\infty$ appear to exist only in particular cases. A more detailed study of this phenomenon is left for future work.
\end{enumerate}
\end{Remark}

Let $N\subset M$ be a subset, and assume further that for some smooth projective representation $( M_i , \mu_{ij} , \mu_i)$ of the pfd structure on $M$ the following holds true:
 \begin{enumerate}[itemindent=0mm,leftmargin=4.5em,labelwidth=0mm,labelsep=2mm,align=right,label={\rm (PFSM\arabic*)}]
 \item \label{axiomsubmfd}
 There is a stricly increasing sequence $(l_i)_{i \in \IN}$ such that for every $i\in \IN$ the set $N_i:= \mu_{l_i}(N)$ is a submanifold of $M_{l_i}$.
 \item \label{axiomcap}
 One has $N=\bigcap\limits_{i\in \IN} \mu_{l_i}^{-1}(N_i)$.
 \item \label{axiomsubmers}
 The induced map
 \begin{gather*}
 \nu_{ij}:= {\mu_{l_i l_j}}_{|N_j} \colon \ N_j \longrightarrow N_i
 \end{gather*}
 is a submersion for all $i,j\in\IN$ with $j\geq i$.
 \end{enumerate}
Observe that the $\nu_{ij}$ are surjective by def\/inition of the manifolds $N_i$ and by $\nu_i = \nu_{ij} \circ \nu_j$, where we have put $\nu_i := {\mu_{l_i}'}_{|N}$. In particular, $(N_i,\nu_{ij})$ becomes a smooth projective system.

\begin{Propanddef}Let $N \subset M$ be a subset such that for some smooth projective representation $(M_i,\mu_{ij},\mu_i)$ of the pfd structure on $M$ the axioms~\ref{axiomsubmfd} to~\ref{axiomsubmers} are fulfilled. Then $N$ carries in a natural way the structure of a profinite dimensional manifold such that its sheaf of smooth functions coincides with the sheaf $\calC^\infty_{|N}$ of continuous functions on open subset of $N$ which are locally restrictions of smooth functions on $M$. A smooth projective representation of $N$ defining its natural pfd structure is given by the family $(N_i,\nu_{ij},\nu_i)$. From now on, such a subset $N \subset M$ will be called a~\emph{prof\/inite dimensional submanifold of}~$M$, and $(M_i,\mu_{ij},\mu_i)$ a~smooth projective representation of $M$ \emph{inducing the submanifold structure on}~$N$.
\end{Propanddef}

\begin{proof} We f\/irst show that $N$ together with the maps $\nu_i$ is a (topological) projective limit of the projective system $(N_i,\nu_{ij})$. Let $p_i \in N_i$, $i\in\IN$ such that $\nu_{ij} (p_j) =p_i$ for all $j\geq i$. Since $M$ together with the $\mu_{i}$ is a projective limit of $(M_i,\mu_{ij})$, there exists an $p\in M$ such that $\mu_{l_i} (p) =p_i$ for all $i \in\IN$. By axiom \ref{axiomcap}, $p \in N$, hence one concludes that $N$ is a projective limit of the manifolds $N_i$.

Next, we show that $\calC^\infty_{|N}$ coincides with the uniquely determined sheaf $\calC^\infty_N$ satisfying axiom \ref{Ite:StrucSheaf}. Since the canonical embeddings $N_i \hookrightarrow M_{l_i}$ are smooth by~\ref{axiomsubmfd}, the embedding $N \hookrightarrow M$ is smooth as well, and $\calC^\infty_{|N}$ is a subsheaf of the sheaf $\calC^\infty_N$. It remains to prove that for every open $V\subset N$ a function $f\in \calC^\infty_N (V)$ is locally the restriction of a smooth function on~$M$. To show this let $p\in V$ and $V_i$ an open subset of some $N_i$ such that $p\in \nu_i^{-1} (V_i) \subset V$, and such that there is an $f_i \in \calC^\infty (V_i)$ with $f_{|\nu_i^{-1} (V_i)} = f_i \circ {\nu_i}_{| \nu_i^{-1} (V_i)}$. Since $N_i$ is locally closed in~$M_{l_i}$, we can assume after possibly shrinking $V_i$ that there is an open $U_i \subset M_{l_i}$ with $V_i = N_i \cap U_i$ and such that $N_i \cap U_i$ is closed in $U_i$. Then there exists ${F_i}\in \calC^\infty (U_i)$ such that ${F_i}_{|V_i} = f_i$. Put $F := F_i \circ {\mu_{l_i}}_{|\mu_{l_i}^{-1} (U_i)}$. Then $F \in \garb(\mu_{l_i}^{-1} (U_i))$, and
\begin{gather*}
 f_{|\nu_i^{-1} (V_i)} = F_{|\nu_i^{-1} (V_i)},
\end{gather*}
which proves that $f \in \calC^\infty_{|N} (V)$. The claim follows.
\end{proof}

\begin{Example}\quad
\begin{enumerate}\itemsep=0pt
\item[a)] Every open subset $U$ of $M$ is naturally a prof\/inite dimensional submanifold since for each $i\in\IN$ the set $U_i := \mu_i (U)$ is an open submanifold of $M_i$.
\item[b)] Consider the prof\/inite dimensional manifold
 \begin{gather*}
 \big( \IR^\infty , \calC^\infty_{\IR^\infty}\big) := \lim\limits_{\longleftarrow \atop n \in \IN} \big( \IR^n ,\calC^\infty_{\IR^n}\big),
 \end{gather*}
 and let $\mathrm{B}^n (0)$ be the open unit ball in $\IR^n$. The projective limit
 \begin{gather*}
 \big( \mathrm{B}^\infty (0) , \calC^\infty_{\mathrm{B}^\infty (0)}\big) :=
 \lim \limits_{\longleftarrow \atop n \in \IN} \big( \mathrm{B}^n (0), \calC^\infty_{\mathrm{B}^n (0)} \big)
 \end{gather*}
 then becomes a prof\/inite dimensional submanifold of $\IR^\infty$. Note that it is not locally closed in~$\IR^\infty$.
\item[c)] The space of formal solutions of a formally integrable partial dif\/ferential equation is a~pro\-f\/inite dimensional submanifold of the space of inf\/inite jets over the underlying f\/iber bundle (see Section~\ref{inf}).
\end{enumerate}
\end{Example}

We continue with:

\begin{Definition} Let $U \subset M$ be open. A smooth function $f \in \calC^\infty (U)$ then is called \emph{local}, if there is an open $U_i \subset M_i$ for some $i\in \IN$ and a function $f_i \in \calC^\infty (U_i)$ such that $U \subset \mu_i^{-1} (U_i)$ and $f = f_i \circ {\mu_i}_{|U}$. We denote the space of local functions over~$U$ by~$\locC(U)$.
\end{Definition}

\begin{Remark}\quad
\begin{enumerate}\itemsep=0pt
\item[a)] Observe that $\locC$ forms a presheaf on $M$, which depends only on the pfd structure $[( M_i , \mu_{ij} , \mu_i)]$. Moreover, it is clear by construction that for every open $U\subset M$ and every representative $(M_i,\mu_{ij},\mu_i)$ of the pfd structure, $\locC (U)$ together with the family of pull-back maps $\mu_i^* \colon \calC^\infty (\mu_i (U)) \rightarrow \locC (U)$ is an inductive limit of the injective system of linear spaces $( \calC^\infty (\mu_i (U)) , \mu_{ij}^*)_{i\in \IN}$.
\item[b)] $\locC$ is in general not a sheaf unless $M$ is a f\/inite dimensional manifold. The sheaf associated to $\locC$ naturally coincides with $\garb$ since locally, every smooth function is local.
\item[c)] By naming sections of $\locC$ local functions we essentially follow Stashef\/f \cite[Def\/inition~1.1]{StaSHABVA} and Barnich \cite[Def\/inition~1.1]{glenn1}, where the authors consider jet bundles. Note that in \cite{AbbManDSPLM}, local functions are called cylindrical functions.
\item[d)] The representative $\mathcal{M}:=(M_i,\mu_{ij},\mu_j)$ leads to a particular f\/iltration $\mathcal{F}^{\mathcal{M}}_\bullet$ of the presheaf of local functions by putting, for $l\in \IN$,
\begin{gather*}
 \mathcal{F}^{\mathcal{M}}_l\big(\locC \big):= \mu_l^* \calC^\infty_{M_l} .
\end{gather*}
Observe that this f\/iltration has the property that
\begin{gather*}
 \locC =\bigcup_{l\in\IN}\mathcal{F}^{\mathcal{M}}_l\big(\locC\big) .
\end{gather*}
\end{enumerate}
\end{Remark}

\subsection{Tangent bundles and vector f\/ields}

The tangent space at a point of a f\/inite dimensional manifold can be def\/ined as a~set of equivalence classes of germs of smooth paths at that point or as the space of derivations on the stalk of the sheaf of smooth functions at that point. The def\/inition via paths can not be immediately carried over to the prof\/inite dimensional case, so we use the derivation approach.

\begin{Definition}
 Given a point $p$ of the prof\/inite dimensional manifold $M$, the \emph{tangent space} of~$M$ at~$p$ is def\/ined as the space of derivations on $\calC^\infty_p$, the stalk of smooth functions at $p$, i.e., as the space
 \begin{gather*}
 \IT_p M := \operatorname{Der} \big( \calC^\infty_p , \IR \big) .
 \end{gather*}
 Elements of $\IT_pM$ will be called \emph{tangent vectors} of $M$ at $p$. The \emph{tangent bundle} of $M$ is the disjoint union
 \begin{gather*}
 \IT M:=\bigcup_{p\in M} \IT_p M ,
 \end{gather*}
 and
 \begin{gather*}
 \pi_{\IT M}\colon \ \IT M \longrightarrow M, \qquad \IT_pM \ni Y \longmapsto p
 \end{gather*}
 the \emph{canonical projection}.
\end{Definition}

Note that for every $i\in \IN$ there is a canonical map $\IT \mu_i \colon \IT M \rightarrow \IT M_i$ which maps a tangent vector $Y \in \IT_pM$ to the tangent vector
\begin{gather*}
 Y_i \colon \ \calC^\infty_{M_i, p_i} \rightarrow \IR, \qquad [f_i]_{p_i} \mapsto Y\big( [f_i \circ \mu_i]_p \big) , \qquad \text{where} \quad p_i := \mu_i(p).
\end{gather*}
By construction, one has $\IT \mu_{ij} \circ\IT \mu_j = \IT \mu_i $ for $i\leq j$. We give $\IT M$ the coarsest topology such that all the maps $\IT \mu_i$, $i\in \IN$ are continuous. Now we record the following observation:

\begin{Lemma}\label{tangi}
The topological space $\IT M$ together with the maps $\IT \mu_i$ is a projective limit of the projective system $( \IT M_i, \IT \mu_{ij})$.
\end{Lemma}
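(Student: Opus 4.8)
The plan is to identify $\IT M$, via the canonical comparison map, with the canonical topological projective limit of $\big(\IT M_i,\IT\mu_{ij}\big)$, so that the whole statement reduces to a bijectivity claim plus a purely formal argument about the topology. Let $L$ denote the thread space $\{(Y_i)_{i\in\IN}\in\prod_i\IT M_i\mid \IT\mu_{ij}(Y_j)=Y_i \text{ for } i\leq j\}$ with its projections $\mathrm{pr}_i\colon L\to\IT M_i$, and define $\Phi\colon \IT M\to L$ by $\Phi(Y):=\big(\IT\mu_i(Y)\big)_{i\in\IN}$. This is well defined since $\IT\mu_{ij}\circ\IT\mu_j=\IT\mu_i$, and it satisfies $\mathrm{pr}_i\circ\Phi=\IT\mu_i$. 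Because the topology on $\IT M$ is by definition the coarsest one making all $\IT\mu_i$ continuous, the subbasic open sets $(\IT\mu_i)^{-1}(W)=\Phi^{-1}\big(\mathrm{pr}_i^{-1}(W)\cap L\big)$ match exactly the subbasic open sets of $L$; hence once $\Phi$ is a bijection it is automatically a homeomorphism, and $\IT M$ with the maps $\IT\mu_i$ inherits, by transport along $\Phi$, the universal property of $L$. So everything rests on showing $\Phi$ is bijective.

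The decisive input is the description of the stalk $\calC^\infty_{M,p}$. Writing $p_i:=\mu_i(p)$, axiom \ref{Ite:StrucSheaf} shows that every germ at $p$ is of the form $\mu_i^*[f_i]_{p_i}=[f_i\circ\mu_i]_p$ for some $i$ and $[f_i]_{p_i}\in\calC^\infty_{M_i,p_i}$, and the Remark that $\locC$ is an injective limit of the $\calC^\infty(\mu_i(\,\cdot\,))$ along the pullbacks $\mu_{ij}^*$, with $\calC^\infty$ as its associated sheaf, yields after passage to stalks that $\calC^\infty_{M,p}=\varinjlim_{i\in\IN}\calC^\infty_{M_i,p_i}$ along the $\mu_{ij}^*$, with structure maps $\mu_i^*$. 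I will only use two consequences: (A) the maps $\mu_i^*$ are jointly surjective onto $\calC^\infty_{M,p}$; and (B) if $\mu_i^*[f_i]_{p_i}=\mu_j^*[g_j]_{p_j}$ in $\calC^\infty_{M,p}$, then already $\mu_{ik}^*[f_i]_{p_k}=\mu_{jk}^*[g_j]_{p_k}$ in $\calC^\infty_{M_k,p_k}$ for some $k\geq i,j$.

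For injectivity, note that a thread $(Y_i)\in L$ determines base points $p_i:=\pi_{\IT M_i}(Y_i)$ with $\mu_{ij}(p_j)=p_i$, so by \ref{It:ProjLim} there is a unique $p\in M$ with $\mu_i(p)=p_i$; thus two tangent vectors with the same image share a base point, and if $\IT\mu_i(Y)=\IT\mu_i(Y')$ for all $i$, then (A) and $(\IT\mu_iY)([f_i]_{p_i})=Y(\mu_i^*[f_i]_{p_i})$ force $Y=Y'$. For surjectivity, given $(Y_i)\in L$ with associated point $p$, I set $Y(\mu_i^*[f_i]_{p_i}):=Y_i([f_i]_{p_i})$; this is well defined by (B) together with the compatibility $Y_i=Y_k\circ\mu_{ik}^*$, since at a common level $k$ one gets $Y_i([f_i])=Y_k(\mu_{ik}^*[f_i])=Y_k(\mu_{jk}^*[g_j])=Y_j([g_j])$. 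Representing any two germs at a common index $k$ (possible since $\IN$ is directed, using (A) and $\mu_k^*\circ\mu_{ik}^*=\mu_i^*$), linearity and Leibniz follow: for $g=\mu_k^*a$, $h=\mu_k^*b$ one has $gh=\mu_k^*(ab)$ and $Y(gh)=Y_k(ab)=Y_k(a)\,b(p_k)+a(p_k)\,Y_k(b)=Y(g)\,h(p)+g(p)\,Y(h)$, because $(\mu_k^*a)(p)=a(p_k)=g(p)$. Hence $Y\in\IT_pM$ with $\Phi(Y)=(Y_i)$.

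I expect the main obstacle to be the rigorous identification $\calC^\infty_{M,p}=\varinjlim_{i\in\IN}\calC^\infty_{M_i,p_i}$, in particular property (B): that two germs agreeing on $M$ must already agree after pullback to some finite level $M_k$. This genuinely goes beyond \ref{Ite:StrucSheaf} (which only gives the joint surjectivity (A)) and relies on the injective-limit description of the presheaf of local functions recorded earlier, the interchange of filtered colimits, and the fact that a presheaf and its sheafification have the same stalks. Everything else — the base-point bookkeeping via \ref{It:ProjLim}, the Leibniz verification, and the formal topological transport of the universal property along $\Phi$ — is routine.
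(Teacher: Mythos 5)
Your proof is correct, and its mathematical core coincides with the paper's: in both arguments everything reduces to manufacturing a derivation on $\calC^\infty_p$ out of a compatible thread $(Y_i)_{i\in\IN}$ of tangent vectors, using that every germ at $p$ is a pullback $\mu_i^*[f_i]_{p_i}$ from a finite level (your (A)) and that equality of two such pullbacks descends to a finite level (your (B)). The differences are in the packaging and in where (B) comes from. The paper verifies the universal property directly against an arbitrary test space $X$: from compatible maps $\Phi_i\colon X\to\IT M_i$ it builds $\Phi(x)$ out of the thread $(\Phi_i(x))_{i\in\IN}$ — exactly your surjectivity argument — then proves uniqueness (your injectivity) and finally checks continuity of $\Phi$ by hand via the initial topology; you instead show that the comparison map $\IT M\to L$ onto the canonical thread space is a homeomorphism and transport the universal property from $L$, which renders all continuity statements automatic. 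As for (B), the paper simply asserts the corresponding step (that $[f_i\circ\mu_i]_p=[f_j'\circ\mu_j]_p$ forces $f_i\circ\mu_{ij}=f_j'$ near $p_j$) without justification, whereas you derive it from the injective-limit Remark through the stalk identity $\calC^\infty_{M,p}=\varinjlim_{i}\calC^\infty_{M_i,p_i}$; that is a cleaner bookkeeping of the same fact. Do note that in either form this step silently uses surjectivity of the projections $\mu_k\colon M\to M_k$ — true here because the index set is $\IN$ and the $\mu_{ij}$ are surjective, so every point of $M_k$ extends to a thread — since otherwise one could neither cancel $\mu_k$ from $(f_i\circ\mu_{ik})\circ\mu_k=(f_j'\circ\mu_{jk})\circ\mu_k$ nor conclude that the sets $\mu_k(U)$ are open and cofinal among neighborhoods of $p_k$; making this explicit would close the one small gap that your write-up and the paper's proof share.
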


\begin{proof}
Assume that $X$ is a topological space, and $\big( \Phi_i \big)_{i\in \IN}$ a family of continuous maps $\Phi_i \colon X \rightarrow \IT M_i$ such that $\IT \mu_{ij} \circ \Phi_j = \Phi_i $ for all $i\leq j$. Since $M$ is a projective limit of the projective system $\big( M_i , \mu_{ij} \big)$, there exists a~uniquely determined continuous map $ \varphi \colon X \rightarrow M$ such that $ \pi_{\IT M_i} \circ \Phi_i = \mu_i \circ \varphi$ for all $i\in \IN$. Now let $x\in X$, and put $p:= \varphi (x)$ and $p_i := \mu_i(p)$. Then, for every $i\in \IN$, $\Phi_i (x)$ is a tangent vector of $M_i$ with footpoint $p_i$.
We now construct a derivation $\Phi(x) \in \operatorname{Der}( \calC^\infty_p , \IR)$. Let $[f]_p \in \calC^\infty_p$, i.e., let $f$ be a smooth function def\/ined on a neighborhood~$U$ of~$p$, and $[f]_p$ its germ at~$p$. Then there exists $i\in \IN$, an open neighborhood $U_i \subset M_i$ of~$p_i$ and a smooth function $f_i \colon U_i \rightarrow \IR$ such that
\begin{gather*}
 \mu_i^{-1} (U_i) \subset U \qquad \text{and} \qquad f_{|\mu_i^{-1} (U_i)} = f_i \circ {\mu_i}_{|\mu_i^{-1} (U_i)}.
\end{gather*}
We now put
\begin{gather*} 
 \Phi(x)\big( [f]_p \big) := \Phi_i (x) \big( [f_i]_{p_i} \big), \qquad \text{where} \quad p_i := \mu_i (p).
\end{gather*}
We have to show that $\Phi(x)$ is independent of the choices made, and that it is a derivation indeed. So let $f' \colon U' \rightarrow \IR$ be another smooth function def\/ining the germ $[f]_p$. Choose $j\in \IN$, an open neighborhood $U_j' \subset M_j$ of $p_j$, and a smooth function $f_j' \colon U_j' \rightarrow \IR$ such that
\begin{gather*}
 \mu_j^{-1} (U_j') \subset U' \qquad \text{and} \qquad f_{|\mu_j^{-1} (U_j')} = f_j' \circ {\mu_j}_{|\mu_j^{-1} (U_j')}.
\end{gather*}
Without loss of generality, we can assume $i\leq j$. By assumption $[f]_p = [f']_p$, hence one concludes that
\begin{gather*}
 f_i\circ {\mu_{ij}}_{|V_j} = {f_j'}_{|V_j}
\end{gather*}
for some open neighborhood $V_j \subset M_j$ of $p_j := \mu_j (p)$. But this implies, using the assumption on the $\Phi_i$ that
\begin{gather*}
 \Phi_j (x) \big( [f_j']_{p_j} \big) = \IT \mu_{ij} \Phi_j (x) \big( [f_i]_{p_i} \big) = \Phi_i (x) \big( [f_i]_{p_i} \big) .
\end{gather*}
Hence, $ \Phi(x)$ is well-def\/ined, indeed.

Next, we show that $ \Phi(x)$ is a derivation. So let $[f]_p, [g]_p \in \calC^\infty_p$ be two germs of smooth functions at $p$. Then, after possibly shrinking the domains of~$f$ and~$g$, one can f\/ind an $i\in \IN$, an open neighborhood $U_i\subset M_i$ of $p_i$, and $f_i,g_i \in \calC^\infty (U_i)$ such that
\begin{gather*}
 f_{|\mu_i^{-1} (U_i)} = f_i \circ{\mu_i}_{|\mu_i^{-1} (U_i)} \qquad \text{and} \qquad g_{|\mu_i^{-1} (U_i)} = g_i \circ{\mu_i}_{|\mu_i^{-1} (U_i)} .
\end{gather*}
Since $\Phi_i (x)$ acts as a derviation on $\calC^\infty_{p_i}$, one checks
\begin{align*}
 \Phi(x) \big( [f]_p [g]_p \big) & = \Phi_i(x) \big( [f_i]_{p_i} [g_i]_{p_i} \big) = f_i(p_i) \Phi_i(x) \big( [g_i]_{p_i} \big) + g_i(p_i) \Phi_i(x) \big( [f_i]_{p_i} \big) \\
 & = f(p) \Phi(x) \big( [g]_p \big)+ g(p) \Phi(x) \big( [f]_p \big),
\end{align*}
which means that $\Phi(x)$ is a derivation.

By construction, it is clear that
\begin{gather*}
 \IT\mu_i \Phi (x) = \Phi_i (x) \qquad \text{for all} \quad i\in \IN.
\end{gather*}
Let us verify that $\Phi(x)$ is uniquely determined by this property. So assume that $\Phi'(x)$ is another element of $\IT_pM$ such that $\IT\mu_i \Phi' (x) = \Phi_i (x)$ for all $i\in \IN$. For $[f]_p \in \calC^\infty_p$ of the form $f = f_i \circ {\mu_i}_{|\mu_i^{-1} (U_i)}$ with $U_i\subset M_i$ an open neighborhood of $p_i$ and $f_i \in \calC^\infty (U_i)$ this assumption entails
\begin{gather*}
 \Phi (x) \big( [f]_p \big) = \Phi_i (x) \big( [f_i]_{p_i} \big) = \Phi' (x) \big( [f]_p \big) .
\end{gather*}
Since every germ $[f]_p$ is locally of the form $f_i \circ {\mu_i}_{|\mu_i^{-1} (U_i)}$, we obtain $\Phi(x)=\Phi'(x)$.

Finally, we observe that $\Phi \colon X \rightarrow \IT M $ is continuous, since all maps $\Phi_i = \IT \mu_i \Phi$ are continuous, and $\IT M$ carries the initial topology with respect to the maps $\IT \mu_i$.

This concludes the proof that $\IT M$ together with the maps $\IT \mu_i$ is a projective limit of the projective system $( \IT M_i , \IT \mu_{ij})$.
\end{proof}

\begin{Remark}\quad{\samepage
\begin{enumerate}\itemsep=0pt
\item[a)] If $p\in M$, $Y_p, Z_p \in \IT_p M$, and $\lambda \in \IR$, then the maps $Y_p+Z_p \colon \calC^\infty_p \rightarrow \IR$ and $\lambda Y_p \colon \calC^\infty_p \rightarrow \IR$ are derivations again. Hence $\IT_pM$ becomes a topological vector space in a natural way and one has $\IT_pM\cong \lim\limits_{\longleftarrow \atop i \in \IN} \IT_{\mu_i(p)}M_i$ canonically as topological vector spaces. In particular, this implies that $\pi_{\IT M} \colon \IT M \rightarrow M$ is a continuous family of vector spaces. Note that this family need not be locally trivial, in general.

\item[b)] Denote by $\mathscr{P}^\infty_{M,p}$ the set of germs of smooth paths $\gamma \colon (\IR,0) \rightarrow (M,p)$. There is a canonical map $\mathscr{P}^\infty_{M,p} \rightarrow \IT_pM$ which associates to each germ of a smooth path $\gamma \colon (\IR,0) \rightarrow (M,p)$ the derivation
 \begin{gather*}
 \dot{\gamma}\colon \ \calC^\infty_p \longrightarrow \IR, \qquad [f]_p \longmapsto (f\circ \gamma){\dot{\mbox{\hspace{1mm}}}\,}(0) .
 \end{gather*}
 Unlike in the f\/inite dimensional case, this map need not be surjective, in general, as Example~\ref{Ex:DerNoPath} below shows. But note the following result.
\end{enumerate}}
\end{Remark}

\begin{Proposition}
 In case the profinite dimensional manifold $M$ is regular, the ``dot map''
 \begin{gather*}
 \mathscr{P}^\infty_{M,p} \longrightarrow \IT_pM, \qquad [\gamma]_0 \longmapsto \dot{\gamma} (0)
 \end{gather*}
 is surjective for every $p\in M$.
\end{Proposition}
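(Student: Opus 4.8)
The plan is to reduce the statement to the finite-dimensional case using regularity, where the essential tool is that a regular smooth projective representation has each $\mu_{ij} : M_j \to M_i$ a \emph{fiber bundle}, hence each $\mu_i : M \to M_i$ admits local sections. I would fix a point $p \in M$ and a tangent vector $Y \in \IT_p M$, and exploit the canonical isomorphism $\IT_p M \cong \lim_{\longleftarrow} \IT_{\mu_i(p)} M_i$ recorded in the preceding remark. Writing $p_i := \mu_i(p)$ and $Y_i := \IT\mu_i(Y) \in \IT_{p_i} M_i$, the family $(Y_i)_{i \in \IN}$ is a coherent thread, i.e.\ $\IT\mu_{ij}(Y_j) = Y_i$ for $i \leq j$. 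The goal is to build a single germ of a smooth path $\gamma : (\IR,0) \to (M,p)$ whose velocity $\dot\gamma(0)$ reproduces all the $Y_i$ simultaneously, which by the isomorphism above forces $\dot\gamma(0) = Y$.

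The construction I would carry out proceeds by building compatible paths level by level and then splicing them. Using regularity, I would choose, over a neighborhood of $p_i$ in $M_i$, local trivializations of the bundles $\mu_{ij}$ so that near $p$ the projective limit looks locally like a product $M_{i_0} \times \prod_{k} \jetfib_k$ of the base with the bundle fibers $\jetfib_k := \mu_{k,k+1}^{-1}(\text{pt})$. In such a local product chart a tangent vector $Y$ at $p$ decomposes into a base component and a sequence of fiber components, and I would simply write down the path componentwise: take a smooth curve in $M_{i_0}$ realizing the base part of $Y_{i_0}$, and in each fiber factor take the straight line (in the trivializing chart) with the prescribed velocity. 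The coherence $\IT\mu_{ij}(Y_j) = Y_i$ guarantees that these componentwise velocities are mutually consistent, so the resulting map into the local product lands in $M$ and defines a continuous, indeed ``smooth in each coordinate,'' path $\gamma$ with $\mu_i \circ \gamma$ smooth for every $i$; by \ref{Ite:StrucSheaf} this makes $\gamma$ a smooth path into $M$. Finally one checks $\dot\gamma(0)(\,[f]_p) = \dot\gamma(0)([f_i \circ \mu_i]_p) = Y_i([f_i]_{p_i}) = Y([f]_p)$ for every local representative, so $\dot\gamma(0) = Y$.

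The main obstacle I anticipate is precisely producing \emph{one} path that is smooth as a map into $M$ rather than a compatible sequence of paths smooth only at each finite level; this is exactly where regularity is indispensable, since without the fiber-bundle property of the $\mu_{ij}$ one cannot choose coherent local trivializations and the velocity data, though coherent, may fail to integrate to an honest smooth curve (this is the general non-surjectivity alluded to in the remark). The delicate point within the argument is verifying that the componentwise-defined $\gamma$ is genuinely smooth in the pfd sense, i.e.\ that $g \circ \gamma$ is smooth for every $g \in \calC^\infty_M$ near $p$; here I would invoke that every such $g$ is locally of the form $f_i \circ \mu_i$, reducing smoothness of $g \circ \gamma$ to smoothness of $f_i \circ (\mu_i \circ \gamma)$, which holds because $\mu_i \circ \gamma$ is a smooth curve in the finite-dimensional manifold $M_i$ by construction. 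A careful treatment would also confirm that the local trivializations can be chosen compatibly across the tower, which follows from the transition relation \ref{IteSPS2} together with the bundle structure, so that the splicing of fiber factors is well defined.
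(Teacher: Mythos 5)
Your proposal is correct and follows essentially the same route as the paper's proof: regularity is used to trivialize the tower of fiber bundles $\mu_{i,i+1}$ over nested contractible neighborhoods of the points $p_i=\mu_i(p)$, the tangent vector is split into base and fiber components, a path is built componentwise with the prescribed velocities, and smoothness of the spliced path $\gamma$ (hence $\dot{\gamma}(0)=Y$) follows because each $\mu_i\circ\gamma$ is smooth. The one point you defer --- that the trivializations can be chosen compatibly along the tower --- is exactly what the paper's inductive construction supplies, by shrinking at each stage to a ball-shaped neighborhood $U_{j+1}=\Psi_{j+1}^{-1}(U_j\times D_{j+1})$ over which the next bundle is again trivial.
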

\begin{proof} We start with an auxiliary construction. Choose a smooth projective representation $(M_i,\mu_{ij},\mu_i)$ within the pfd structure on~$M$ such that all $\mu_{ij}$ are f\/iber bundles. Put $p_i:= \mu_i (p)$ for every $i\in \IN$. Then choose a relatively compact open neighborhood $U_0\subset M_0$ of $p_0$ which is dif\/feoemorphic to an open ball in some $\IR^n$. In particular, $U_0$ is contractible, hence the f\/iber bundle ${\mu_{01}}_{|\mu_{01}^{-1} (U_0)} \colon \mu_{01}^{-1} (U_0) \rightarrow U_0$ is trivial with typical f\/iber $F_1 := \mu_{01}^{-1} (p_0)$. Let $\Psi_0 \colon \mu_{01}^{-1} (U_0) \rightarrow U_0 \times F_1$ be a trivialization of that f\/iber bundle, and $D_1\subset F_1$ an open neighborhood of $p_1$ which is dif\/feomeorphic to an open ball in some Euclidean space. Put $U_1 := \Psi_0^{-1} (U_0 \times D_1)$. Then, $U_1$ is dif\/feomeorphic to a ball in some Euclidean space, and ${\mu_{01}}_{|U_1} \colon U_1 \rightarrow U_0$ is a trivial f\/iber bundle with f\/iber $D_1$. Assume now that we have constructed $U_0\subset M_0, \ldots , U_j\subset M_j$ such that for all $i\leq j$ the following holds true:
 \begin{enumerate}\itemsep=0pt
 \item[1)] the set $U_i$ is a relatively compact open neighborhood of $p_i$ dif\/feomorphic to an open ball in some Euclidean space,
 \item[2)] for $i>0$, the identity $\mu_{i-1i} (U_i) = U_{i-1}$ holds true,
 \item[3)] for $i > 0$, the restricted map ${\mu_{i-1i}}_{|U_i} \colon U_i \rightarrow U_{i-1}$ is a trivial f\/iber bundle with f\/iber $D_i$ dif\/feomorphic to an open ball in some Euclidean space.
 \end{enumerate}
 Let us now construct $U_{j+1}$ and $D_{j+1}$. To this end note f\/irst that ${\mu_{jj+1}}_{|\mu_{jj+1}^{-1} (U_j)} \colon \mu_{jj+1}^{-1} (U_j) \rightarrow U_j$ is a trivial f\/iber bundle with typical f\/iber $F_j := \mu_{jj+1}^{-1} (p_j)$, since $U_j$ is contractible. Choose a~trivialization $\Psi_{j+1} \colon {\mu_{jj+1}}_{|\mu_{jj+1}^{-1} (U_j)} \rightarrow U_j \times F_j$, and an open neighborhood $D_{j+1} \subset F_{j+1}$ of $p_{j+1}$ which is dif\/feomorphic to an open ball in some Euclidean space. Put $U_{j+1}:= \Psi_{j+1}^{-1} \big( U_j \times D_{j+1} \big)$. Then, $U_j$ is dif\/feomeorphic to a ball in some
 Euclidean space, and ${\mu_{jj+1}}_{|U_{j+1}} \colon U_{j+1} \rightarrow U_j$ is a~trivial f\/iber bundle with f\/iber $D_{j+1}$. This f\/inishes the induction step, and we obtain $U_i\subset M_i$ and $D_i$ such that the three conditions above are satisf\/ied.

 After these preliminaries, assume that $Z\in T_pM$ is a tangent vector. Let $Z_i := \IT\mu_i (Z)$ for $i\in \IN$. We now inductively construct smooth paths $\gamma_i \colon \IR \rightarrow U_i$ such that
 \begin{gather} \label{Eq:PathProp}
 \gamma_i(0) = p_i, \qquad \dot{\gamma}_i(0) = Z_i, \qquad \text{and, if $i>0$,} \qquad \mu_{i-1i} \circ \gamma_i = \gamma_{i-1}.
 \end{gather}
To start, choose a smooth path $\gamma_0 \colon \IR \rightarrow U_0$ such that $\gamma_0(0) = p_0$, and $\dot{\gamma}_0(0) = Z_0$. Assume that we have constructed $\gamma_0,\ldots,\gamma_j$ such that \eqref{Eq:PathProp} is satisf\/ied for all $i \leq j$. Consider the trivial f\/iber bundle ${\mu_{jj+1}}_{|U_{j+1}} \colon U_{j+1} \rightarrow U_j$, and let $\Psi_{j+1}\colon U_{j+1} \rightarrow U_j \times D_{j+1}$ be a trivialization. Then, $\IT \Psi_{j+1} (Z_{j+1}) = \big( Z_j , Y_{j+1}\big)$ for some tangent vector $Y_{j+1}\in \IT_{p_{j+1}}D_{j+1}$. Choose a smooth path $\varrho_{j+1} \colon \IR \rightarrow D_{j+1}$ such that $ \varrho_{j+1}(0) = p_{j+1}$, and $\dot{\varrho}_{j+1} (0) = Y_{j+1}$. Put
 \begin{gather*}
 \gamma_{j+1} (t) = \Psi_{j+1}^{-1} \big( \gamma_j(t), \varrho_{j+1} (t)\big) \qquad \text{for all} \quad t\in \IR.
 \end{gather*}
By construction, $\gamma_{j+1}$ is a smooth path in $U_{j+1}$ such that~\eqref{Eq:PathProp} is fulf\/illed for $i=j+1$. This f\/inishes the induction step, and we obtain a family of smooth paths $\gamma_i$ with the desired properties.

Since $M$ is the smooth projective limit of the $M_i$, there exists a uniquely determined smooth path $\gamma\colon \IR \rightarrow M$ such that $\mu_i \circ \gamma = \gamma_i$ for all $i\in \IN$. In particular, this entails $\gamma (0) =p$, and $\dot{\gamma} (0) = Z$, or in other words that $Z$ is in the image of the map $ \mathscr{P}^\infty_{M,p} \rightarrow \IT_pM$.
\end{proof}

\begin{Example}\label{Ex:DerNoPath}This example shows that there exist prof\/inite dimensional manifolds having tangent vectors which can not be represented as the derivative of a~smooth path. Denote by $rS^k \subset \IR^{k+1}$ for $k\in \IN^*$ the $k$-sphere of radius $r > 0$. Moreover, denote for $1 \leq i < j$ by $\mu_{ij} \colon \IR^{j+1} \to \IR^{i+1}$ the projection onto the f\/irst $i+1$ coordinates. We use the same symbols for restrictions of $\mu_{ji}$ to open subsets. Now we def\/ine inductively a pfd system $( M_i,\mu_{ij})$ with $M_i \subset \IR^{i+1}$ open as follows:
\begin{gather*}
 M_0 := \IR, \!\quad M_1 := \IR^2 \setminus S^1, \!\quad
 M_2 := ( M_1 \times \IR) \setminus \tfrac 12 S^2, \!\quad \ldots , \!\quad M_{i+1} :=( M_i \times \IR ) \setminus \tfrac{1}{i+1} S^{i+1}.
\end{gather*}
Observe that all $\mu_{ij}$ are still surjective submersions when regarded as mappings from~$M_j$ to $M_i$. Next consider the point $p = (p_i)_{i\in \IN} \in M:=\lim\limits_{\longleftarrow \atop i\in \IN}M_i$, where $p_i := 0 \in M_i$. Now let $Y \in T_pM$ be the tangent vector represented by the family $(Y_i)_{i\in \IN^*}$ of tangent vectors
\begin{gather*}
 Y_i \colon \ \calC^\infty_{M_i, 0} \rightarrow \IR , \qquad [f]_0 \mapsto \frac{\partial f}{\partial x_1} (0) ,
\end{gather*}
where $(x_1,\ldots , x_i)$ are the canonical coordinates of $\IR^i$. Assume that there is a~smooth path $\gamma \colon ( - \varepsilon , \varepsilon ) \to M$ such that $\gamma (0) = p$ and $\dot{\gamma} (0) =Y$. Let $\gamma_i := \mu_i \circ \gamma$. Since $\dot{\gamma}_1 (0) = 1$, one can achieve after possibly shrinking $\varepsilon$ that $\dot{\gamma}_1 (t) > \frac 12$ for all $t \in ( - \varepsilon , \varepsilon )$. This implies by the mean value theorem that
$|\gamma (t)| \geq \frac 12 |t|$ for all $t \in ( - \varepsilon , \varepsilon )$. Now choose $i \in \IN^*$ such that $\frac 1i < \frac 14 \varepsilon$. Then
$\gamma_i (0)=0$ but $\gamma_i( \frac12 \varepsilon )$ has to be outside the connected component of $0$ in $M_i$. This is a~contradiction, so there does not exist a path $\gamma$ with the claimed properties, and $Y$ is not induced by a smooth path.
\end{Example}

Let us def\/ine a structure sheaf $\calC^\infty_{\IT M}$ on $\IT M$. To this end call a continuous map $f \in \calC (U)$ def\/ined on an open set $U\subset \IT M$ \emph{smooth}, if for every tangent vector $Z \in U$ there is an $i\in \IN$, an open neighborhood $U_i \subset \IT M_i$ of $Z_i:= \IT \mu_i (Z) $, and a smooth map $f_i \in \calC^\infty (U_i)$ such that $(\IT\mu_i)^{-1} (U_i) \subset U$ and $f_{|(\IT\mu_i)^{-1} (U_i)} = f_i \circ (\IT \mu_i)_{|(\IT\mu_i)^{-1} (U_i)}$. The spaces
\begin{gather*}
 \calC^\infty_{\IT M} (U) := \big\{ f \in \calC (U) \,|\, \text{$f$ is smooth}\big\}
\end{gather*}
for $U \subset \IT M$ open then form the section spaces of a sheaf $\calC^\infty_{\IT M}$ which we call the \emph{sheaf of smooth functions} on $\IT M$. By construction, the family $( \IT M_i, \IT \mu_{ij}, \IT \mu_i )$ now is a smooth projective representation of the locally ringed space $( \IT M, \calC^\infty_{\IT M} \big)$, hence $( \IT M, \calC^\infty_{\IT M} )$ becomes a prof\/inite dimensional manifold. Since $ \mu_i \circ \pi_{\IT M} = \pi_{\IT M_i} \circ \IT \mu_i $ for all $i\in \IN$, one immediately checks that the canonical map $\pi_{\IT M} \colon \IT M \rightarrow M $ is even a smooth map
between prof\/inite dimensional manifolds. With these preparations we can state:

\begin{Propanddef}\label{tang} The profinite dimensional manifold given by $( \IT M, \calC^\infty_{\IT M})$ and the pfd structure $[( \IT M_i , \IT \mu_{ij} , \IT \mu_i)]$ is called the \emph{tangent bundle} of $M$, and $\pi_{\IT M}\colon \IT M \rightarrow M$ its \emph{canonical projection}. The pfd structure $[( \IT M_i , \IT \mu_{ij} , \IT \mu_i )]$ depends only on the equivalence class $[( M_i , \mu_{ij},\mu_i)]$.
\end{Propanddef}

\begin{proof} In order to check the last statement, consider a smooth projective representation \linebreak $( M_a', \mu_{ab}' , \mu_a' )$ which is equivalent to $( M_i , \mu_{ij},\mu_i)$. Choose an equivalence transformation of smooth projective representations
 \begin{gather*}
( \varphi , F_a )\colon \ ( M_i , \mu_{ij}) \longrightarrow ( M_a', \mu_{ab}'),\qquad ( \psi , G_i)\colon \ ( M_a', \mu_{ab}') \longrightarrow ( M_i , \mu_{ij} ) .
 \end{gather*}
 Then one obtains surjective submersions
\begin{gather*}
( \varphi , \IT F_a )\colon \ ( \IT M_i , \IT \mu_{ij}) \longrightarrow ( \IT M_a', \IT \mu_{ab}'), \qquad
( \psi , \IT G_i)\colon \ ( \IT M_a', \IT \mu_{ab}') \longrightarrow ( \IT M_i , \IT \mu_{ij} )
\end{gather*}
 such that the following diagrams commute for all $i,a\in \IN$:
 \begin{gather*}
 \xymatrix{ \IT M_i && \ar[ll]_{\IT \mu_{i\,\varphi(\psi(i))} } \ar[dl]^{\IT F_{\psi (i)}} \IT M_{\varphi(\psi(i))} \\
 & \IT M_{\psi (i)}' \ar[ul]^{\IT G_i} } \qquad \text{and} \qquad
 \xymatrix{ \IT M_a' && \ar[ll]_{\IT \mu_{a\,\psi( \varphi(a))}' } \ar[dl]^{\IT G_{\varphi(a)}} \IT M_{\psi(\varphi(a))}' \\
 & \IT M_{\varphi (a)} \ar[ul]^{\IT F_a}
 }\end{gather*}
 Hence, $( \IT M_a', \IT \mu_{ab}')$ is a smooth projective system which is equivalent to $( \IT M_i, \IT \mu_{ij})$. Now recall that the map $\IT \mu_a' \colon \IT M \rightarrow \IT M_a'$ is def\/ined by $\IT \mu_a' ( Z_p) = Z_p \circ (\mu_a')^*$, where $Z_p\in \IT_pM $, $p\in M$, and $(\mu_a')^*$ denotes the pullback by $\mu_a'$. One concludes that for all $i\in \IN$
 \begin{gather*}
 \IT G_i \circ \IT \mu_{\psi (i)}' (Z_p) = \IT G_i \big( Z_p \circ (\mu_{\psi (i)}')^*\big) = Z_p \circ (\mu_{\psi (i)}')^* \circ G_i^* = Z_p \circ \mu_i^* = \IT \mu_i (Z_p) ,
 \end{gather*}
and likewise that $ \IT F_a \circ \IT \mu_{\varphi (a)} (Y_p) = \IT \mu_a' (Y_p) $ for all $a\in \IN$. This entails that the smooth projective representations $(\IT M_i, \IT \mu_{ij}, \IT \mu_i)$ and $( \IT M_a', \IT \mu_{ab}' , \IT \mu_a')$ of the tangent bundle $(\IT M, \calC^\infty_{\IT M})$ are equivalent, and the proof is f\/inished.
\end{proof}

\begin{Remark}\quad
\begin{enumerate}\itemsep=0pt
\item[a)] By Example~\ref{Ex:pfdmfd}(c), the induced smooth projective system $( \IT M_i, \IT \mu_{ij})$ has the canonical smooth projective limit
\begin{gather*}
 \big( \widetilde{\IT} M ,\calC^\infty_{\widetilde{\IT} M} \big):= \lim \limits_{\longleftarrow \atop i \in \IN} \big(\IT M_i,\calC^\infty_{\IT M_i}\big).
\end{gather*}
Denote its canonical maps by $\widetilde{\IT}\mu_i \colon \widetilde{\IT} M \rightarrow \IT M_i$. By the universal property of projective limits there exists a unique smooth map
\begin{gather*}
 \tau \colon \ \IT M \longrightarrow \widetilde{\IT} M
\end{gather*}
such that $\widetilde{\IT} \mu_i \circ \tau = \IT \mu_i$ for all $i\in \IN$. By construction of the prof\/inite dimensional manifold structure on the tangent bundle $\IT M$, the map $\tau$ is even a linear dif\/feomorphism, and is in fact given by
\begin{gather*}
 \IT M \ni Y \longmapsto \big( \IT \mu_i (Y) \big)_{i\in \IN} \in \widetilde{\IT}M.
\end{gather*}

\item[b)] As a generalization of the tangent bundle, one can def\/ine for every $k\in \IN^* := \IN \setminus \{ 0 \}$ the tensor bundle $\IT^{k,0}M$ of $M$. First, one puts for every $p\in M$
 \begin{gather*}
 \IT^{k,0}_pM := \widehat{\bigotimes}^k\IT_p M,
 \end{gather*}
where $\widehat{\otimes}$ denotes the completed projective tensor product, see Example~\ref{ExTensorProds}(a) or \cite{GroPTTEN,TreTVSDK}. The canonical maps $\IT\mu_{p,i} := {\IT\mu_i}_ {|\IT_pM}\colon \IT_p M \rightarrow \IT_{p_i} M_i$, $p_i := \mu_i(p)$ induce continuous linear maps
 \begin{gather*}
 \IT^{k,0}\mu_{p,i} := \widehat{\bigotimes}^k\IT\mu_{p,i} \colon \ \IT^{k,0}_pM\longrightarrow \IT^{k,0}_{p_i} M_i
 \end{gather*}
by the universal property of the completed projective tensor product. Likewise, one constructs for $i\leq j$ the continuous linear maps
 \begin{gather*}
 \IT^{k,0}\mu_{p_j,ij} \colon \ \IT^{k,0}_{p_j}M_j\longrightarrow \IT^{k,0}_{p_i} M_i,
 \end{gather*}
 which turn $\big( \IT^{k,0}_{p_i} M_i , \IT^{k,0}\mu_{p_j,ij} \big) $ into a projective system of (f\/inite dimensional) real vector spaces. By Theorem~\ref{ThmAppProjLimTensProd}, its projective limit within the category of locally convex topo\-lo\-gi\-cal Hausdorf\/f spaces is given by $\IT^{k,0}_pM$ together with the continuous linear maps $\IT^{k,0}\mu_{p,i}$, that means we have
 \begin{gather*} 
 \IT^{k,0}_pM = \lim\limits_{\longleftarrow \atop i\in \IN} \IT^{k,0}_{p_i} M_i .
 \end{gather*}
 Now def\/ine
 \begin{gather*}
 \IT^{k,0}M:=\bigcup_{p\in M} \IT^{k,0}_p M,
 \end{gather*}
 and give $\IT^{k,0}M$ the coarsest topology such that all the canonical maps
 \begin{align*}
 \IT^{k,0}\mu_i\colon \ & \IT^{k,0}M \longrightarrow \IT^{k,0}M_i, \\
 & Z_1 \otimes \cdots \otimes Z_k \longmapsto \IT \mu_i (Z_1) \otimes \cdots \otimes \IT \mu_i (Z_k)
 \end{align*}
are continuous. By construction, $\IT^{k,0}M$ together with the maps $\IT^{k,0}\mu_i$ has to be a projective limit of the projective system $\big( \IT^{k,0}M_i,\IT^{k,0}\mu_{ij} \big)$. The sheaf of smooth functions $\calC^\infty_{\IT^{k,0}M}$ is uniquely determined by requiring axiom~\ref{Ite:StrucSheaf} to hold true. One thus obtains a prof\/inite dimensional manifold which depends only on the equivalence class of the smooth projective representation and which will be denoted by $ \IT^{k,0}M$ in the following. Moreover, $\IT^{k,0}$ even becomes a functor on the category of prof\/inite dimensional manifolds. If $(N,\calC^\infty_{N})$ is another prof\/inite dimensional manifold and $f\colon M\to N$ a smooth map, then one naturally obtains the smooth map
 \begin{align*}
 \IT^{k,0}f\colon \ & \IT^{k,0}M\longrightarrow \IT^{k,0}N, \\
 &Z_1 \otimes \cdots \otimes Z_k \longmapsto \IT f (Z_1) \otimes \cdots \otimes \IT f (Z_k),
\end{align*}
 which satisf\/ies $\pi_{\IT^{k,0}N}\circ \IT^{k,0}f = f \circ \pi_{\IT^{k,0}M}$.
\end{enumerate}
\end{Remark}

We continue with:

\begin{Definition}
Let $U\subset M$ be open. Then a smooth section $V\colon U \rightarrow \IT M$ of $\pi_{\IT M}\colon \IT M \rightarrow M$ is called a~\emph{smooth vector field} on $M$ over $U$. The space of smooth vector f\/ields over $U$ will be denoted by $\calX^\infty (U)$.
\end{Definition}

Assume that for $U\subset M$ open we are given a smooth vector f\/ield $V\colon U\rightarrow \IT M$ and a~smooth function $f\colon U\rightarrow \IR$. We then def\/ine a function $Vf$ over $U$ by putting for $p\in U$
 \begin{gather*} 
 Vf (p) := V(p) \big( [f]_p \big) .
 \end{gather*}
 \begin{Lemma}
 For every $V\in \calX^\infty (U)$ and $f\in \calC^\infty (U)$, the function $Vf$ is smooth.
 \end{Lemma}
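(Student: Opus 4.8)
The plan is to verify smoothness locally, reducing the statement to the finite-dimensional situation where the claim is classical. Fix $p \in U$. By the definition of $\calX^\infty(U)$ the vector field $V$ is a smooth section, and by the structure sheaf axiom \ref{Ite:StrucSheaf} applied to $f$, there is an index $i \in \IN$, an open set $U_i \subset M_i$ with $p \in \mu_i^{-1}(U_i) \subset U$, and a function $f_i \in \calC^\infty(U_i)$ such that $f_{|\mu_i^{-1}(U_i)} = f_i \circ {\mu_i}_{|\mu_i^{-1}(U_i)}$. The key observation is that the value $V(q)\big([f]_q\big)$ depends on $V(q)$ only through its image $\IT\mu_i\big(V(q)\big)$: indeed, by the very definition of $\IT\mu_i$, one has $\IT\mu_i\big(V(q)\big)\big([f_i]_{q_i}\big) = V(q)\big([f_i \circ \mu_i]_q\big) = V(q)\big([f]_q\big)$ for $q \in \mu_i^{-1}(U_i)$, where $q_i := \mu_i(q)$.

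Next I would package this into a composition of smooth maps. Since $V$ is a smooth section of $\pi_{\IT M}$, the composite $\IT\mu_i \circ V$ is a smooth map from $\mu_i^{-1}(U_i)$ into $\IT M_i$, landing in $\IT\mu_i\big(V(\mu_i^{-1}(U_i))\big) \subset \IT U_i$. On the finite-dimensional manifold $M_i$ the function $\IT U_i \ni w \mapsto w\big([f_i]_{\pi_{\IT M_i}(w)}\big)$, i.e.\ the action of a tangent vector on the germ of the smooth function $f_i$, is itself a smooth function on $\IT U_i$; in local coordinates it is simply $\sum_k w^k \,\partial_k f_i$ evaluated at the footpoint, which is manifestly smooth. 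Call this function $g_i \in \calC^\infty(\IT U_i)$. Then on $\mu_i^{-1}(U_i)$ the above identity yields
\[
  (Vf)_{|\mu_i^{-1}(U_i)} = g_i \circ (\IT\mu_i \circ V) = g_i \circ \IT\mu_i \circ V.
\]

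It remains to recognize the right-hand side as a smooth function on the profinite dimensional manifold $M$. Because $M$ is a profinite dimensional manifold with representation $(M_i,\mu_{ij},\mu_i)$, the map $\mu_i : M \to M_i$ is smooth, and hence so is $\IT\mu_i : \IT M \to \IT M_i$ by the construction of the pfd structure on $\IT M$ in Proposition and Definition \ref{tang}; composing with the smooth section $V$ gives a smooth map $\mu_i^{-1}(U_i) \to \IT M_i$ whose image lies in the open set $\IT U_i$. Postcomposing with the smooth finite-dimensional function $g_i$ and invoking axiom \ref{axiomsubmers}, more precisely the smoothness condition \ref{Ite:StrucSheaf} together with the fact that $g_i$ pulls back along $\IT\mu_i$, exhibits $Vf$ locally near $p$ as $g_i \circ \IT\mu_i \circ V$, which is a composition of smooth maps between profinite dimensional manifolds and therefore smooth. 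Since $p \in U$ was arbitrary and smoothness is a local property, $Vf \in \calC^\infty(U)$.

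The main obstacle I anticipate is bookkeeping the index $i$ cleanly: $f$ being local/smooth near $p$ furnishes an index $i$ for $f$, but a priori the vector field $V$ need not factor through the \emph{same} level $i$. This is resolved by the observation above that only $\IT\mu_i\big(V(q)\big)$ enters the formula, so no compatibility between the factorization level of $V$ and that of $f$ is needed; the tangent map $\IT\mu_i$ does the projecting automatically. The remaining work is the genuinely finite-dimensional and routine verification that the pairing $(w,f_i) \mapsto w\big([f_i]\big)$ is smooth in $w$, which I would not belabor.
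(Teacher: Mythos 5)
Your proof is correct, but it takes a genuinely different route from the paper's. Both arguments begin the same way: factor $f$ locally as $f_i \circ \mu_i$ and observe that only $\IT\mu_i\big(V(q)\big)$ enters the pairing. From there the paper works concretely: it uses the fact that the smooth map $\IT\mu_i \circ V$ takes values in the \emph{finite-dimensional} manifold $\IT M_i$ to factor it locally through a higher level $\mu_j$, $j \geq i$, producing a finite-dimensional vector field $V_p$ along $\mu_{ij}$ on some $U_{pj}\subset M_j$; the pairing $g_{pj}(q_j) := V_p(q_j)\big([f_i]_{\mu_{ij}(q_j)}\big)$ then lives entirely on $U_{pj}$, and $Vf = g_{pj}\circ\mu_j$ locally, which verifies \ref{Ite:StrucSheaf} directly. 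You instead stay abstract: you write $Vf = g_i \circ \IT\mu_i \circ V$ locally, where $g_i$ is the function $w \mapsto w\big([f_i]\big)$ on $\IT U_i$ (i.e.\ $\Id f_i$ viewed as a fiberwise-linear function on the tangent bundle), and you invoke smoothness of each factor plus closure of smooth maps between profinite dimensional manifolds under composition. Your route buys economy: it completely avoids the paper's asserted-but-not-proved step that a smooth map from a pfd manifold into a finite-dimensional manifold locally factors through some finite level $\mu_j$ --- a fact which genuinely requires an argument (gluing the factorization levels of finitely many coordinate functions), and which your appeal to the definition of smoothness of the section $V$ packages away. What the paper's route buys is explicit finite-level data: the vector field $V_p$ along $\mu_{ij}$ is exactly the prototype of the \emph{local} vector fields and their types introduced immediately after the lemma, so the explicit factorization is machinery reused later. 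One small slip: your citation of \ref{axiomsubmers} is spurious --- that axiom concerns profinite dimensional submanifolds and plays no role here; the correct reference is only \ref{Ite:StrucSheaf}, as you yourself then say, so the mathematics is unaffected.
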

 \begin{proof} Choose a point $p\in U$, and then an open $U_i\subset M_i$ and a function $f_i \in \calC^\infty (U_i)$ for some appropriate $i\in \IN$ such that $p\in \mu_i^{-1} (U_i) \subset U$ and
 \begin{gather} \label{Eq:LocRepFunc}
 f_{|\mu_i^{-1} (U_i)} = f_i \circ {\mu_i}_{|\mu_i^{-1} (U_i)} .
 \end{gather}
Consider $V_i\colon M \rightarrow \IT M_i$, $V_i := \IT \mu_i \circ V$. Since $V_i$ takes values in a f\/inite dimensional smooth manifold, there exists an integer $j_p\geq i$ (which we brief\/ly denote by $j$, if no confusion can arise), an open $U_{pj}\subset M_j$ and a smooth vector f\/ield $V_p \colon U_{pj} \rightarrow \IT M_i$ along $ \mu_{ij}$ such that $p\in \mu_j^{-1} (U_{pj})$, $ U_{pj} \subset \mu_{ij}^{-1} (U_i)$ and
 \begin{gather*} 
 {\IT \mu_i \circ V}_{|\mu_j^{-1} (U_{pj})} = V_p \circ {\mu_j}_{|\mu_j^{-1} (U_{pj})} .
 \end{gather*}
 Now def\/ine $g_{pj} \colon U_{pj} \rightarrow \IR$ by
 \begin{gather*}
 g_{pj} (q_j) := V_p (q_j) \big( [f_i]_{\mu_{ij}(q_j)} \big) \qquad \text{for all} \quad q_j \in U_{pj}.
 \end{gather*}
 Then $g_{pj}$ is smooth, hence $g_p := g_{pj} \circ {\mu_j}_{| \mu_j^{-1} (U_{pj})}$ is an element of $\calC^\infty \big(\mu_j^{-1} (U_{pj}) \big)$.
 Now one checks for $q\in \mu_j^{-1} (U_{pj})$ that
 \begin{gather*} 
 g_p (q) = V_p (\mu_j(q)) \big( [f_i]_{\mu_i(q)} \big) = V_i (q) \big( [f_i]_{\mu_i(q)} \big) = V (q) \big( [f]_q \big)
 \end{gather*}
 by equation~\eqref{Eq:LocRepFunc}. Hence
 \begin{gather*}
 g_p = (Vf)_{|\mu_j^{-1} (U_{pj})} ,
 \end{gather*}
 and $Vf$ is smooth indeed.
 \end{proof}

\begin{Proposition} Every vector field $V\in \calX^\infty (U)$ defined over an open subset $U\subset M$ induces a derivation
 \begin{gather*}
 \delta_V \colon \ \calC^\infty (U) \longrightarrow\calC^\infty (U),\qquad f \longmapsto Vf.
 \end{gather*}
\end{Proposition}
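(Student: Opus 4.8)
The plan is to check directly that $\delta_V\colon f\mapsto Vf$ is an $\IR$-linear map satisfying the Leibniz rule, verifying both properties pointwise and then appealing to the preceding Lemma to land back in $\calC^\infty(U)$.

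First I would note that well-definedness of $\delta_V$ as a map $\calC^\infty(U)\to\calC^\infty(U)$ is already guaranteed: the Lemma just proved shows $Vf\in\calC^\infty(U)$ for every $f\in\calC^\infty(U)$. So it only remains to establish the algebraic derivation properties, and these I would verify by evaluating at an arbitrary point $p\in U$ and using that $V(p)\in\IT_pM=\operatorname{Der}(\calC^\infty_p,\IR)$ is itself a derivation on the stalk.

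For $\IR$-linearity, fixing $f,g\in\calC^\infty(U)$, $\lambda\in\IR$, and $p\in U$, I would use that germ formation at $p$ is $\IR$-linear together with the linearity of $V(p)$ to get
\[
  \delta_V(f+\lambda g)(p)=V(p)\big([f+\lambda g]_p\big)=V(p)\big([f]_p\big)+\lambda\,V(p)\big([g]_p\big)=(\delta_V f+\lambda\,\delta_V g)(p),
\]
and since $p$ is arbitrary this is an identity of functions. For the Leibniz rule, using $[fg]_p=[f]_p[g]_p$ and the stalkwise derivation property of $V(p)$ (with $\IR$ regarded as a $\calC^\infty_p$-module via evaluation at $p$), I would obtain
\[
  \delta_V(fg)(p)=V(p)\big([f]_p[g]_p\big)=f(p)\,V(p)\big([g]_p\big)+g(p)\,V(p)\big([f]_p\big)=\big(f\,\delta_V g+g\,\delta_V f\big)(p).
\]

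There is no genuine obstacle here: the only nontrivial content is the smoothness of $Vf$, which has already been settled in the preceding Lemma, so that $\delta_V$ really takes values in $\calC^\infty(U)$. Everything else reduces to the pointwise derivation property built into the very definition of the tangent vectors $V(p)$.
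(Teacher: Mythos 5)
Your proposal is correct and follows essentially the same route as the paper: smoothness of $Vf$ is delegated to the preceding Lemma, and the algebraic properties ($\IR$-linearity and the Leibniz rule) are verified pointwise using the fact that $V(p)\in\operatorname{Der}\big(\calC^\infty_p,\IR\big)$ for each $p\in U$. The only cosmetic difference is that you spell out the linearity computation, which the paper dismisses as clear by construction.
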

\begin{proof}
 By construction, it is clear that the map
 \begin{gather*}
 \garb (U)\ni f \longmapsto Vf \in \garb (U)
 \end{gather*}
 is $\IR$-linear. It remains to check that $\delta_V $ is a derivation, or in other words that it satisf\/ies Leibniz' rule. But this follows immediately by the def\/inition of the action of~$V$ on $\garb (U)$ and the fact that $V(p) \in \operatorname{Der} \big(\calC^\infty_p,\IR\big)$ for all $p\in U$. More precisely, one has, for $p\in U$ and $f,g \in \garb (U)$,
 \begin{gather*}
 V(fg) (p) = V(p) \big( [fg]_p \big) = f(p) V(p) \big( [g]_p \big) + g(p) V(p) \big( [f]_p \big) = \big( f V(g) + g V(f) \big) (p).
 \end{gather*}
 This f\/inishes the proof.
\end{proof}

\begin{Definition}Let $U\subset M$ be open. A smooth vector f\/ield $V \in \calX^\infty (U)$ is called \emph{local}, if for every $i\in \IN$ there is an integer $m_i\geq i$ and a~smooth vector f\/ield $V_{im_i} \colon \mu_{m_i} (U) \rightarrow \IT M_i$ along~$ \mu_{im_i}$ such that
 \begin{gather} \label{typ}
 \IT \mu_i \circ V = V_{im_i} \circ {\mu_{m_i}}_{|U} .
 \end{gather}
The space of local vector f\/ields over $U$ will be denoted by $\locX (U)$.
\end{Definition}

\begin{Remark}{\samepage\quad
\begin{enumerate}\itemsep=0pt
\item[a)] Obviously, $\calX^\infty$ is a sheaf of $\calC^\infty$-modules on $M$, and $\locX $ a presheaf of $\locC$-modules. Note that $\locX $ depends only on the pfd structure $[(M_i,\mu_{ij},\mu_i)]$.
\item[b)] Let $V\in \locX (U)$, and pick a representative $(M_i,\mu_{ij},\mu_i)$ of the underlying pfd structure. If $(m_i)_{i\in \IN}$ is a sequence of integers such that~(\ref{typ}) holds true, we sometimes say that $V$ is of \emph{type $(m_0,m_1,m_2, \ldots )$ with respect to the smooth projective representation $(M_i,\mu_{ij},\mu_i)$}. The notion of the type of a local vector f\/ield is known from jet bundle literature~\cite{anderson}, where it makes perfect sense, since the prof\/inite dimensional manifold of inf\/inite jets has a distinguished representative of the underlying pfd structure, see Section~\ref{inf}.
\end{enumerate}}
\end{Remark}

Now we are in the position to prove the following structure theorem:

\begin{Theorem}\label{deri} The map
 \begin{gather*}
 \delta\colon \ \calX^\infty (M) \longrightarrow \operatorname{Der} \big( \calC^\infty (M), \calC^\infty (M) \big),\qquad V\longmapsto \delta_V
 \end{gather*}
 is a bijection. Moreover, for every $V\in \calX^\infty(M)$, the derivation $\delta_V$ leaves the algebra $\locC (M)$ of local functions on $M$ invariant, if and only if one has $V \in \locX (M) $.
\end{Theorem}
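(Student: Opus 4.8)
The plan is to treat the two assertions separately, with the existence of bump functions on $M$ as the common technical backbone. First I would record that for every $p\in M$ and every basic open neighborhood $\mu_i^{-1}(U_i)\ni p$ there is a bump function: choosing $\chi_i\in\garb(M_i)$ with $\chi_i\equiv 1$ near $p_i:=\mu_i(p)$ and $\operatorname{supp}\chi_i\subset U_i$, the pullback $\chi_i\circ\mu_i$ lies in $\calC^\infty(M)$ (it is a local function) and is $1$ near $p$ with support inside $\mu_i^{-1}(U_i)$. Two consequences follow: (i) every germ $[f]_p\in\calC^\infty_p$ is the germ of a \emph{globally} defined smooth function (represent $[f]_p$ as $f_i\circ\mu_i$ via \ref{Ite:StrucSheaf}, multiply $f_i$ by such a bump, and pull back); and (ii) every derivation $D$ of $\calC^\infty(M)$ is local, i.e.\ $D(h)(p)=0$ whenever $h$ vanishes near $p$, by the usual Leibniz argument $0=D(\chi h)(p)=\chi(p)D(h)(p)$ using $h(p)=0$ and $\chi h\equiv 0$.

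Injectivity of $\delta$ is then immediate from (i): if $\delta_V=\delta_W$, then $V(p)$ and $W(p)$ agree on all germs of global functions, hence on all of $\calC^\infty_p$, so $V(p)=W(p)$ for every $p$. For surjectivity I would, given $D$, define $V(p)\in\IT_pM$ by $V(p)([f]_p):=D(F)(p)$, where $F\in\calC^\infty(M)$ is any global representative of the germ $[f]_p$; this is well defined by (ii) and is a derivation on $\calC^\infty_p$ because evaluation at $p$ is an algebra homomorphism and $D$ satisfies Leibniz. Taking $F=f$ for global $f$ shows $\delta_V=D$, so it only remains to prove that $p\mapsto V(p)$ is a smooth section. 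Since $\IT M=\varprojlim\IT M_i$ as a profinite dimensional manifold (Example \ref{Ex:pfdmfd}\,\ref{Ex:pfdmfdProjLim}), it suffices to check that each $\IT\mu_i\circ V:M\to\IT M_i$ is smooth. Working in a chart $(W_i;x^1,\dots,x^n)$ around $p_i$ and extending the $x^k$ to global functions $\xi^k\in\garb(M_i)$ by a bump, one computes on $\mu_i^{-1}(W_i')$ that $(\IT\mu_i\circ V)(p)(x^k)=D(\xi^k\circ\mu_i)(p)$; as the base-point map is $\mu_i$ and the coefficient functions $D(\xi^k\circ\mu_i)$ lie in $\calC^\infty(M)$, the map $\IT\mu_i\circ V$ is smooth, and the universal property assembles these into a smooth $V$ with $\delta_V=D$.

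For the second assertion the easy direction is ($\Leftarrow$): if $V\in\locX(M)$ and $f=f_l\circ\mu_l\in\locC(M)$, then writing $\IT\mu_l\circ V=V_{lm_l}\circ\mu_{m_l}$ gives $Vf=\big(q\mapsto V_{lm_l}(q)([f_l]_{\mu_{lm_l}(q)})\big)\circ\mu_{m_l}$, which is again local, so $\delta_V$ preserves $\locC(M)$. The substance is the converse, and this is the step I expect to be the main obstacle: smoothness of $\IT\mu_i\circ V$ only yields a \emph{pointwise} factorization through some $\mu_{j_p}$, whereas $V\in\locX(M)$ requires a \emph{single} index $m_i$ valid on all of $M_i$. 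To force uniformity I would fix a Whitney embedding $y=(y^1,\dots,y^N):M_i\hookrightarrow\IR^N$, so that the global functions $y^k$ have the property that their differentials span each cotangent space $\IT^*_{q_i}M_i$; hence a tangent vector at any point of $M_i$ is determined by its values on $y^1,\dots,y^N$.

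Applying the hypothesis to the finitely many local functions $Y^k:=y^k\circ\mu_i\in\locC(M)$ yields $\delta_V(Y^k)=VY^k=h^k\circ\mu_{n_k}$ for some indices $n_k$ and $h^k\in\garb(M_{n_k})$, and I set $m_i:=\max\{i,n_1,\dots,n_N\}$. If $\mu_{m_i}(p)=\mu_{m_i}(p')$, then $\mu_{n_k}(p)=\mu_{n_k}(p')$ for all $k$, so $(\IT\mu_i\circ V)(p)$ and $(\IT\mu_i\circ V)(p')$ take the same values on every $y^k$ and therefore coincide; thus $\IT\mu_i\circ V$ factors as $V_{im_i}\circ\mu_{m_i}$ for a well-defined vector field $V_{im_i}$ along $\mu_{im_i}$. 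Its smoothness I would verify through the same embedding: $\IT y\circ V_{im_i}$ has components $y^k\circ\mu_{im_i}$ and $V_{im_i}(\,\cdot\,)(y^k)=h^k\circ\mu_{n_km_i}$, all smooth on $M_{m_i}$, so $V_{im_i}$ is smooth and $V\in\locX(M)$. The two places demanding the most care are thus the locality/bump-function lemma underpinning the bijection and the embedding argument that upgrades the pointwise factorization to the uniform index $m_i$.
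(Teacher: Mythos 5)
Your proof is correct and follows essentially the same route as the paper's: the bijection is obtained by reducing to the finite-dimensional levels through the $\mu_i$ (your explicit bump-function lemmas supply what the paper uses implicitly when it identifies the point-derivations $D_{pi}\colon \calC^\infty(M_i)\to\IR$, $f\mapsto D(f\circ\mu_i)(p)$, with tangent vectors and checks smoothness in charts of $\IT M_i$), and your Whitney-embedding argument for the locality statement --- applying the hypothesis to the pulled-back coordinates $y^k\circ\mu_i$, passing to a common index $m_i$, and recovering $V_{im_i}$ by inverting $\IT y$ on its image --- is precisely the paper's argument with a proper embedding $\chi\colon M_i\hookrightarrow\IR^N$ in place of $y$. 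No gaps.
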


\begin{proof} \emph{Surjectivity}: Assume that $D\colon \calC^\infty (M) \rightarrow \calC^\infty (M)$ is a derivation. Then one obtains for each
 $i\in \IN$ and point $p\in M$ a linear map
 \begin{gather*}
 D_{pi} \colon \ \calC^\infty (M_i) \longrightarrow \IR, \qquad f \longmapsto D (f \circ \mu_i) (p) .
 \end{gather*}
 Note that for $f, f' \in \calC^\infty (M_i)$
 \begin{align*}
 D_{pi} (ff')& = D \big( (f f') \circ \mu_i \big) (p) = f \circ \mu_i (p) D (f'\circ \mu_i) (p) + f' \circ \mu_i (p) D (f \circ \mu_i) (p)\\
 & = f \circ \mu_i (p) D_{pi} (f') + f' \circ \mu_i (p) D_{pi} (f ) ,
 \end{align*}
 which entails that there is a tangent vector $V_{pi} \in \IT_{\mu_i(p)}M_i$ such that $ D_{pi} = V_{pi}$. Observe that for $j\geq i$ the relation
 \begin{gather*}
 D_{pi} (f) = D (f\circ \mu_i) (p) = D (f\circ \mu_{ij} \circ \mu_j) (p) = D_{pj} (f\circ \mu_{ij})
 \end{gather*}
holds true, which entails that $V_{pi} = \IT \mu_{ij} \circ V_{pj}$. Hence, the sequence of tangent vectors $(V_{pi})_{i\in \IN}$ def\/ines an element~$V_p$ in
 \begin{gather*}
 \IT_pM \cong \lim\limits_{\longleftarrow \atop i\in \IN} \IT_{\mu_i (p)} M_i .
 \end{gather*}
 We thus obtain a section $V\colon M\to\IT M$, $p \mapsto V_p$. Let us show that $V$ is smooth. To this end, consider the composition
 \begin{gather*}
 V_i := \IT \mu_i \circ V\colon \ M \longrightarrow \IT M_i.
 \end{gather*}
By construction $V_i (p) = V_{pi}$ for all $p\in M$. It suf\/f\/ices to show that each of the maps $V_i$ is smooth. To show this, choose a coordinate neighborhood $U_i\subset M_i$ of $\mu_i(p)$, and coordinates
 \begin{gather*}
 \big(x^1,\dots ,x^k\big) \colon \ U_i \longrightarrow \IR^k.
 \end{gather*}
 Then
 \begin{gather*}
\big(x^1\circ \pi_{\IT U_i},\dots ,x^k\circ \pi_{\IT U_i}, \Id x^1, \dots , \Id x^k\big) \colon \ \IT U_i \longrightarrow \IR^{2k}
 \end{gather*}
is a local coordinate system of $\IT M_i$. The map $V_i$ now is proven to be smooth, if $\Id x^l\circ V_i$ is smooth for $1\leq l\leq k$. But
 \begin{gather*}
 \Id x^l \circ {V_i}_{|\mu_i^{-1}(U_i)} = D\big(x^l\circ {\mu_i}_{|\mu_i^{-1}(U_i)}\big),
 \end{gather*}
 since for $q\in \mu_i^{-1} (U_i)$
 \begin{gather*}
 \Id x^l \circ {V_i}_{|\mu_i^{-1}(U_i)}(q) = V_{qi}(q) \big([x^l]_{\mu_i(q)}\big)= D_{qi} \big(x^l\big) = D\big(x^l\circ {\mu_i}_{|\mu_i^{-1}(U_i)}\big)(q).
 \end{gather*}
Hence each $V_i$ is smooth, and $V $ is a smooth vector f\/ield on $M$ which satisf\/ies $\delta_V=D$. This proves surjectivity.

\emph{Injectivity}: Assume that $V$ is a smooth vector f\/ield on $M$ such that $\delta_V=0$. This means that $\delta_Vf (p)=0$ for all $f\in \calC^\infty (M)$ and $p\in M$. Choose now a $i\in\IN$ and let $f_i$ be a smooth function on $M_i$. Put $f:=f_i\circ \mu_i$ and $V_i =\IT \mu_i \circ V$. Then, we have for all $p\in M$
 \begin{gather*}
 V_i (p) \big( [f_i]_{\mu_i(p)} \big) = \delta_V f (p) = 0,
 \end{gather*}
which implies that $V_i(p) =0$ for all $p\in M$. Since $V(p)$ is the projective limit of the $V_i(p)$, we obtain $V(p)=0$ for all $p\in M$, hence $V=0$. This f\/inishes the proof that $\delta$ is bijective.

\emph{Local vector fields}: Next, let us show that for a local vector f\/ield $V \colon M \rightarrow \IT M$ the deriva\-tion~$\delta_V$ maps local functions to local ones. To this end choose for every $i\in \IN$ an integer $m_i\geq i$ such that there exists a smooth vector f\/ield $V_{im_i} \colon M_{m_i} \rightarrow \IT M_i$ along $\mu_{im_i}$ which satisf\/ies
 \begin{gather*}
 \IT \mu_i \circ V = V_{im_i} \circ \mu_{m_i} .
 \end{gather*}
Now let $f$ be a local function on $M$, which means that $f= f_i \circ \mu_i$ for some $i\in \IN$ and $f_i \in \calC^\infty (M_i )$. Def\/ine $g_{m_i}\in \calC^\infty (M_{m_i})$ by $g_{m_i} (q) = V_{im_i} (q) \big( [ f_i ]_{\mu_{im_i} (q)} \big)$ for all $q\in M_{m_i}$. Then, one obtains for $p\in M$
 \begin{gather*}
 \delta_V f (p) = V_{im_i} (\mu_{m_i} (p)) \big( [ f_i ]_{\mu_i (p)} \big) = g_{m_i}(\mu_{m_i} (p)) ,
 \end{gather*}
 which means that $\delta_V f = g_{m_i} \circ \mu_{m_i}$ is local.

\emph{Invariance of $\locC (M)$}: Finally, we have to show that if $\delta_V$ for $V \in \calX^\infty (M)$ leaves the space $\locC (M) $ invariant, the vector f\/ield $V$ has to be local. To this end f\/ix $i\in \IN$ and
 choose a proper embedding
 \begin{gather*}
 \chi=(\chi_1 , \ldots , \chi_N) \colon \ M_i \longhookrightarrow \IR^N.
 \end{gather*}
Then $\chi_l \circ \mu_i\in \locC (M)$ for $l=1,\ldots,N$, hence there exist by assumption $j_1, \ldots , j_N\in \IN$ and $g_{il} \in \calC^\infty (M_{j_l}) $ such that
 \begin{gather*}
 \delta_V( \chi_l \circ \mu_i) = g_{il} \circ \mu_{j_l} .
 \end{gather*}
 After possibly increasing the $j_l$, we can assume that $m_i:= j_1 = \dots = j_N \geq i$. Denote by $z_l\colon \IR^N \rightarrow \IR$ the canonical projection onto the $l$-th coordinate, and def\/ine the vector f\/ield $\widetilde {V}_{im_i} \colon M_{m_i} \rightarrow \IT \IR^N$ along $\chi \circ \mu_{im_i}$ by
 \begin{gather*}
 \widetilde {V}_{im_i} (q) := \sum_{l=1}^N g_{lj_l} (q) \dfrac{\partial}{\partial z_l}_{|\chi (\mu_{im_i} (q))} \qquad \text{for} \quad q \in M_{m_i}.
 \end{gather*}
 Since by construction
 \begin{gather*}
 \widetilde {V}_{im_i} (\mu_{m_i} (p)) \big( [z_l]_{\chi (\mu_i (p))} \big) = g_{il}(\mu_{m_i} (p)) =
 (\IT \mu_i \circ V) (p) \big( [\chi_l]_{\mu_i (p)} \big)
 \end{gather*}
 for all $p\in M$, $\widetilde {V}_{im_i} (y)$ is in the image of $\IT_q\chi$ for every $q\in M_{m_i}$, hence
 \begin{gather*}
 V_{im_i} \colon \ M_{m_i} \longrightarrow \IT M_i, \qquad q \longmapsto (\IT_y\chi)^{-1} \big( \widetilde {V}_{im_i} (q) \big)
 \end{gather*}
 is well-def\/ined and satisf\/ies $\IT \mu_i\circ V = V_{im_i} \circ \mu_{m_i}$. Therefore, $V$ is a local vector f\/ield.
\end{proof}

The following result is an immediate consequence of Theorem \ref{deri}.

\begin{Corollary}For all $V,W\in \calX^\infty(M)$, the map
 \begin{gather*}
 [V,W] \colon \ \calC^\infty (M) \longrightarrow \calC^\infty (M), \qquad f \longmapsto V (Wf) - W (Vf)
 \end{gather*}
is a derivation on $\calC^\infty (M)$. Its corresponding underlying vector field will be denoted by $[V,W]$ as well, and will be called the \emph{Lie bracket} of~$V$ and~$W$. The Lie bracket of vector fields turns~$\calX^\infty(M)$ into a~Lie algebra.
\end{Corollary}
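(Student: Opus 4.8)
The plan is to reduce the statement to the elementary algebraic fact that the derivations of a commutative $\IR$-algebra form a Lie algebra under the commutator bracket, and then to transport this structure to $\calX^\infty(M)$ by means of the bijection $\delta$ established in Theorem \ref{deri}.

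First I would verify the purely algebraic claim: if $D_1, D_2 \in \operatorname{Der}\big(\calC^\infty(M), \calC^\infty(M)\big)$, then the commutator $D_1 \circ D_2 - D_2 \circ D_1$ is again a derivation. Expanding $D_1(D_2(fg))$ and $D_2(D_1(fg))$ by Leibniz' rule and subtracting, the genuinely second-order terms assemble into $f \cdot (D_1 D_2 - D_2 D_1)(g) + g \cdot (D_1 D_2 - D_2 D_1)(f)$, while the remaining first-order cross terms cancel in pairs precisely because $\calC^\infty(M)$ is commutative. Applying this to $D_1 = \delta_V$ and $D_2 = \delta_W$, and observing that the map in the statement is exactly
\[
 f \longmapsto V(Wf) - W(Vf) = \delta_V(\delta_W f) - \delta_W(\delta_V f) = [\delta_V, \delta_W](f),
\]
shows that it is a derivation of $\calC^\infty(M)$. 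By the surjectivity part of Theorem \ref{deri} there is a vector field inducing this derivation, and by the injectivity part it is unique; this legitimizes denoting it $[V,W]$ and yields $\delta_{[V,W]} = [\delta_V, \delta_W]$ by construction.

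It then remains to check the Lie algebra axioms. Since $\delta$ is an $\IR$-linear bijection onto $\operatorname{Der}\big(\calC^\infty(M), \calC^\infty(M)\big)$ intertwining the two brackets, it suffices to note that the commutator bracket on the target already satisfies these axioms: $\IR$-bilinearity and antisymmetry are immediate, and the Jacobi identity is the standard consequence of the associativity of composition of linear endomorphisms. Transporting along $\delta^{-1}$ thus equips $\calX^\infty(M)$ with an $\IR$-bilinear, antisymmetric bracket satisfying Jacobi, so that $\delta$ becomes an isomorphism of Lie algebras.

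The heavy lifting has already been done in Theorem \ref{deri}, so I do not expect a genuine obstacle here; the only points requiring care are the bookkeeping of terms in the Leibniz expansion, where commutativity of $\calC^\infty(M)$ is essential, and the observation that the bracket is merely $\IR$-bilinear rather than $\calC^\infty(M)$-bilinear, which is why $\calX^\infty(M)$ carries the structure of an $\IR$-Lie algebra.
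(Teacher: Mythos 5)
Your proposal is correct and follows exactly the route the paper intends: the paper states this Corollary without proof, declaring it an immediate consequence of Theorem \ref{deri}, i.e.\ one checks that the commutator of two derivations of the commutative algebra $\calC^\infty(M)$ is again a derivation and then transports the Lie algebra structure on $\operatorname{Der}\big(\calC^\infty(M),\calC^\infty(M)\big)$ back to $\calX^\infty(M)$ via the bijection $\delta$. Your write-up simply makes explicit the standard algebraic verifications (Leibniz cancellation using commutativity, Jacobi from associativity of composition, uniqueness of the representing vector field from injectivity of $\delta$) that the paper leaves to the reader.
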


\begin{Remark}
 Unlike in the f\/inite dimensional case, smooth vector f\/ields on a prof\/inite dimensional manifold of inf\/inite dimension need not be integrable. See \cite[Section~3 \& Concluding comments]{chetverikov} and references therein for further information on this phenomenon and an integrability criterion in the particular case of the solution manifold of a formally integrable PDE.
\end{Remark}

\subsection{Dif\/ferential forms}
In the f\/inite dimensional case, dif\/ferential forms are usually def\/ined as smooth sections of alter\-nating powers of the cotangent bundle. This approach can not directly be carried over to the prof\/inite dimensional case, since there is no canonical construction of the cotangent bundle of a~prof\/inite dimensional manifold. The reason for this is that unlike for the tangent bundle functor, which is a covariant functor and transforms a smooth projective system $(M_i,\mu_{ij})$ into a~projective system $(TM_i,T\mu_{ij})$, forming the cotangent bundle is neither covariant nor contravariant functorial on the category of f\/inite dimensional smooth manifolds and smooth maps. The way out is to def\/ine dif\/ferential forms on prof\/inite dimensional manifolds as continuous maps on a~tensor product of the tangent bundle such that these maps are locally pull-backs of dif\/ferential forms on the components of a~smooth projective representation.

\begin{Definition}
Let $k\in \IN$ and $U \subset M$ open.
\begin{enumerate}\itemsep=0pt
\item[a)]
 A continuous map
\begin{gather*}
\omega\colon \ (\pi_{\IT^{k,0}M})^{-1} (U) \longrightarrow \IR
\end{gather*}
is called a \emph{differential form of order $k$} or a \emph{$k$-form} on $M$ over $U$, if for every point $p\in U$ there is some $i\in \IN$, an open subset $U_i\subset M_i$ with $p\in \mu_i^{-1}(U_i) \subset U$ and a $k$-form $\omega_i \in \Omega^k (U_i)$ such that
 \begin{gather*}
 \omega_{|( \mu_i \circ \pi_{\IT^{k,0}M})^{-1} (U_i) } = \omega_i \circ {\IT^{k,0}\mu_i}_{|( \mu_i \circ \pi_{\IT^{k,0}M})^{-1} (U_i) } .
 \end{gather*}
 More precisely, this means that for all $y \in \mu_i^{-1} (U_i) $, and $V_1, \ldots,V_k\in \pi^{-1} (y)\subset \IT M $ the relation
 \begin{gather*}
 \omega ( V_1 \otimes \cdots \otimes V_k ) = \omega_i \big( \IT \mu_i (V_1) \otimes \cdots \otimes \IT \mu_i (V_k)\big)
 \end{gather*}
holds true. In particular, a $k$-form $\omega$ over $U$ is antisymmetric and $k$-multilinear in its arguments. The space of $k$-forms over $U$ will be denoted by $\Omega^k (U)$.

\item[b)] A $k$-form $\omega \in \Omega^k (U)$ is called \emph{local}, if there is an open $U_i \subset M_i$ for some $i\in \IN$ and a~$k$-form $ \omega_i \in \Omega^k (U_i)$ such that $U \subset \mu_i^{-1} (U_i)$ and $\omega = (\mu_i^* \omega_i)_{|U}$, where here and from now on we use the notation $\mu_i^* \omega_i$ for the form $\omega_i \circ {\IT^{k,0} \mu_i}$. The space of local $k$-forms over~$U$ will be denoted by~$\lockOmega (U)$.
\end{enumerate}
 \end{Definition}

\begin{Remark}\quad{\samepage
\begin{enumerate}\itemsep=0pt
\item[a)] By a straightforward argument one checks that the spaces $\Omega^k (U)$ and $\lockOmega (U)$ only depend on the pfd structure $[(M_i,\mu_{ij},\mu_i)]$. Moreover, for every representative $(M_i,\mu_{ij},\mu_i)$ of the pfd structure, $\lockOmega (U)$ together with the family of pull-back maps $\mu_i^* \colon \Omega^k (\mu_i (U)) \rightarrow \lockOmega (U) $ is an inductive limit of the injective system of linear spaces
$\big( \Omega^k (\mu_i (U) ) , \mu_{ij}^* \big)_{i\in \IN}$.

\item[b)]
 By construction, it is clear that $\Omega^k$ forms a sheaf of $\mathscr{C}^{\infty}$-modules on~$M$ and $\lockOmega$ a presheaf of $\mathscr{C}^{\infty}_{\mathrm{loc}}$-modules. Moreover, $\Omega^k$ coincides with the sheaf associated to~$\lockOmega$.

\item[c)] The representative $\mathcal{M}:=(M_i,\mu_{ij},{\mu}_j)$ of the pfd structure on~$M$ leads to the particular f\/iltration $\mathcal{F}^{\mathcal{M}}_\bullet$ of the presheaf $\locOmega{k}$ of local $k$-forms on $M$ by putting, for $l\in \IN$,
 \begin{gather*}
 \mathcal{F}^{\mathcal{M}}_l\big(\locOmega{k}\big) := \mu_l^*\Omega^k_{M_l} .
 \end{gather*}
 Observe that this f\/iltration has the property that
 \begin{gather*}
 \locOmega{k} = \bigcup_{l\in\IN}\mathcal{F}^{\mathcal{M}}_l\big(\locOmega{k}\big).
 \end{gather*}
\end{enumerate}}
\end{Remark}

\begin{Propanddef}\quad
\begin{enumerate}\itemsep=0pt
\item[$a)$] There exists a uniquely determined morphism of sheaves $\Id\colon \Omega^k \rightarrow \Omega^{k+1}$ such that
\begin{gather*}
\Id ( \mu_i^* \omega_i ) = \mu_i^* ( \Id \omega_i )\qquad \text{for all} \quad i\in \IN, \quad U_i\subset M_i \ \text{open}, \quad \omega_i\in\Omega^k(U_i).
\end{gather*}
The morphism $\Id$ is called the \emph{exterior derivative}, fulfills $\Id \circ \Id =0$, and maps $\lockOmega$ to $\locOmega{k+1}$.

\item[$b)$] There exists a uniquely determined morphism of sheaves
 \begin{gather*}
\wedge\colon \ \Omega^k \times \Omega^l \longrightarrow \Omega^{k+l},
\end{gather*}
called the \emph{wedge product}, such that for all $i\in \IN$, $U_i\subset M_i$ open, $\omega_i\in\Omega^k(U_i)$, and $\mu_i\in\Omega^l(U_i)$ one has
 \begin{gather*}
 \mu_i^* \omega_i \wedge \mu_i^* \mu_i = \mu_i^* ( \omega_i \wedge \mu_i ).
 \end{gather*}
The wedge product also leaves $\locOmega{\bullet}$ invariant.

\item[$c)$] Given a vector field $V \in \calX^\infty (M)$, there exists the \emph{contraction with $V$} that means the sheaf morphism
 \begin{gather*}
 i_V \colon \ \Omega^k \longrightarrow \Omega^{k-1},
 \end{gather*}
 which is uniquely determined by the requirement that for all $\omega \in \Omega^k (U)$ with $U\subset M$ open, $p\in U$, and $W_1,\ldots , W_{k-1} \in \IT_pM$ the relation
 \begin{gather*}
 i_V (\omega) ( W_1 \otimes \cdots \otimes W_{k-1} ) = \omega \big( V(p)\otimes W_1 \otimes \dots \otimes W_{k-1}\big)
 \end{gather*}
 holds true. If $V$ is a local vector field, contraction with $V$ leaves $\locOmega{\bullet}$ invariant.
\end{enumerate}
\end{Propanddef}

\begin{proof}
 Using the sheaf property of $\Omega^k$ one can reduce the claims to local statements which are immediately proved.
\end{proof}

\section{Jet bundles and formal solutions of nonlinear PDEs}\label{fsnlpde}

The aim of this section is to develop a precise geometric notion of formally integrable (systems of) partial dif\/ferential equations, and to show that the formal solution spaces of these equations canonically become a prof\/inite dimensional manifold in the sense of Section~\ref{pfd}. Finally, we are going to give a criterion for the formal integrability of nonlinear scalar partial dif\/ferential equations, and apply this result to a class of interacting relativistic scalar f\/ield theories that arise in theoretical physics.

We refer the reader to \cite{krasvino,gia,KraLycVinGJSNPDE, saunders} and also to~\cite{pomm,seiler} for introductionary texts on jet bundles, where the latter two references have a strong focus on the highly nontrivial algorithmic aspects of this theory. A nice short overview is also included in the introduction of~\cite{vita}.

\subsection{Finite order jet bundles}\label{finjet}

For the rest of the paper, we f\/ix a f\/iber bundle $\pi\colon E\to X$. Moreover, $F$ will denote the typical f\/iber of $\pi$ and we set $m:=\dim X$, $n:=\dim F$.

Then one has $\dim E = m+n$ and the f\/ibers $\pi^{-1}(p)\subset E $ become $n$-dimensional submanifolds, which are dif\/feomorphic to~$F$. There are distinguished charts for $E$:

\begin{Definition} A manifold chart $(x,u)\colon W\to \IR^m\times \IR^n$ of $E$ def\/ined over some open $W\subset E$ is called a \emph{fibered} \emph{chart of $\pi$}, if for all $e,e'\in W$ with $\pi(e)=\pi(e')$ the equality $x(e)=x(e')$ holds true.
\end{Definition}

\begin{Remark}\label{ReCha}\quad
\begin{enumerate}\itemsep=0pt
\item[a)] Sometimes, f\/ibered charts are called \emph{adapted charts}.

\item[b)] Note that a f\/ibered chart $(x,u)\colon W\to \IR^m\times \IR^n$ for $\pi$ canonically gives rise to a well-def\/ined manifold chart on $X$. It is given by
\begin{gather}\label{st}
\tilde{x}\colon \ \pi(W)\longrightarrow \IR^m,\qquad p\longmapsto x(e),
\end{gather}
where $e\in W\cap \pi^{-1}(p)$ is arbitrary.

\item[c)] On the other hand, a manifold atlas for $E$ that consists of f\/ibered charts for $\pi$ can be constructed from manifold charts for $X$ and from the local triviality of $E$ as follows: For an arbitrary $e\in E$, take a bundle chart $\phi\colon \pi^{-1}(U)\to U \times F$ around $\pi(e)$, that is, $U$ is an open neighbourhood of $\pi(e)$ and $\phi\colon \pi^{-1}(U)\to U \times F$ is a dif\/feomorphism such that
\begin{gather}\label{dia}
\begin{split}&
\xymatrix{
\pi^{-1}(U) \ar[d]_{\pi} \ar[r]^{\phi} & U\times B \ar[dl]^{\mathrm{pr}_1}\\
U
}\end{split}
\end{gather}
commutes. Let $\tilde{x}\colon U\to\IR^m$ be a manifold chart of $X$ (here we assume that $U$ is small enough), and let $\tilde{u}\colon B\to\IR^n$ be a~manifold chart of $F$. Then
\begin{gather*}
(\tilde{x}\circ \pi,\tilde{u}\circ\mathrm{pr}_2\circ \phi)=(\tilde{x}\circ \mathrm{pr}_1\circ \phi,\tilde{u}\circ\mathrm{pr}_2\circ \phi)\colon \ \pi^{-1}(U)\longrightarrow \IR^m\times \IR^n
\end{gather*}
is a f\/ibered chart of $\pi$. Note here that by the commutativity of~(\ref{dia}), the notation ``$\tilde{x}$'' is consistent with~(\ref{st}).
\end{enumerate}
\end{Remark}

Let us introduce the following notation for multi-indices, which will be convenient in the following: For any $k_1,k_2\in\IN$ with $k_1\leq k_2$ let $\IN^m_{k_1,k_2}$ denote the set of all multi-indices $I\in \IN^m$ such that
\begin{gather*}
k_1\leq |I|:=\sum^m_{j=1}I_j\leq k_2
\end{gather*}
and let $\jetfib(m,k_1,k_2)$ denote the linear space of all maps $\IN^m_{k_1,k_2}\to \IR$. For any $l\leq m$, $i_1,\dots,i_l\in \{1,\dots,m\}$, the symbol $1_{i_1\dots i_l}\in\IN^m$ will denote the multi-index which has a $1$ in its $i_j$'s slot for $j=1,\dots,l$, and a $0$ elsewhere.

Any $\psi \in \Gamma^{\infty} (p;\pi)$ allows the following local description: Choose a f\/ibered chart $(x,u)\colon W\to\IR^m\times\IR^n$ of $\pi$ with $W\cap \pi^{-1}(p)\ne \varnothing$. Then one has $x\circ \psi = \tilde{x}$ near $p$, so that $\psi$ is determined by the coordinates $(u^1\circ \psi,\dots,u^n\circ \psi)=u\circ \psi$ near $p$. The special form of the following def\/inition is motivated by the latter fact:

\begin{Definition}\label{equiv}
Let $p\in X$, $k\in\IN$. Any two $\psi, \varphi\in\Gamma^{\infty} (p;\pi)$ are called \emph{$k$-equivalent at $p$}, if for every f\/ibered chart $(x,u)\colon W\to\IR^m\times\IR^n$ of $\pi$ with $W\cap \pi^{-1}(p)\ne \varnothing$ one has
\begin{gather}\label{equi}
\f{\partial^{|I|} \left(u^{\alpha}\circ\psi\right)}{\partial \tilde{x}^I} (p)=\f{\partial^{|I|} \left(u^{\alpha}\circ\varphi\right)}{\partial\tilde{x}^I} (p)
\end{gather}
for all $\alpha=1,\dots,n$ and all $I\in \IN^m_{0,k}$. The corresponding equivalence class $\jet^k_p \psi$ of $\psi$ is called the \emph{$k$-jet of~$\psi$ at~$p$.}
\end{Definition}

\begin{Remark}\label{fadibru}
In fact, it is enough to check (\ref{equi}) in \emph{some} f\/ibered chart. This can be proved by induction on~$k$, using the multivariate version of Faa di Bruno's formula \cite[Theorem~2.1]{ConSavMFBFA} (details can be found in Lemma~6.2.1 in~\cite{saunders}).
\end{Remark}

Let us now come to several structures that can be def\/ined via jets. Denoting by
\begin{gather*}
 \Jet^k(\pi):= \bigcup_{p\in X}\big\{ \jet^k_p\psi\,|\, \psi \in\Gamma^{\infty} (p;\pi) \big\}
\end{gather*}
the collection of all $k$-jets in $\pi$, we obtain the surjective maps
\begin{gather*}
\pi_k\colon \ \Jet^k(\pi)\longrightarrow X,\qquad \jet^k_p\psi\longmapsto p ,\qquad
\pi_{0,k}\colon \ \Jet^k(\pi)\longrightarrow E,\qquad \jet^k_p\psi\longmapsto \psi(p).
\end{gather*}
Using these maps, one can give $\Jet^k(\pi)$ the structure of a f\/inite dimensional manifold in a canonical way: For every f\/ibered chart $(x, u)\colon W\to\IR^m\times\IR^n$ of $\pi$ and every $I\in\IN^m_{0,k}$, one def\/ines the map
\begin{align}
(x_k,u_{k,I}) \colon \ & \pi_{0,k}^{-1}(W)\longrightarrow \IR^m\times \IR^{n \dim \jetfib(m,0,k)}, \nonumber\\
&\jet^k_p\psi\longmapsto \left(\tilde{x}(p),\f{\partial^{|I|} (u^{1}\circ\psi)}{\partial \tilde{x}^I}(p),\dots,\f{\partial^{|I|}\left(u^{n}\circ\psi\right)}{\partial \tilde{x}^I}(p)\right).\label{mf1}
\end{align}

The following result is checked straightforwardly (cf.~\cite{saunders}).

\begin{Propanddef}\label{jetmf} The maps \eqref{mf1} define an $m+n \dim \jetfib(m,0,k)$-dimensional manifold structure on $\Jet^k(\pi)$. In view of this fact, $\Jet^k(\pi)$ is called the \emph{$k$-jet manifold} corresponding to~$\pi$.
\end{Propanddef}

For convenience, we set $\Jet^0(\pi):=E$ and $\jet^0_p\psi:=\psi(p)$ for any $\psi\in\Gamma^{\infty} (p;\pi)$, and $\pi_0:=\pi$. More generally, we have for any $ k_1\leq k_2$ the smooth surjective maps
\begin{gather*}
 \pi_{k_1,k_2}\colon \ \Jet^{k_2}(\pi)\longrightarrow \Jet^{k_1}(\pi),\qquad \jet^{k_2}_p\psi\longmapsto \jet^{k_1}_p\psi,
\end{gather*}
which satisfy $\pi_{k,k}=\mathrm{id}_{\Jet^k(\pi)}$, and if one also has $k_2\leq k_3$, then the following diagram commutes:
\begin{gather}\label{proj}
\begin{split}&
\begin{xy}
 \xymatrix{
 \Jet^{k_3}(\pi) \ar[rr]^{\pi_{k_2,k_3}} \ar[dd]_{\pi_{k_1,k_3}} & & \Jet^{k_2}(\pi)\ar[ddll]_{\pi_{k_1,k_2}}\ar[dd]^{\pi_{k_2}} \\ &&\\
 \Jet^{k_1}(\pi) \ar[rr]^{\pi_{k_1}} & & X
 }
\end{xy}
\end{split}
\end{gather}

Let us collect all structures underlying the above maps. Let $(x,u)\colon W\to\IR^m\times\IR^n$ be a~f\/ibered chart
of $\pi$. Then we set
\begin{gather}
\begin{split}
\left(\pi_k,u_k\right)\colon \ & \pi^{-1}_{k,0}(W)\longrightarrow \pi(W)\times \jetfib(m,0,k)^n,\\
&\jet^k_p\psi \longmapsto \left(p,\left\{\f{\partial^{|I|} \left(u\circ\psi\right)}{\partial \tilde{x}^I}(p)\right\}_{I\in\IN^m_{0,k}}\right)\\
& \hphantom{\jet^k_p\psi \longmapsto}{} =\left(p,\left\{\f{\partial^{|I|} (u^{1}\circ\psi)}{\partial \tilde{x}^I}(p)\right\}_{I\in\IN^m_{0,k}},\dots,\left\{\f{\partial^{|I|} \left(u^{n}\circ\psi\right)}{\partial\tilde{x}^I}(p)\right\}_{I\in\IN^m_{0,k}}\right),
\end{split}\label{co}
\\
\begin{split}
\left(\pi_{k_1,k_2},u_{k_1,k_2}\right)\colon \ & \pi^{-1}_{k_2,0}(W)\longrightarrow \pi^{-1}_{k_1,0}(W)\times \jetfib(m,k_1+1,k_2)^n,\\
&\jet^{k_2}_p\psi \longmapsto \left(\jet^{k_1}_p\psi,\left\{\f{\partial^{|I|} (u\circ\psi)}{\partial \tilde{x}^I}(p)\right\}_{I\in\IN^m_{k_1+1,k_2}}\right).
\end{split}\label{sds}
\end{gather}
If $\pi$ is a vector bundle, then, for every $p\in X$, the f\/iber $\pi_k^{-1}(p)$ canonically becomes a linear space through
\begin{gather*}
c_1 \big(\jet^k_p\psi\big)+c_2 \big(\jet^k_{p}\varphi\big):=\jet^k_p(c_1\psi+c_2\varphi),\qquad c_j\in\IR.
\end{gather*}
Furthermore, if $k_2=k$, $k_1=k-1$ and if $a\in \Jet^{k-1}(\pi)$, then the f\/iber $\pi_{k-1,k}^{-1}(a)$ carries a~canonical af\/f\/ine structure which is modelled on the linear space
\begin{gather}\label{vec}
 \Sym^{k}\big(\IT^*_{\pi_{k-1}(a)}X\big)\otimes \operatorname{ker}\big( {\IT\pi}_{|\pi_{0,k-1}(a)} \big).
\end{gather}
To see the latter fact, assume that $\pi_{0,k-1}(a)\in W$, let $\jet^k_{\pi_{k-1}(a)}\psi\in\pi_{k-1,k}^{-1}(a)$ and let $v$ be an element of~(\ref{vec}). Then $v$ can be uniquely expanded as
\begin{gather*}
 v=\sum_{I\in\IN^m_{k,k}}\sum^n_{\alpha=1}v^{\alpha}_I {\Id\tilde{x}_{I}}_{|\pi_{k-1}(a)}\otimes
 \f{\partial}{\partial{u}^{\alpha}}_{|\pi_{0,k-1}(a)},\qquad v^{\alpha}_I\in\IR,
\end{gather*}
where we have used the abbreviation
\begin{gather*}
\Id \tilde{x}_{I}:=(\Id \tilde{x}_{1})^{\otimes I_1}\odot \cdots\odot (\Id \tilde{x}_{m})^{\otimes I_m},
\end{gather*}
so that one can def\/ine $\jet^k_{\pi_{k-1}(a)}\psi+v\in \pi_{k-1,k}^{-1}(a)$ to be the uniquely determined element whose image under~(\ref{sds}) is given by
\begin{gather*}
\left(\jet^{k-1}_{\pi_{k-1}(a)}\psi,\left\{\f{\partial^{|I|} \left(u\circ\psi\right)}{\partial \tilde{x}^I}_{|\pi_{k-1}(a)}+v_I\right\}_{I\in\IN^m_{k,k}}\right).
\end{gather*}

With these preparations, one has:

\begin{Lemma}\label{struc}
Let $k,k_1,k_2\in\IN$ with $k_1\leq k_2$. Then the following assertions hold.
\begin{enumerate}\itemsep=0pt
\item[$a)$] The maps \eqref{co} turn $\pi_k\colon \Jet^k(\pi)\to X$ into a fiber bundle with typical fiber $\jetfib(m,0,k)^n$. If $\pi$ is a vector bundle, then so is $\pi_k$.
\item[$b)$] The maps \eqref{sds} turn $\pi_{k_1,k_2}\colon \Jet^{k_2}(\pi)\to \Jet^{k_1}(\pi)$ into a fiber bundle with typical fiber $\jetfib(m,l+1,k)^n$, and $\pi_{k-1,k}\colon \Jet^{k}(\pi)\to \Jet^{k-1}(\pi)$ becomes an affine bundle, modelled on the vector bundle
\begin{gather*}
 \pi^*_{k-1}\Sym^k\big( \pi_{\IT^*X} \big) \otimes \pi^*_{k-1,0}\Verti(\pi)\longrightarrow \Jet^{k-1}(\pi).
\end{gather*}
\end{enumerate}
\end{Lemma}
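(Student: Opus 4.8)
The plan is to read off both bundle structures directly from the coordinate maps \eqref{co} and \eqref{sds}, and then to upgrade them to the linear and affine refinements by analysing the associated transition maps. First I would note that, by Remark~\ref{cha}~c), the domains $\pi_{k,0}^{-1}(W)$, with $(x,u)\colon W\to\IR^m\times\IR^n$ ranging over a fibered atlas of $\pi$, cover $\Jet^k(\pi)$, and that the corresponding sets $\pi(W)$ cover $X$. By Proposition and Definition~\ref{jetmf}, the map $(\pi_k,u_k)$ of \eqref{co} is a diffeomorphism of $\pi_{k,0}^{-1}(W)$ onto $\pi(W)\times\jetfib(m,0,k)^n$, and its first component is $\pi_k$, so it intertwines the projections to $X$. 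Thus \eqref{co} furnishes a family of local trivialisations covering $X$, which is exactly what is required to conclude that $\pi_k$ is a fiber bundle with typical fiber $\jetfib(m,0,k)^n$. The same reasoning applied to \eqref{sds}, whose first component is $\pi_{k_1,k_2}$, shows that $\pi_{k_1,k_2}$ is a fiber bundle over $\Jet^{k_1}(\pi)$ with typical fiber $\jetfib(m,k_1+1,k_2)^n$.

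For the refinements I would examine the transition maps between two overlapping trivialisations coming from fibered charts $(x,u)$ and $(x',u')$. In the vector bundle case of~a) I would take the fibered charts to be linear along the fibers of $\pi$, so that ${u'}^{\alpha}\circ\psi$ is a linear combination, with $x$-dependent coefficients, of the $u^{\beta}\circ\psi$; by the Leibniz rule each jet coordinate $\partial^{|I|}({u'}^{\alpha}\circ\psi)/\partial{\tilde{x}'}^{I}$ is then a linear combination of the coordinates $\partial^{|J|}(u^{\beta}\circ\psi)/\partial\tilde{x}^{J}$, so the transition maps are fiberwise linear and \eqref{co} becomes a vector bundle atlas; equivalently, the operation $c_1(\jet^k_p\psi)+c_2(\jet^k_p\varphi):=\jet^k_p(c_1\psi+c_2\varphi)$ is well defined and is carried by \eqref{co} to the obvious linear structure on $\jetfib(m,0,k)^n$. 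For the affine assertion in~b) I would specialise \eqref{sds} to $k_1=k-1$, $k_2=k$, where the fiber coordinate records precisely the derivatives of order $k$, so that the translation $\jet^k_{\pi_{k-1}(a)}\psi+v$ defined just before the Lemma corresponds, in this trivialisation, to adding the coordinate vector $(v^{\alpha}_I)_{I\in\IN^m_{k,k}}$. Hence in these coordinates the fiberwise affine structure is the standard affine structure of the vector space $\jetfib(m,k,k)^n$, which under the fibered chart is canonically identified with the fiber $\Sym^k(\IT^*_{\pi_{k-1}(a)}X)\otimes\ker(\IT\pi_{|\pi_{0,k-1}(a)})$ of the asserted model bundle, cf.~\eqref{vec}.

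It then remains to verify that this fiberwise affine structure is independent of the chosen fibered chart and that its linear part is the transition cocycle of $\pi^*_{k-1}\Sym^k(\pi_{\IT^*X})\otimes\pi^*_{k-1,0}\Verti(\pi)$. The decisive step, which I expect to be the main obstacle, is the transformation law for the top-order coordinates: using the multivariate Faa di Bruno formula invoked in Remark~\ref{fadibru}, one finds that $\partial^{k}({u'}^{\alpha}\circ\psi)/\partial{\tilde{x}'}^{I}$ equals a linear expression in the top-order coordinates $\partial^{k}(u^{\beta}\circ\psi)/\partial\tilde{x}^{J}$ plus a remainder assembled only from derivatives of $u\circ\psi$ of order strictly below $k$ together with the transition data $\partial\tilde{x}/\partial\tilde{x}'$ and $\partial u'/\partial u$. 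The linear part is exactly the action of the Jacobian cocycle on $\Sym^k(\IT^*X)\otimes\Verti(\pi)$, while the remainder depends only on the $(k-1)$-jet, that is, only on the base point $a\in\Jet^{k-1}(\pi)$; this is precisely the statement that the change of trivialisation is an affine bundle isomorphism with the prescribed linear part. Once this transformation law is established, the affine bundle structure on $\pi_{k-1,k}$ modelled on the stated vector bundle follows, completing the proof.
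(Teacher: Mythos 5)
Your overall strategy---local trivialisations read off from the jet coordinates, transitions controlled by the Leibniz rule and Faa di Bruno---is the standard one; note that the paper itself offers no argument at all for this lemma but simply defers to Chapter 6 of \cite{saunders}, and your treatment of part b) and of the two refinements is essentially the argument found there. In particular, for \eqref{sds} the chart maps really are fiber bundle trivialisations, linear fibered charts give linear transitions when $\pi$ is a vector bundle, and the top-order transformation law (linear part the $\Sym^k$-Jacobian cocycle tensored with the vertical transition, remainder depending only on the $(k-1)$-jet) is exactly what establishes the affine bundle structure; you assert rather than carry out that computation, but the outline is sound.

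There is, however, a genuine gap in your part a). You claim that \eqref{co} is a diffeomorphism of $\pi_{k,0}^{-1}(W)$ onto $\pi(W)\times\jetfib(m,0,k)^n$ and hence a local trivialisation of $\pi_k$ over $\pi(W)$. Neither claim is correct: the domain $\pi_{k,0}^{-1}(W)$ consists only of those jets $\jet^k_p\psi$ with $\psi(p)\in W$, so it is in general a \emph{proper} open subset of $\pi_k^{-1}\big(\pi(W)\big)$ and cannot trivialise $\pi_k$ over $\pi(W)$; moreover the image misses every point whose zeroth-order coordinates lie outside $u\big(W\cap\pi^{-1}(p)\big)$, so Proposition \ref{jetmf} only yields a chart, not a surjection onto the product. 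Your ``same reasoning'' does work for \eqref{sds}, but for a reason your write-up never isolates: there the domain $\pi_{k_2,0}^{-1}(W)$ \emph{is} the full preimage $\pi_{k_1,k_2}^{-1}\big(\pi_{k_1,0}^{-1}(W)\big)$, since $\pi_{0,k_2}=\pi_{0,k_1}\circ\pi_{k_1,k_2}$, and the coordinates of order $\geq k_1+1$ are unconstrained. This gap cannot be patched, because the first sentence of a) is false as literally stated for a general typical fiber: for $\pi=X\times S^1\to X$ and $k=1$, the fiber of $\pi_1$ over $p$ consists of the pairs $(q,\lambda)$ with $q\in S^1$ and $\lambda\in \IT^*_pX\otimes \IT_qS^1$, hence is diffeomorphic to $S^1\times\IR^m$, which is not homeomorphic to $\jetfib(m,0,1)^1\cong\IR^{m+1}$. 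What your method does prove is that $\pi_k$ is a fiber bundle whose typical fiber is the $k$-jet space at the origin of the trivial bundle $\IR^m\times F\to\IR^m$ (a bundle over $F$, obtained by composing $\pi$ with the affine bundles $\pi_{j-1,j}$), and that this fiber is $\jetfib(m,0,k)^n$ precisely in the situation of the second sentence of a): when $\pi$ is a vector bundle one may take $W=\pi^{-1}(U)$ with $u$ linear on the fibers, the domain of \eqref{co} is then the full preimage $\pi_k^{-1}(U)$, and your Leibniz argument upgrades these genuine trivialisations to a vector bundle atlas.
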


\begin{proof}
 The reader can f\/ind a detailed proof of Lemma~\ref{struc} in Chapter~6 of~\cite{saunders}.
\end{proof}

We close this section with a simple observation about distinguished elements of $\Gamma^{\infty}(\pi_k)$. Let $U\subset X$ be an open subset for the moment. Then for any $\psi\in \Gamma^{\infty}(U;\pi)$, the map $U\to \Jet^k(\pi)$, $p\mapsto \jet^k_p \psi$, def\/ines an element of $\Gamma^{\infty}(U;\pi_k)$, called the \emph{$k$-jet prolongation of $\psi$.} In fact, this construction induces a morphism of sheaves $\jet^k\colon \Gamma^{\infty}(\pi)\rightarrow \Gamma^{\infty}(\pi_k)$ (with values in the category of sets) such that
\begin{gather}\label{pr1}
 \pi_{k_1,k_2}\circ \jet^{k_2} =\jet^{k_1}\qquad \text{for} \quad k_1\leq k_2.
\end{gather}
It should be noted that it is not possible to write an arbitrary element of $\Gamma^{\infty}(U;\pi_k)$ as $\jet^k_U\psi$ for some $\psi\in \Gamma^{\infty}(U;\pi)$. The elements of $\Gamma^{\infty}(U;\pi_k)$ having the latter property are called \emph{projectable}. This notion is motivated by the following simple observation which follows readily from~(\ref{pr1}).

\begin{Lemma}\label{pro}
Let $U\subset X$ be an open subset and $\Psi\in\Gamma^{\infty}(U;\pi_k)$. Then the map $p\mapsto \pi_{0,k}(\Psi(p))$ defines an element of $\Gamma^{\infty}(U;\pi)$, and $\Psi$ is projectable, if and only if one has $\jet^k(\pi_{0,k}\circ\Psi)=\Psi$.
 \end{Lemma}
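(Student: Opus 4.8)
The plan is to unwind the definitions and use the prolongation relation \eqref{pr1} together with the compatibility $\pi_{0,0}=\id$. Write $\Psi\in\Gamma^\infty(U;\pi_k)$ and set $\psi:=\pi_{0,k}\circ\Psi$. The first claim, that $\psi\in\Gamma^\infty(U;\pi)$, is immediate from the fact that $\pi_{0,k}:\Jet^k(\pi)\to E$ is smooth and $\pi_k=\pi_0\circ\pi_{0,k}=\pi\circ\pi_{0,k}$ (reading off from the commuting diagram \eqref{proj} with $k_1=0$), so $\pi\circ\psi=\pi_k\circ\Psi=\id_U$; hence $\psi$ is a smooth local section of $\pi$.

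For the equivalence, the backward direction is trivial: if $\jet^k(\pi_{0,k}\circ\Psi)=\Psi$, then $\Psi=\jet^k\psi$ with $\psi\in\Gamma^\infty(U;\pi)$, which is exactly the definition of $\Psi$ being projectable. The forward direction is the substantive one. Suppose $\Psi$ is projectable, so there exists some $\varphi\in\Gamma^\infty(U;\pi)$ with $\Psi=\jet^k\varphi$. First I would identify $\varphi$ with $\psi$: applying $\pi_{0,k}$ to $\Psi=\jet^k\varphi$ and using that $\pi_{0,k}\circ\jet^k=\jet^0=\id$ on the level of sections (since the $0$-jet of $\varphi$ at $p$ is just $\varphi(p)$, by the convention $\jet^0_p\varphi:=\varphi(p)$ and $\pi_{0,k}(\jet^k_p\varphi)=\jet^0_p\varphi$), one gets $\psi=\pi_{0,k}\circ\Psi=\pi_{0,k}\circ\jet^k\varphi=\varphi$. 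Therefore $\Psi=\jet^k\varphi=\jet^k\psi=\jet^k(\pi_{0,k}\circ\Psi)$, which is the desired identity.

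The only point requiring care, and the one I would expect to be the main (albeit mild) obstacle, is the interplay between the sheaf morphism $\jet^k$ acting on sections and the pointwise jet operation: one must be precise that $\pi_{0,k}(\jet^k_p\varphi)=\varphi(p)$ for every $p\in U$, so that $\pi_{0,k}\circ\jet^k\varphi=\varphi$ as sections, and that the projection $\pi_{0,k}$ recovers the same underlying section regardless of which prolonging $\varphi$ one starts with. This uniqueness is precisely what forces $\varphi=\psi$ and removes any ambiguity. Once that identification is in place the argument closes immediately, so the proof is essentially a formal manipulation of \eqref{pr1} and the definitions of $\pi_{0,k}$ and $\jet^k$, with no analytic input required.
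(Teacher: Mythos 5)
Your proposal is correct and follows exactly the route the paper intends: the paper gives no written proof beyond remarking that the lemma ``follows readily from (\ref{pr1})'', and your argument is precisely that unwinding, using $\pi\circ\pi_{0,k}=\pi_k$ from \eqref{proj} for smoothness of the projected section and the case $k_1=0$, $k_2=k$ of \eqref{pr1} (with the convention $\jet^0_p\varphi=\varphi(p)$) to identify any prolonging section $\varphi$ with $\pi_{0,k}\circ\Psi$. Nothing is missing; the identification $\varphi=\pi_{0,k}\circ\jet^k\varphi$ that you flag as the key point is indeed the whole content of the lemma.
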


\subsection{Partial dif\/ferential equations}\label{de}

The aim of this section is to give a precise global def\/inition of partial dif\/ferential equations and the solutions thereof in the setting of arbitrary f\/iber bundles. We shall f\/irst consider the general (possibly nonlinear) situation in Section~\ref{gene}. Then, in Section~\ref{lini}, we are going to relate everything with the corresponding classical linear concepts.

Throughout this section, let $\underline{\pi}\colon \underline{E}\to X$ be a second f\/iber bundle, with typical f\/iber $\underline{F}$ and f\/iber dimension~$\underline{n}$.

\subsubsection{General facts}\label{gene}

\begin{Definition}\label{dgl}\quad
\begin{enumerate}\itemsep=0pt
\item[a)] A subset $\IEE\subset \Jet^k(\pi)$ is called a~\emph{partial differential equation on~$\pi$ of order $\leq k$}, if ${\pi_k}_{|\IEE}\colon \IEE\to X$ is a f\/ibered submanifold of $\pi_k$.
\item[b)] Let $\IEE\subset \Jet^k(\pi)$ be a partial dif\/ferential equation on $\pi$ of order $\leq k$. Some $\psi\in\Gamma^{\infty} (p;\pi)$ is called a \emph{solution of $\IEE$ in $p$}, if $\jet^k_p\psi\in \IEE$. For an open $U\subset X$, a section $\psi\in\Gamma^{\infty}(U;\pi)$ will simply be called a \emph{solution} of $\IEE$, if $\psi$ is a solution of $\IEE$ in $p$ for every $p\in U$, that is, if $\mathrm{im}(\jet^k\psi)\subset \IEE$.
\end{enumerate}
\end{Definition}

\begin{Remark}\quad
\begin{enumerate}\itemsep=0pt
\item[a)] The def\/inition of a PDE we present here appears to go back to Gold\-schmidt~\cite{gold} and is now widely used in the geometric PDE literature \cite{lewis,pomm,pommPDEGT,reyes,sar,seiler}. One of its virtues is that it allows to clearly separate and globalize the notions ``partial dif\/ferential equation'', ``solution of a partial dif\/ferential equation'' and ``partial dif\/ferential operator''.
\item[b)] By the Cartan--Kuranishi prolongation theorem~\cite{kuranishi}, an analytic PDE either has no solutions or becomes involutive after f\/initely many prolongations, which means that it admits integral manifolds as stated in the Cartan--K\"ahler theorem~\cite{bryant}. In Goldschmidt's work, this observation is always in the background. Since we are mainly interested in the inf\/inite prolongation or in other words the formal solution space of a PDE as in Proposition and Def\/inition~\ref{defS}, Goldschmidt's approach f\/its perfectly for our purposes. For a slightly dif\/ferent general setup of non-linear PDEs of various orders (and their symbols) see~\cite{krug3}.
\item[c)] There exist non-f\/ibered submanifolds $E\subset \Jet^k(\pi)$ which one could refer to as ``partial dif\/ferential equations'' as well. The problem with this more general concept of a PDE though is that then independent variables might not really be independent anymore, cf.~\cite[Section~2.3]{seiler}. Moreover, only the f\/ibered variant leads in general to prof\/inite dimensional manifolds (cf.~Proposition~\ref{defS} below).
\end{enumerate}
\end{Remark}

\begin{Definition}\label{diffop} A morphism $h\colon \Jet^k(\pi)\to \underline{E}$ of f\/ibered manifolds over $X$ is called a \emph{partial differential operator of order $\leq k$ from $\pi$ to $\underline{\pi}$.}
\end{Definition}

Of course, the notion ``operator'' in Def\/inition \ref{diffop} is justif\/ied by the fact that as a morphism of f\/ibered manifolds, any $h$ as in Def\/inition~\ref{diffop} induces the morphism of set theoretic sheaves
\begin{gather*}
P^h:= h\circ \jet^k\colon \ \Gamma^{\infty}(\pi)\longrightarrow \Gamma^{\infty}(\underline{\pi}).
\end{gather*}
We def\/ine $\mathrm{D}^k(\pi,\underline{\pi})$ to be the set of all partial dif\/ferential operators of order $\leq k$ from $\pi$ to $\underline{\pi}$, and remark that the assignment $h\mapsto P^h$ induces an injection $P^{\bullet}$ of $\mathrm{D}^k(\pi,\underline{\pi})$ into the set theoretic sheaf morphisms $\Gamma^{\infty}(\pi)\to\Gamma^{\infty}(\underline{\pi})$. The connection between partial dif\/ferential operators and partial dif\/ferential equations is given in this abstract setting as follows: For every $h\in \mathrm{D}^k(\pi,\underline{\pi})$ and $O\in\Gamma^{\infty}(X;\underline{\pi})$, the set $\mathrm{Z}_{O}(h)$ is def\/ined by $\mathrm{Z}_{O}(h):=h^{-1}(\mathrm{im}(O))\subset \Jet^k(\pi)$, with the usual convention $\ker(h):=\mathrm{Z}_{O}(h)$ if $\pi$ and $\underline{\pi}$ are vector bundles and $h$ is linear. Observe that one has by def\/inition
\begin{gather*}
\mathrm{Z}_{O}(h)= \big\{a\in \Jet^k(\pi) \,|\, h(a)=O(\pi_k(a)) \big\}.
\end{gather*}

The following fact is well-known:

\begin{Proposition}\label{diffop2} If $h\in \mathrm{D}^k(\pi,\underline{\pi})$ has constant rank, and if $O\in\Gamma^{\infty}(X;\underline{\pi})$ fulfills $\mathrm{im}(O)\subset \mathrm{im}(h)$, then $\mathrm{Z}_{O}(h)\subset \Jet^k(\pi)$ is a partial differential equation.
\end{Proposition}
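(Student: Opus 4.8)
The plan is to reduce the claim to a purely local statement around an arbitrary point of $\ker_O(h)$ and then to invoke the constant rank theorem. Fix $a_0\in\ker_O(h)$, set $p_0:=\pi_k(a_0)$ and $q_0:=h(a_0)=O(p_0)$. Since $O\in\Gamma^\infty(X;\underline{\pi})$ is a global section, its image $\mathrm{im}(O)$ is an embedded submanifold of $\underline{E}$, diffeomorphic to $X$ via $\underline{\pi}$ and transverse to every fibre of $\underline{\pi}$. Because $h$ is a morphism of fibered manifolds over $X$, it satisfies $\underline{\pi}\circ h=\pi_k$, so that $h$ maps $\pi_k^{-1}(p)$ into $\underline{\pi}^{-1}(p)$; in particular $\ker_O(h)=\{a\in\Jet^k(\pi)\mid h(a)=O(\pi_k(a))\}$ meets each fibre $\pi_k^{-1}(p)$ in the set $(h|_{\pi_k^{-1}(p)})^{-1}(O(p))$. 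The target is to show that $\ker_O(h)$ is a fibered submanifold of $\pi_k$ in the sense of Definition \ref{dgl} a).

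Next I would use the constant rank hypothesis to factor $h$ locally. By the rank theorem there is an open neighbourhood $U$ of $a_0$ on which $h$ factors as $h|_U=\iota\circ\sigma$, where $\sigma\colon U\to\Sigma$ is a surjective submersion onto the embedded $r$-dimensional submanifold $\Sigma:=h(U)$ of $\underline{E}$ and $\iota\colon\Sigma\hookrightarrow\underline{E}$ is the inclusion. Consequently $\ker_O(h)\cap U=\sigma^{-1}\big(\Sigma\cap\mathrm{im}(O)\big)$. Since $\sigma$ is a submersion, the preimage under $\sigma$ of an embedded submanifold of $\Sigma$ is again an embedded submanifold of $U$; hence the whole problem is reduced to showing that $\Sigma\cap\mathrm{im}(O)$ is an embedded submanifold of $\Sigma$ near $q_0$ and to controlling its dimension. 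I would also record here that $\underline{\pi}|_\Sigma\colon\Sigma\to X$ is a submersion: this follows from $\underline{\pi}\circ\iota\circ\sigma=\pi_k|_U$ together with the fact that $\pi_k|_U$ and $\sigma$ are submersions.

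The main obstacle is precisely the analysis of $\Sigma\cap\mathrm{im}(O)$, and this is where the hypothesis $\mathrm{im}(O)\subseteq\mathrm{im}(h)$ must be exploited. The point is to show that, after shrinking $U$, the \emph{local} image slice $\Sigma=h(U)$ already contains the section image near $q_0$, i.e.\ that $\mathrm{im}(O)\subseteq\Sigma$ in a neighbourhood of $q_0$; granting this, $\Sigma\cap\mathrm{im}(O)$ coincides locally with the embedded submanifold $\mathrm{im}(O)$, so $\ker_O(h)\cap U=\sigma^{-1}(\mathrm{im}(O))$ is an embedded submanifold, and $\pi_k|_{\ker_O(h)}$, being on $\ker_O(h)\cap U$ the composite of $\sigma$ (onto $\mathrm{im}(O)$) with the diffeomorphism $\underline{\pi}|_{\mathrm{im}(O)}$, is a submersion onto $X$ --- which is exactly the fibered submanifold property required. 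The delicate issue is that $\mathrm{im}(O)\subseteq\mathrm{im}(h)$ is a statement about the \emph{global} image, whereas $\Sigma$ is only the image of the small chart $U$, so the containment does not localize for free. To bridge this gap I would pass to fibered charts in which $\pi_k$ and $\underline{\pi}$ are the projections onto the $X$-coordinates, write $h(x,z)=(x,\bar h(x,z))$ and, after a fibre-preserving change of coordinates, $\mathrm{im}(O)=\{w=0\}$; then constant rank of $h$ is equivalent to constant rank $\rho:=r-\dim X$ of $\partial_z\bar h$, and $\ker_O(h)=\{\bar h=0\}$. I would solve the independent components of $\bar h=0$ near $a_0$ by the implicit function theorem and use $\mathrm{im}(O)\subseteq\mathrm{im}(h)$ to force the remaining (dependent) components of $\bar h$ to vanish automatically along the solution set. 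Carrying out this last step carefully --- rather than resting on a clean but potentially optimistic appeal to the rank theorem alone --- is what the proof really turns on.
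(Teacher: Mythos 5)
The paper never actually proves this proposition (it is invoked as ``well-known'' and immediately used), so your proposal has to stand on its own --- and it does not. You correctly isolate the crux: after the rank-theorem factorization $h_{|U}=\iota\circ\sigma$ onto the local slice $\Sigma=h(U)$, everything reduces to showing that $\mathrm{im}(O)$ lies in $\Sigma$ near $q_0$, equivalently that in your fibered normal form the dependent components of $\bar h$ vanish automatically along the locus cut out by the independent ones. But your proof stops exactly there: the final sentences announce that this is ``what the proof really turns on'' without performing it. This is not an omission that more care would repair, because the step is false: the global hypothesis $\mathrm{im}(O)\subset\mathrm{im}(h)$ genuinely does not localize. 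Since $\mathrm{im}(h)$ need not be an embedded submanifold, the section $O$ can pass from one local sheet $h(U)$ of the image to a different, far-away sheet, and then $\Sigma\cap\mathrm{im}(O)$ is an essentially arbitrary closed subset of $\Sigma$.

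Here is a concrete counterexample to the step, and in fact to the proposition as literally stated. Take $X=\IR$, let $\pi$ be the trivial line bundle, so $\Jet^1(\pi)\cong\IR^3$ with coordinates $(x,u,u_1)$, and let $\underline{E}=X\times\IR^2$. Choose $\chi\in\garb(\IR)$ with $\chi(x)=0$ exactly for $x\leq 0$, choose $\phi\in\garb(\IR)$ with $\phi\equiv 1$ near $u=0$ and $\phi\equiv 0$ on $[3/4,\infty)$, and put $\psi(u)=u(u-2)(u-5)$. Define
\[
  h(x,u,u_1):=\big(x,\ u(u-2),\ \chi(x)\phi(u)+\psi(u)\big).
\]
This is a morphism of fibered manifolds over $X$ of constant rank $2$: the $u_1$-derivatives vanish, and the column $\big(2u-2,\ \chi(x)\phi'(u)+\psi'(u)\big)$ is never zero, since for $u\neq 1$ its first entry is nonzero, while for $u=1$ one has $\phi'(1)=0$ and $\psi'(1)=-1$. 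The zero section $O$ satisfies $\mathrm{im}(O)\subset\mathrm{im}(h)$, because $O(x)=h(x,0,0)$ for $x\leq 0$ and $O(x)=h(x,2,0)$ for every $x$ (note $\phi(2)=\psi(2)=0$). Nevertheless
\[
  \ker_O(h)=\{(x,0,u_1)\mid x\leq 0\}\cup\{(x,2,u_1)\mid x\in\IR\},
\]
and near $(0,0,0)$ this set is a closed half-plane, hence not a submanifold of $\Jet^1(\pi)$ at all. So no argument can close your gap from the stated hypotheses. What is true --- and what your argument, completed, does prove --- is the proposition under the additional assumption that $\mathrm{im}(h)$ is an embedded submanifold of $\underline{E}$: constant rank then forces $\dim\mathrm{im}(h)$ to equal the rank and makes $h$ a submersion onto $\mathrm{im}(h)$; the inclusion $\mathrm{im}(O)\subset\mathrm{im}(h)$ exhibits $\mathrm{im}(O)$ as an embedded submanifold of $\mathrm{im}(h)$; and your submersion-preimage argument, together with $\underline{\pi}\circ h=\pi_k$, yields the fibered submanifold property. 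This extra hypothesis is harmless for the paper itself, since Proposition \ref{susub} and Theorem \ref{mai} apply the statement only when $h$ is a submersion, and Theorem \ref{peet} only in the linear constant-rank case, where embeddedness of the image is automatic; but as a free-standing claim the proposition needs it, and so does any proof.
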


It is clear that with an open subset $U\subset X$, a section $\psi\in\Gamma^{\infty}(U;\pi)$ is a solution of $\mathrm{Z}_{O}(h)$ (in $p\in U$), if and only if one has $P^h_U(\psi)=O$ (in $p$).

Next, we explain how the af\/f\/ine structure of $\pi_{k-1,k}$ can be used to introduce the notion of ``operator symbols of (possibly nonlinear) partial dif\/ferential operators''. To avoid any confusion, we remark that with ``symbol'' we will exclusively mean ``principal symbol'' in this paper.

To this end, note that the assignment
\begin{gather*}
 \mu^{\pi}_k\colon \
 \pi^*_{k}\Sym^{k}\big( \pi_{\IT^*X} \big)\otimes \pi^*_{0,k}\Verti(\pi)\longrightarrow \Verti(\pi_k), \qquad
 {\mu^{\pi}_k}_{|a}(v):= \f{\Id} {\Id t} [a+tv]_{| t=0},
\end{gather*}
for $a\in \Jet^k(\pi)$, $v\in \Sym^{k}\big( \IT^*_{\pi_k(a)}X \big)\otimes \ker ( {\pi}_{|\pi_{0,k}(a)})$, is a (mono)morphism of vector bundles over~$\Jet^k(\pi)$. Note here that $\mu^{\pi}_k$ essentially extracts the pure $k$-th order part of vertical $k$-jets. Using the map~$\mu^{\pi}_k$, we can provide the following def\/inition (see also~\cite{bryant}):

\begin{Definition}\label{symbnon}
For every $h\in \mathrm{D}^k(\pi,\underline{\pi})$, the morphism $\sigma(h)$ of vector bundles over $h$ given by the composition
\begin{gather*}
 \xymatrix{\ar@{-->}[rr]^{\hspace{10mm}\sigma(h) }
 \pi^*_{k}\Sym^{k} \big( \pi_{\IT^*X} \big) \otimes \pi^*_{k,0}\Verti(\pi)\ar[dr]_{\mu^{\pi}_k} & &
 \Verti(\underline{\pi}) \\ & \Verti(\pi_k) \ar[ur]_{h_{\Verti}}
 }
 \end{gather*}
is called the \emph{operator symbol} of $h$.
\end{Definition}

Given a partial dif\/ferential operator, one can use its symbol to check whether it def\/ines a~partial dif\/ferential equation in the sense of Proposition~\ref{diffop2} (see also Theorem~\ref{mai} below):

\begin{Proposition}\label{susub} Let $h\in \mathrm{D}^k(\pi,\underline{\pi})$. If $\sigma(h)$ is surjective, then so is $h$. If $\sigma(h)$ is a submersion, then $h$ is a submersion, too, which in particularly means that for every $O\in\Gamma^{\infty}(X;\underline{\pi})$ with $\mathrm{im}(O)\subset \mathrm{im}(h)$ the set $\mathrm{Z}_{O}(h)\subset \Jet^k(\pi)$ is a partial differential equation.
\end{Proposition}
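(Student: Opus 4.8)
The plan is to read off both assertions from the single structural fact that, by Definition~\ref{symbnon}, the symbol $\sigma(h)=h_{\Verti}\circ\mu^{\pi}_k$ is a morphism of vector bundles \emph{over} $h$. Writing $S:=\pi^*_{k}\mathrm{Sym}^{k}\big(\pi_{\IT^*X}\big)\otimes\pi^*_{k,0}\Verti(\pi)$ for its domain and $\rho\colon S\to\Jet^k(\pi)$, $\varrho\colon\Verti(\underline{\pi})\to\underline{E}$ for the two bundle projections, being a morphism over $h$ means precisely that the square
\[
\xymatrix{
S \ar[r]^{\sigma(h)} \ar[d]_{\rho} & \Verti(\underline{\pi}) \ar[d]^{\varrho} \\
\Jet^k(\pi) \ar[r]^{h} & \underline{E}
}
\]
commutes, i.e.\ $\varrho\circ\sigma(h)=h\circ\rho$. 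Both $\rho$ and $\varrho$ are vector bundle projections, hence surjective submersions. I read the hypotheses on $\sigma(h)$ as statements about the total-space map $S\to\Verti(\underline{\pi})$, and then everything follows by chasing this square.

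For the first assertion, suppose $\sigma(h)$ is surjective. Since $\varrho$ is surjective as well, the composite $\varrho\circ\sigma(h)=h\circ\rho$ is surjective onto $\underline{E}$. As $\mathrm{im}(h)\supseteq\mathrm{im}(h\circ\rho)=\underline{E}$, the operator $h$ is surjective.

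For the second assertion, suppose $\sigma(h)$ is a submersion. As $\varrho$ is a submersion, the composite $\varrho\circ\sigma(h)=h\circ\rho$ is a submersion, being a composition of submersions. Now I descend along the surjective submersion $\rho$: given any $a\in\Jet^k(\pi)$, choose $s\in S$ with $\rho(s)=a$, for instance the zero vector $0_a$. From $\IT_s(h\circ\rho)=\IT_a h\circ\IT_s\rho$ being surjective and $\mathrm{im}(\IT_a h\circ\IT_s\rho)\subseteq\mathrm{im}(\IT_a h)$ I conclude that $\IT_a h$ is surjective; since $a$ was arbitrary, $h$ is a submersion. Finally, a submersion has constant rank equal to $\dim\underline{E}$, so Proposition~\ref{diffop2} applies and yields that for every $O\in\Gamma^{\infty}(X;\underline{\pi})$ with $\mathrm{im}(O)\subset\mathrm{im}(h)$ the set $\ker_O(h)\subset\Jet^k(\pi)$ is a partial differential equation.

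The calculations are entirely formal, so I do not expect the argument itself to be the obstacle; the one point genuinely requiring care is the bookkeeping of what ``surjective'' and ``submersion'' are asserted \emph{of} the bundle morphism $\sigma(h)$. Reading them as properties of the total-space map is what makes the square-chasing work, and it is precisely this reading that keeps the two clauses distinct, since a submersion need not be surjective while surjectivity alone yields only surjectivity of $h$ and not the rank statement. If instead one prefers to start from fiberwise data, the analogous crux is to check that fiberwise surjectivity of $\sigma(h)$ forces fiberwise surjectivity of $h_{\Verti}$, whence $\IT_a h$ is onto by comparing the short exact sequences splitting $\IT_a\Jet^k(\pi)$ and $\IT_{h(a)}\underline{E}$ into vertical and horizontal parts over the identity on $\IT_{\pi_k(a)}X$.
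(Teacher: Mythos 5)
Your proof is correct and takes essentially the same route as the paper: the paper chases the commuting square $\underline{\pi}^{\Verti}\circ h_{\Verti}=h\circ(\pi_k)^{\Verti}$ together with its image under the tangent functor, while you pre-compose with $\mu^{\pi}_k$ and chase the equivalent square $\varrho\circ\sigma(h)=h\circ\rho$, which is the same diagram chase. Your explicit pointwise descent through the zero vector $0_a$ merely spells out what the paper abbreviates as ``the analogous argument for the second diagram,'' and your reading of surjectivity/submersion as properties of the total-space map agrees with the paper's usage.
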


\begin{proof} We have the following commuting diagrams
\begin{gather*}
\begin{xy}
\xymatrix{
\Verti(\pi_k) \ar[rr]^-{h_{\Verti}}\ar[dd]_{(\pi_{k})^{\Verti}} && \Verti(\underline{\pi})\ar[dd]^{\underline{\pi}^{\Verti}}\\	&&\\
\Jet^k(\pi)\ar[rr]^{h} && \underline{E}
}
\end{xy} \qquad
\begin{xy}
\xymatrix{
\IT\Verti(\pi_k) \ar[rr]\ar[dd]& & \IT\Verti(\underline{\pi})\ar[dd]\\	&&\\
\IT\Jet^k(\pi)\ar[rr] && \IT\underline{E}
}
\end{xy}
\end{gather*}
where the maps for the second diagram are given by the tangential maps corresponding to the f\/irst one. If $\sigma(h)=h_{\Verti}\circ\mu^{\pi}_k$ is surjective, then so is $h_{\Verti}$ and $\underline{\pi}^{\Verti}\circ h_{\Verti}$, so that the f\/irst assertion follows from the f\/irst diagramm. If $\sigma(h)$ is a submersion, then one can use the analogous argument for the second diagram to deduce that~$\IT h$ has full rank everywhere.
\end{proof}

\subsubsection{Linear partial dif\/ferential equations}\label{lini}

We are now going to explain how the classical concepts of linear partial dif\/ferential equations and partial dif\/ferential operators f\/it into the general setting of Section~\ref{gene}. In fact, it will turn out that the notions ``linear partial dif\/ferential equation'' and ``linear partial dif\/ferential operator'' correspond to each other under natural assumptions in the following sense: $\ker (h)$ is a linear partial dif\/ferential
equation for every $k$-th order linear dif\/ferential operator and, conversely, for every linear $k$-th order partial dif\/ferential equation $E$ there exists a~(not uniquely determined) $k$-th order linear dif\/ferential operator $h$ with $E=\ker (h)$. Note that an analogous correspondence statement for the nonlinear case is only locally true~\cite{andy}. Finally, the space of linear partial dif\/ferential operators coincides with the space of classical linear partial dif\/ferential operators (see Theorem~\ref{peet} below). We begin with:

\begin{Definition}\label{lin}
Let $\pi$ and $\underline{\pi}$ be vector bundles.
\begin{enumerate}\itemsep=0pt
\item[a)] A subset $\IEE\subset \Jet^k(\pi)$ is called a~\emph{linear partial differential equation on $\pi$ of order $\leq k$}, if
 ${\pi_k}_{|\IEE}\colon \IEE\to X$ is a sub-vector-bundle of~$\pi_k$.
\item[b)] A morphism $h\colon \Jet^k(\pi)\to \underline{E}$ of vector bundles over $X$ is called a~\emph{linear partial differential operator of order $\leq k$ from $\pi$ to~$\underline{\pi}$}.
\end{enumerate}
\end{Definition}

\begin{Remark} Let $\pi$ and $\underline{\pi}$ be vector bundles. Then $h\in \mathrm{D}^k(\pi,\underline{\pi})$ is linear, if and only if $P^h_X\colon \Gamma^{\infty}(X;\pi)\to \Gamma^{\infty}(X;\underline{\pi})$ is linear. We denote the linear space of linear partial dif\/ferential operators by $\mathrm{D}^k_{\mathrm{lin}}(\pi,\underline{\pi})\subset \mathrm{D}^k(\pi,\underline{\pi})$ and remark that if $h\in\mathrm{D}^k_{\mathrm{lin}}(\pi,\underline{\pi})$, then one has $h^{(l)}\in \mathrm{D}^{k+l}_{\mathrm{lin}}(\pi,\underline{\pi}_l)$ for all $l\in\IN$.
\end{Remark}

Let us recall the def\/inition of ``classical'' linear partial dif\/ferential operators:

\begin{Definition}\label{peet0}
Let $\pi$ and $\underline{\pi}$ be vector bundles. A~\emph{classical linear partial differential operator of order $\leq k$} from $\pi$ to $\underline{\pi}$
is a morphism of sheaves
\begin{gather*}
D\colon \ \Gamma^{\infty}(\pi)\longrightarrow \Gamma^{\infty}(\underline{\pi})
\end{gather*}
with the following property: For every manifold chart $\tilde{x}\colon U\to \IR^m$ of $X$ for which there are frames $e_1,\dots,e_n\in\Gamma^{\infty}(U;\pi)$ and $\underline{e}_1,\dots,\underline{e}_{\underline{n}}\in\Gamma^{\infty}(U;\underline{\pi})$ there exist (necessarily unique) functions $D^{\alpha,\beta}_{I}\in\mathscr{C}^{\infty}(U)$ for $\alpha=1,\dots,n$, $\beta=1,\dots,\underline{n}$, and $I\in \IN^m_{0,k}$ such that one has for all $\psi^1,\dots,\psi^n\in\mathscr{C}^{\infty}(U)$
\begin{gather*}
D_U \left(\sum^n_{j=1}\psi^{\alpha}e_{\alpha}\right)= \sum^{\underline{n}}_{\beta=1}\sum^n_{\alpha=1}\sum_{I\in \IN^m_{0,k} } D^{\alpha,\beta}_{I} \f{\partial^{|I|} \psi^{\alpha}}{\partial \tilde{x}^I}\underline{e}_{\beta}.
\end{gather*}
The linear space of classical partial dif\/ferential operators will be denoted by $\mathrm{D}^k_{\mathrm{cl},\mathrm{lin}}(\pi,\underline{\pi})$.
\end{Definition}

Now one has:

\begin{Theorem}\label{peet}
Let $\pi$ and $\underline{\pi}$ be vector bundles.
\begin{enumerate}\itemsep=0pt
\item[$a)$] $P^{\bullet}$ induces the isomorphism of linear spaces
 \begin{gather*}
 P^{\bullet}_{\mathrm{lin}}\colon \ \mathrm{D}^k_{\mathrm{lin}}(\pi,\underline{\pi})\longrightarrow \mathrm{D}^k_{\mathrm{cl},\mathrm{lin}}(\pi,\underline{\pi}),\qquad h\longmapsto P^h.
 \end{gather*}

\item[$b)$] If $h\in\mathrm{D}^k_{\mathrm{lin}}(\pi,\underline{\pi})$ has constant rank, then $\ker(h)\subset\Jet(\pi_k)$ is a linear partial differential equation. Conversely, if $\IEE\subset \Jet^k(\pi)$ is a linear partial differential equation, then there is a~vector bundle $\underline{\underline{\pi}}\colon \underline{\underline{E}}\to X$ and an $\underline{\underline{h}}\in \mathrm{D}^k_{\mathrm{lin}}\left(\pi,\underline{\underline{\pi}}\right)$ with constant rank such that $\IEE= \ker\left(\underline{\underline{h}}\right)$.
\end{enumerate}
\end{Theorem}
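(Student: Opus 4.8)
The plan is to prove the two parts separately, treating a) by a local coordinate computation combined with the observation that every jet is realized by a section, and b) by standard vector bundle constructions.

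For part a), I would first check that $P^{\bullet}_{\mathrm{lin}}$ is well defined with the asserted form. Fix a fibered chart $(x,u)$ of $\pi$ over $W$, together with frames $e_1,\dots,e_n$ of $\pi$ and $\underline{e}_1,\dots,\underline{e}_{\underline{n}}$ of $\underline{\pi}$ over the induced chart domain on $X$. Since $h\in\mathrm{D}^k_{\mathrm{lin}}(\pi,\underline{\pi})$ is a morphism of vector bundles over $X$, it is fiberwise linear, so in the jet coordinates $(\ref{co})$ it is given by a matrix of smooth base functions $D^{\alpha,\beta}_I$ indexed by $\alpha=1,\dots,n$, $\beta=1,\dots,\underline{n}$ and $I\in\IN^m_{0,k}$. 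Composing $h$ with $\jet^k$ and recalling that the $I$-component of the jet coordinate of $\jet^k_p\psi$ is exactly $\partial^{|I|}(u^\alpha\circ\psi)/\partial\tilde{x}^I(p)$ reproduces precisely formula $(\ref{dede})$; hence $P^h\in\mathrm{D}^k_{\mathrm{cl},\mathrm{lin}}(\pi,\underline{\pi})$, and linearity of the assignment $h\mapsto P^h$ is immediate. Injectivity of $P^{\bullet}_{\mathrm{lin}}$ is inherited from the injectivity of $P^{\bullet}$ already recorded above; equivalently, every point of $\Jet^k(\pi)$ is of the form $\jet^k_p\psi$, so $P^h$ determines $h$.

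For surjectivity in a), given a classical operator $D$ I would define $h$ intrinsically by $h(\jet^k_p\psi):=(D\psi)(p)\in\underline{E}_p$, where $\psi$ is any local section representing the jet. The only point requiring care is that this value depends on $\psi$ solely through $\jet^k_p\psi$: choosing a fibered chart with frames around $p$, formula $(\ref{dede})$ shows that $(D\psi)(p)$ involves only the derivatives of $\psi$ up to order $k$ at $p$, which are exactly the data encoded in $\jet^k_p\psi$. This is where the order $\leq k$ hypothesis enters, and the same local expression simultaneously exhibits $h$ as smooth and fiberwise linear in the jet coordinates, hence a morphism of vector bundles over $X$. By construction $P^h=D$, so $P^{\bullet}_{\mathrm{lin}}$ is onto. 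I expect this well-definedness step to be the only genuine content of the argument, everything else being bookkeeping in coordinates.

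For part b), the forward implication is the statement that the fiberwise kernel $\ker(h)=\ker_0(h)=h^{-1}(\mathrm{im}(0))$ of a vector bundle morphism $h$ of constant rank is a sub-vector-bundle of $\pi_k$; putting $h$ into local normal form over suitable trivializing neighbourhoods exhibits $\ker(h)$ as a sub-vector-bundle, which by Definition $\ref{lin}$ a) is exactly a linear partial differential equation in $\Jet^k(\pi)$. For the converse, given a linear partial differential equation $\IEE\subset\Jet^k(\pi)$, so that ${\pi_k}_{|\IEE}$ is a sub-vector-bundle of $\pi_k$, I would take $\underline{\underline{\pi}}:=\pi_k/\IEE$ to be the quotient vector bundle over $X$ and let $\underline{\underline{h}}:\Jet^k(\pi)\to\underline{\underline{E}}$ be the canonical quotient projection. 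This is a surjective morphism of vector bundles over $X$, hence lies in $\mathrm{D}^k_{\mathrm{lin}}(\pi,\underline{\underline{\pi}})$ by Definition $\ref{lin}$ b); it has constant rank $\operatorname{rank}(\pi_k)-\operatorname{rank}(\IEE)$, and its kernel is precisely $\IEE$. The main obstacle here, such as it is, reduces to invoking the right vector bundle facts, namely the existence of the quotient bundle and the local normal form for constant rank morphisms, rather than to any substantial difficulty.
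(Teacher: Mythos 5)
Your proposal is correct and follows essentially the same route as the paper: the same coordinate computation with induced frames $e_{I,\alpha}$ on $\Jet^k(\pi)$ for well-definedness of $P^{\bullet}_{\mathrm{lin}}$, the same definition $h(\jet^k_p\psi):=(D\psi)(p)$ for surjectivity (where you in fact spell out the well-definedness via the order-$\leq k$ dependence more explicitly than the paper, which merely asserts it), and for b) the same appeal to the constant-rank normal form together with the quotient bundle $\Jet^k(\pi)/\IEE$ with its canonical projection.
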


\begin{proof} a) We f\/irst have to show that $P^{\bullet}_{\mathrm{lin}}$ is well-def\/ined, which means that for any $h\in\mathrm{D}^k_{\mathrm{lin}}(\pi,\underline{\pi})$, $P^{h}$ is in $\mathrm{D}^k_{\mathrm{cl},\mathrm{lin}}(\pi,\underline{\pi})$. It is then clear that $P^{\bullet}_{\mathrm{lin}}$ is a linear monomorphism. To this end, let $\tilde{x}$, $e_{\alpha}$, $\underline{e}_{\beta}$, $\psi^{\alpha}$ be as in Def\/inition \ref{peet0} and let $a_{\alpha}$ be a basis for $F$. Then we have the vector bundle chart
\begin{gather*}
\phi\colon \ \pi^{-1}(U)\to U \times F, \qquad \sum^n_{\alpha=1} v^{\alpha}e_{\alpha}(p)\longmapsto \left(p,\sum^n_{\alpha=1} v^{\alpha} a_{\alpha}\right),\qquad p\in U,
\end{gather*}
so that we get the f\/ibered chart
\begin{gather*}
(x,u)\colon \ \pi^{-1}(U)\longrightarrow \IR^m\times \IR^n,\qquad \sum^n_{\alpha=1} v^{\alpha}e_{\alpha}(p)\longmapsto \big(\tilde{x}(\pi(p)),\big(v^1,\dots,v^n\big)\big)
\end{gather*}
of $\pi$ as in Remark~\ref{ReCha}(c). Then
\begin{gather*}
(\pi_k,u_k) \colon \ \pi_{k}^{-1}(U)\longrightarrow U\times \jetfib(m,0,k)^n
\end{gather*}
is a vector bundle chart of $\pi_k$ by Lemma \ref{struc}, and we get the frame $e_{I,\alpha}\in\Gamma(U;\pi_k)$, $\alpha=1,\dots, n$, $I\in \IN^m_{0,k}$, given by
\begin{gather*}
e_{I,\alpha}:= (\pi_k,u_k )^{-1}(\bullet,\delta_{I,\alpha}).
\end{gather*}
Hereby, $\delta_{I,\alpha}\colon \IN^m_{0,k}\to \IR^n$ is def\/ined by $\delta_{I,\alpha}(J):=1_{\alpha}$, if $I=J$, and to be $0$ elsewhere. Since $h$ is a~homomorphism of linear bundles over $X$, there are uniquely determined $h^{\alpha,\beta}_{I}\in\mathscr{C}^{\infty}(U)$ such that one has for all
$\alpha=1,\dots,n$, $I\in \IN^m_{0,k}$ and $\psi^{\alpha,I}\in \mathscr{C}^{\infty}(U)$
\begin{gather*}
h\left(\sum^n_{\alpha=1}\sum_{I\in \IN^m_{0,k} }\psi^{\alpha,I}e_{\alpha,I}\right) =\sum^{\underline{n}}_{\beta=1}\sum^n_{\alpha=1}\sum_{I\in \IN^m_{0,k} } h^{\alpha,\beta}_{I}\psi^{\alpha,I}\underline{e}_{\beta}.
\end{gather*}
The proof of the asserted well-def\/inedness of $P^{\bullet}_{\mathrm{lin}}$ is completed by observing that by the above construction of the frame $e_{I,\alpha}$ for $\Gamma^{\infty}(U;\pi_k)$ the following equality holds true:
\begin{gather*}
\jet^k \left(\sum^n_{\alpha=1}\psi^{\alpha}e_{\alpha}\right) = \sum^n_{\alpha=1}\sum_{I\in \IN^m_{0,k} }\f{\partial^{|I|} \psi^{\alpha}}{\partial \tilde{x}^I}e_{\alpha,I}.
\end{gather*}

In order to prove surjectivity of $P^{\bullet}_{\mathrm{lin}}$, let $D\in \mathrm{D}^k_{\mathrm{cl},\mathrm{lin}}(\pi,\underline{\pi})$, and let
$\jet^k_p\psi\in \Jet(\pi_k)$, with $p$ from an open subset $U\subset X$. Then $h(\jet^k_p\psi):= D_U\psi(p)$ gives rise to a well-def\/ined element $h\in\mathrm{D}^k_{\mathrm{lin}}(\pi,\underline{\pi})$, which of course satisf\/ies $P^h_{\mathrm{lin}} = D$.

b) The f\/irst fact is well-known. For the second assertion, we can simply take $\underline{\underline{E}}\to X$ to be given by the quotient bundle $\Jet^k(\pi)/\IEE\to X$, and $\underline{\underline{h}}$ to be given by the canonical projection $\Jet^k(\pi)\to \Jet^k(\pi)/\IEE$ (see \cite[Proposition~3.10]{lewis} for a more general statement).
\end{proof}

Finally, we explain in which sense the classical concept of linear operator symbols f\/its into the general setting of Section~\ref{gene}.

Let $\pi$ and $\underline{\pi}$ be vector bundles for the moment. From the canonical identif\/ication of $\operatorname{ker}(\IT\pi_{| e})$ with $\pi^{-1}(\pi(e))$, for $e\in E$, (and analogous ones for $\underline{\pi}$), we obtain canonical morphisms of vector bundles over the base map~$\pi$ (resp.~$\underline{\pi}$)
\begin{gather}\label{iso1}
\begin{split}
& \sigma^{\pi}\colon \ \Verti(\pi) \longrightarrow E, \\
&{\sigma^{\pi}}_{|\ker({\IT\pi}_{|e})}\colon \ \ker({\IT\pi}_{|e}) \longrightarrow \pi^{-1}(\pi(e)),\qquad e\in E,\qquad \text{and} \\
\end{split}
\\ \label{iso2}
\begin{split}
& \sigma^{\underline{\pi}}\colon \ \Verti(\underline{\pi}) \longrightarrow \underline{E}, \\
&{\sigma^{\underline{\pi}}}_{|\ker(\IT\underline{\pi}_{|\underline{e}})}\colon \ \ker(\IT\underline{\pi}_{|\underline{e}}) \longrightarrow \underline{\pi}^{-1}(\underline{\pi}(\underline{e})),\qquad \underline{e}\in \underline{E},
\end{split}
\end{gather}
 which both are f\/iberwise isomorphisms.
 It follows that for each $k\in \IN^*$ the map
\begin{gather}
\begin{split}\label{iso3}
\sigma^{\pi}_k\colon \ & \pi^*_k\Sym^{k}\big( \pi_{\IT^*X} \big)\otimes \pi^*_{k,0}\Verti(\pi) \longrightarrow \Sym^{k} \big( \pi_{\IT^*X} \big)\otimes E, \\
& {\sigma^{\pi}_k} (v\otimes w):=v\otimes \sigma^{\pi}(w),
\end{split}
\end{gather}
where \looseness=-1 $v\otimes w\in\operatorname{Sym}^{k} \big( \IT^*_{\pi_k(a)}X\big)\otimes \operatorname{ker}\big( {\IT\pi}_{|\pi_{0,k}(a)} \big)$ for $a\in\Jet^k(\pi)$, is a morphism of vector bundles over the base map $\pi_k$ and also acts by ismorphisms, f\/iberwise. Furthermore, for later reference, we record that there is a~canonical (mono)morphism of vector bundles over $X$ which is def\/ined by
\begin{align*}
\begin{split}
 \mu^{\pi}_{k,\mathrm{lin}} \colon \ & \Sym^{k} \big( \pi_{\IT^*X} \big) \otimes E\longrightarrow \Jet^k(\pi), \\
&\Id f_1(p)\odot\cdots \odot\Id f_k(p) \otimes \psi(p)\longmapsto \jet^k_p(f_1\cdots f_k\psi).
\end{split}
\end{align*}
Here, the $f_j$ run through the elements of $\mathscr{C}^{\infty}(p;X)$ satisfying $f_j(p)=0$, and $\psi\in\Gamma^{\infty}(p;\pi)$. Ana\-lo\-gously to $\mu^{\pi}_k$, the map $\mu^{\pi}_{k,\mathrm{lin}}$ also extracts the pure $k$-th order part of $k$-jets in an appropriate sense (taking into account the canonical isomorphisms~(\ref{iso2}) and~(\ref{iso3})).

The following result recalls the classical def\/inition of linear operator symbols and shows the naturality of Def\/inition~\ref{symbnon}, in the sense that in the linear case, the linear operator symbol coincides with the operator symbol up to the canonical isomorphisms~(\ref{iso2}) and~(\ref{iso3}):

\begin{Propanddef}\label{PropLinSymb} Let $\pi$ and $\underline{\pi}$ be vector bundles and let $h\in\mathrm{D}^k_{\mathrm{lin}}(\pi,\underline{\pi})$.
\begin{enumerate}\itemsep=0pt
\item[$a)$] There is a unique morphism of vector bundles over $X$
\begin{gather*}
\sigma_{\mathrm{lin}}(h)\colon \ \Sym^{k}(\pi_{\IT^*X})\otimes E\longrightarrow \underline{E}
\end{gather*}
with the following property: For every manifold chart $\tilde{x}\colon U\to \IR^m$ of $X$ for which there are frames $e_1,\dots,e_n\in\Gamma^{\infty}(U;\pi)$ and $\underline{e}_1,\dots,\underline{e}_{\underline{n}}\in\Gamma^{\infty}(U;\underline{\pi})$, one has
\begin{gather}
\sigma_{\mathrm{lin}}(h)\left( \sum_{I\in\IN^m_{k,k}}\sum^n_{\alpha=1}v^{\alpha}_I\Id \tilde{x}_{I}\otimes e_{\alpha}\right)=\sum^{\underline{n}}_{\beta=1}\sum^{n}_{\alpha=1}\sum_{I\in\IN^m_{k,k}}
(P^h_{\mathrm{lin}})^{\alpha,\beta}_{I}v^{\alpha}_I\underline{e}_{\beta},\label{local}
\end{gather}
where $v^{\alpha}_I\in\mathscr{C}^{\infty}(U)$, and where we have used the notation from Definition~{\rm \ref{peet0}} and Theorem~{\rm \ref{peet}}. The morphism $\sigma_{\mathrm{lin}}(h)$ is called the \emph{linear operator symbol of~$h$}.

\item[$b)$] The following diagram commutes
\begin{gather*}
\begin{xy}
\xymatrix{
 \pi^*_{k}\Sym^{k}(\pi_{\IT^*X})\otimes \pi^*_{k,0}\Verti(\pi)\ar[rr]^-{\sigma(h)} \ar[dd]_-{\sigma^{\pi}_k} &&\Verti(\underline{\pi})
 \ar[dd]^{\sigma^{\underline{\pi}}} \\	&&\\
\Sym^{k}(\pi_{\IT^*X})\otimes E\ar[rr]^{\hspace{10mm}\sigma_{\mathrm{lin}}(h)} && \underline{E}
}
\end{xy}
\end{gather*}
\end{enumerate}
\end{Propanddef}

\begin{proof} a) Here, one only has to prove that the representation~(\ref{local}) does not depend on a~particular choice of local data. In fact, the easiest way to see this, is to note that one can simply def\/ine $\sigma_{\mathrm{lin}}(h)$ by the diagram
\begin{gather*}
 \xymatrix{\ar@{-->}[rr]^{\hspace{10mm}\sigma_{\mathrm{lin}}(h) } \Sym^{k}(\pi_{\IT^*X})\otimes E\ar[dr]_{\mu^{\pi}_{k,\mathrm{lin}}} & & \underline{E} \\
 & \Jet^k(\pi) \ar[ur]_{h}
 }
 \end{gather*}
To see that $\sigma_{\mathrm{lin}}(h)$ def\/ined like this satisf\/ies (\ref{local}), let $\tilde{x}\colon U\to \IR^m$ be a manifold chart of $X$ such that there are frames $e_1,\dots,e_n\in\Gamma^{\infty}(U;\pi)$, $\underline{e}_1,\dots,\underline{e}_{\underline{n}}\in\Gamma^{\infty}(U;\underline{\pi})$. Then, as in the proof of Theorem~\ref{peet}(a), picking a basis $a_{\alpha}$ for $F$, we get the corresponding frame $e_{I,\alpha}\in\Gamma^{\infty}(U;\pi_k)$, $\alpha=1,\dots n$, $I\in \IN^m_{0,k}$, and we denote the representation of $h$ with respect to $e_{I,\alpha}$ and $\underline{e}_{\beta}$ by $h^{\alpha,\beta}_I$. Furthermore, by the proof of Theorem~\ref{peet}(a), we have $h^{\alpha,\beta}_I=(P^h_{\mathrm{lin}})^{\alpha,\beta}_{I}$. Now one has
\begin{gather*}
\mu^{\pi}_{k,\mathrm{lin}}\left(\sum_{I\in\IN^m_{k,k}}\sum^n_{\alpha=1}v^{\alpha}_I\Id \tilde{x}_{I}\otimes e_{\alpha}
\right)= \sum_{I\in\IN^m_{k,k}}\sum^n_{\alpha=1}v^{\alpha}_I e_{I,\alpha},
\end{gather*}
so that
\begin{gather*}
h\circ\mu^{\pi}_{k,\mathrm{lin}}\left( \sum_{I\in\IN^m_{k,k}}\sum^n_{\alpha=1}v^{\alpha}_I\Id \tilde{x}_{I}\otimes e_{\alpha}\right)=\sum^{\underline{n}}_{\beta=1}\sum^{n}_{\alpha=1}\sum_{I\in\IN^m_{k,k}}h^{\alpha,\beta}_Iv^{\alpha}_I\underline{e}_{\beta},
\end{gather*}
which completes the proof of part~a).

b) Let $a\in \Jet^k(\pi)$ be arbitrary, and let $\tilde{x}\colon U\to \IR^m$ be a manifold chart of $X$ around $\pi_k(a)$ such that there are frames $e_1,\dots,e_n\in\Gamma^{\infty}(U;\pi)$, $\underline{e}_1,\dots,\underline{e}_{\underline{n}}\in\Gamma^{\infty}(U;\underline{\pi})$. Then, again as in the proof of Theorem~\ref{peet}(a), picking a basis $a_{\alpha}$ for $F$ and a basis $\underline{a}_{\beta}$ for $\underline{F}$, we get the corresponding adapted coordinates
\begin{gather*}
(x,u)\colon \ \pi^{-1}(U)\longrightarrow \IR^m\times \IR^n,\qquad (\underline{x},\underline{u})\colon \ \underline{\pi}^{-1}(U)\longrightarrow \IR^m\times \IR^{\underline{n}}.
\end{gather*}
We can expand an arbitrary
\begin{gather*}
v\in\Sym^{k}\big(\IT^*_{\pi_k(a)}X\big)\otimes \ker(\mathrm{T\pi}_{| \pi_{0,k}(a)})
\end{gather*}
uniquely as
\begin{gather*}
v=\sum_{I\in\IN^m_{k,k}}\sum^n_{\alpha=1}v^{\alpha}_I \Id \tilde{x}_{I \,|\, \pi_k(a)}\otimes \f{\partial}{\partial u^{\alpha}}_{| \pi_{0,k}(a)},\qquad v^{\alpha}_I\in\IR,
\end{gather*}
so that
\begin{gather*}
\sigma^{\pi_k}(v)=\sum_{I\in\IN^m_{k,k}}\sum^n_{\alpha=1}v^{\alpha}_I \Id \tilde{x}_{I}\otimes e_{\alpha \,|\, \pi_k(a)},
\end{gather*}
and we arrive at
\begin{gather}
\sigma_{\mathrm{lin}}(h)\circ\sigma^{\pi_k}_{| v} =\sum^{\underline{n}}_{\beta=1}\sum^{n}_{\alpha=1}\sum_{I\in\IN^m_{k,k}}(P^h_{\mathrm{lin}})^{\alpha,\beta}_{I}v^{\alpha}_I\underline{e}_{\beta \,|\, \pi_k(a)}.\label{hil}
\end{gather}
Let us now evaluate $\sigma^{\underline{\pi}}\circ\sigma(h)=\sigma^{\underline{\pi}}\circ h_{\Verti}\circ \mu^{\pi}_k$ in~$v$: By Proposition~\ref{jetmf} and Lemma~\ref{struc} we have the frame
\begin{gather*}
\f{\partial}{\partial u^{k}_{I,\alpha}}\in\Gamma^{\infty}\big(\pi^{-1}_k(U); (\pi_k)^{\Verti} \big),\qquad I\in\IN^m_{0,k},\qquad \alpha=1,\dots,n .
\end{gather*}
As $h$ is a linear morphism, the linear morphism $h_{\Verti}$ is represented with respect to the frames $\f{\partial}{\partial u^{\alpha}_{k,I}}$ and $\f{\partial}{\partial \underline{u}^{\beta}}$ precisely by the functions
\begin{gather*}
h^{\alpha,\beta}_I\circ\big(\pi_{k \,|\, \pi^{-1}_k(U)}\big)\in\mathscr{C}^{\infty}\big(\pi^{-1}_k(U)\big),
\end{gather*}
where $h^{\alpha,\beta}_I\in\mathscr{C}^{\infty}(U)$ is the representation of $h$ with respect to $(x,u)$ and $(\underline{x},\underline{u})$ (cf.\ the proof of Theorem~\ref{peet}(a)). Thus, in view of
\begin{gather*}
\mu^{\pi}_k(v)=\sum^{n}_{\alpha=1}\sum_{I\in\IN^m_{k,k}}v^{\alpha}_I \f{\partial}{\partial u^{\alpha}_{k,I} }_{| a},
\end{gather*}
we have
\begin{gather*}
\sigma(h)_{|_v}=\sum^{\underline{n}}_{\beta=1}\sum^{n}_{\alpha=1}\sum_{I\in\IN^m_{k,k}} v^{\alpha}_I h^{\alpha,\beta}_{I \,|\, \pi_k(a)} \f{\partial}{\partial \underline{u}^{\beta} }_{| \pi_{0,k}(a)},
\end{gather*}
so that
\begin{gather*}
\sigma^{\underline{\pi}}\circ\sigma(h)_{| v}= \sum^{\underline{n}}_{\beta=1}\sum^{n}_{\alpha=1}\sum_{I\in\IN^m_{k,k}} h^{\alpha,\beta}_Iv^{\alpha}_I \underline{e}_{\beta \,|\, \pi_k(a)} .
\end{gather*}
But this is equal to~(\ref{hil}), in view of $(P^h_{\mathrm{lin}})^{\alpha,\beta}_{I}=h^{\alpha,\beta}_I$. The claim follows.
\end{proof}

\subsection[The manifold of $\infty$-jets and formally integrable PDEs]{The manifold of $\boldsymbol{\infty}$-jets and formally integrable PDEs}\label{inf}

Throughout Section~\ref{inf}, $\pi$ will again be an arbitrary f\/iber bundle.

Finally, in this section we are going to make contact with the abstract theory on prof\/inite dimensional manifolds from Section~\ref{pfd}: We are going to prove that the space of ``$\infty$-jets'' in $\pi$ canonically becomes a prof\/inite dimensional manifold (see Proposition~\ref{borel}), and that the space of ``formal solutions'' of a ``formally integrable'' partial dif\/ferential equation on~$\pi$ canonically is a~prof\/inite dimensional submanifold of the latter (see Proposition~\ref{defS}).

We start by introducing the space of $\infty$-jets. In analogy to Def\/inition~\ref{equiv}, we have:

\begin{Definition} Let $p\in X$. Any two $\psi, \varphi\in\Gamma^{\infty} (p;\pi)$ are called \emph{$\infty$-equivalent at~$p$}, if $\psi(p)=\varphi(p)$ and if for every f\/ibered chart $(x,u)\colon W\to\IR^m\times\IR^n$ of $\pi$ with $W\cap \pi^{-1}(p)\ne \varnothing$ one has
\begin{gather*}
\f{\partial^{|I|} (u^{\alpha}\circ\psi)}{\partial \tilde{x}^I} (p)=\f{\partial^{|I|} (u^{\alpha}\circ\varphi)}{\partial \tilde{x}^I} (p)
\end{gather*}
for all $\alpha=1,\dots,n$ and all $I\in \IN^m$ with $1\leq |I|< \infty$. The corresponding equivalence class $\jet^{\infty}_p \psi$ of $\psi$ is called the \emph{$\infty$-jet of~$\psi$ at~$p$}.
\end{Definition}

\begin{Remark}
In view of Remark \ref{fadibru}, $\infty$-equivalence also only has to be checked in \emph{some} f\/ibered chart.
\end{Remark}

It will be convenient in what follows to set $\Jet^{-1}(\pi):=X$, $\jet^{-1}_p\psi_p := p$, and $\pi_{-1,0}:=\pi$. We def\/ine
\begin{gather*}
\Jet^{\infty}(\pi):= \bigcup_{p\in X} \big\{ \jet^{\infty}_p\psi \,|\, \psi \in\Gamma^{\infty} (p;\pi) \big\},
\end{gather*}
and obtain for every $i\in\IZ_{\geq -1}$ a surjective map
\begin{align}
 \pi_{i,\infty}\colon \ \Jet^{\infty}(\pi)\longrightarrow \Jet^{i}(\pi),\qquad \jet^{\infty}_p\psi\longmapsto \jet^{i}_p\psi .
 \label{initial}
\end{align}
We equip $\Jet^{\infty}(\pi)$ with the initial topology with respect to the maps $\pi_{i,\infty}$, $i\in\IZ_{\geq -1}$. Furthermore, we def\/ine $\mathscr{C}^{\infty}_{\pi}$ to be the sheaf on $\Jet^{\infty}(\pi)$, whose section space $\mathscr{C}^{\infty}_{\pi}(U)$ over an open $U\subset \Jet^{\infty}(\pi)$ is given by the set of all $f\in \mathscr{C}(U)$ such that for every $x\in U$ there is an $i\in \IZ_{\geq -1}$, an open $U_i\subset \Jet^{i}(\pi)$ and an $f_i\in \mathscr{C}^{\infty}(U_i)$ with $x\in \pi_{i,\infty}^{-1}(U_i) \subset U$ and
\begin{gather*}
 f_{| \pi_{i,\infty}^{-1}(U_i)}=f_i\circ {\pi_{i,\infty}}_{|\pi_{i,\infty}^{-1}(U_i)} .
\end{gather*}
In particular, $(\Jet^{\infty}(\pi),\mathscr{C}^{\infty}_{\pi})$ becomes a locally $\IR$-ringed space. Now observe that we have, in view of~(\ref{proj}), a~smooth projective system $\big( \Jet^i(\pi), \pi_{i,j}\big)$, which graphically can be depicted by
\begin{gather}\label{lim}
\Jet^{-1}(\pi)\xleftarrow{\pi_{-1,0}} \Jet^0(\pi) \xleftarrow{\pi_{0,1}}
\cdots \longleftarrow \Jet^{i}(\pi)\xleftarrow{\pi_{i,i+1}} \Jet^{i+1}(\pi) \longleftarrow \cdots,
\end{gather}
together with a family of continuous maps
\begin{gather*}
 \pi_{i,\infty}\colon \ \Jet^{\infty}(\pi)\longrightarrow \Jet^{i}(\pi), \qquad i\in\IZ_{\geq -1}.
\end{gather*}
These data have the following crucial property.
\begin{Propanddef}\label{borel}
The family $\big( \Jet^i(\pi), \pi_{i,j} , \pi_{i,\infty} \big)$ is a smooth projective representation of $(\Jet^{\infty}(\pi),\mathscr{C}^{\infty}_{\pi})$. In particular, when equipped with the corresponding pfd structure, $(\Jet^{\infty}(\pi),\mathscr{C}^{\infty}_{\pi})$ canonically becomes a smooth profinite
dimensional manifold, called the \emph{manifold of $\infty$-jets given by $\pi$.}
\end{Propanddef}

\begin{proof} Let $\Jet^{\infty}(\pi)':=\lim \limits_{\longleftarrow \atop i\in \IZ_{\geq -1} }\Jet^{i}(\pi)$ denote the canonical projective limit of~(\ref{lim}), that means let
\begin{gather*}
 \Jet^{\infty}(\pi)' = \bigg\{ b=(b_{-1},b_0,b_1,\dots)\in\prod_{i\in \IZ_{\geq -1}}\Jet^{i}(\pi)\,|\, b_i=\pi_{i,j}(b_j)
 \text{ for all $i\leq j$}\bigg\} ,
\end{gather*}
and let $\pi_{i,\infty}'\colon \Jet^{\infty}(\pi)'\to \Jet^{i}(\pi)$ denote the canonical projections. We are going to prove the existence of a homeomorphism $\Xi$ such that the diagrams
\begin{gather} \label{dgq}
\begin{split}&
 \xymatrix{
\ar@<2pt>[rr]^{\Xi } \Jet^{\infty}(\pi)\ar[dr]_{\pi_{i,\infty}} & & \ar@<2pt>[ll]^{\Xi^{-1} }\Jet^{\infty}(\pi)'\ar[dl]^{\pi_{i,\infty}'} \\
 & \Jet^i(\pi)
 }
\end{split}
 \end{gather}
commute for all $i\in \IZ_{\geq -1}$. Then the universal property of $(\Jet^{\infty}(\pi)',\pi_{i,\infty}')$ will directly imply the same property for $(\Jet^{\infty}(\pi),\pi_{i,\infty})$, which is precisely (PFM1). As a consequence, (PFM2) is trivially satisf\/ied by the def\/inition of the structure sheaf $\mathscr{C}^{\infty}_{\pi}$.

We now simply def\/ine $\Xi(a)_j:=\pi_{j,\infty}(a)$ for $a\in\Jet^{\infty}(\pi)$ and $ j\in \IZ_{\geq -1}$. Then it is obvious that~$\Xi$ is a well-def\/ined injective map, and that the $\Xi$-diagram in~(\ref{dgq}) commutes. In particular, the continuity of $\Xi$ is directly implied by that of the maps $\pi_{i,\infty}$. In order to see that $\Xi$ is surjective and that $\Xi^{-1}$ is continuous, let us recall that Borel's theorem states
that for any map
\begin{gather*}
t\colon \ \IN^m=\bigcup_{j\in\IN}\IN^m_{0,j}\longrightarrow\IR^n
\end{gather*}
there is a smooth function $\tilde{\psi}\colon \IR^m\to \IR^n$ such that $t_I=\partial_I\tilde{\psi}(0)/I!$ for all $I\in\IN^m$. Let $b\in\Jet^{\infty}(\pi)'$ and let $\tilde{x}\colon U\to\IR^m$ be a manifold chart of $X$ around $b_{-1}$ with $\tilde{x}(b_{-1})=0$. Choosing furthermore a bundle chart $\phi\colon \pi^{-1}(U)\to U\times F$ and a manifold chart $\tilde{u}\colon B\to \IR^n$ of $F$, we get the f\/ibered chart
\begin{gather*}
(x,u):=(\tilde{x}\circ \pi,\tilde{u}\circ\mathrm{pr}_2\circ \phi)\colon \ \pi^{-1}(U)\longrightarrow \IR^m\times \IR^n
\end{gather*}
of $\pi$ by Remark~\ref{ReCha}(c). Moreover, the function $t\colon \IN^m\to\IR^n$, $t_I:= u_{j,I}(b_j)/ I!$, if $I\in\IN^m_{0,j}$, is well-def\/ined. Borel's theorem then produces a function $\tilde{\psi}\colon \IR^m\to \IR^n$ such that $t_I=\partial_I\tilde{\psi}(0)/I!$. It is clear that the section $\psi\in\Gamma^{\infty}_{b_{-1}}(\pi)$ def\/ined by
\begin{gather*}
\psi:=\phi^{-1}\big(\bullet,\tilde{u}^{-1}\circ\tilde{\psi}\circ\tilde{x}\big)
\end{gather*}
satisf\/ies $x_j(\jet^j_{b_{-1}}\psi)=0$, and $u_{j,I}(\jet^j_{b_{-1}}\psi)=u_{j,I}(b_j)$ for all $j \!\in\! \IZ_{\geq -1}$ and $I\in\IN^m_{0,j}$, thus \mbox{$\Xi(\jet^{\infty}_{b_{-1}}\psi)=b$}, and $\Xi$ is surjective, indeed. Furthermore, by the construction of~$\Xi^{-1}(b)$, it is also clear that the $\Xi^{-1}$-diagram in (\ref{dgq}) commutes, so that the continuity of $\Xi^{-1}$ trivially follows from that of the~$\pi_{i,\infty}'$. This completes the proof.
\end{proof}

\begin{Remark} An important additional structure on the prof\/inite dimensional manifold $\Jet^{\infty}(\pi)$ is given by its \emph{Cartan distribution}, a canonically given (involutive) distribution. This additional structure is the key to the so-called \emph{secondary calculus}. We shall not touch this here and refer the interested reader to~\cite{vino}.
\end{Remark}

Next, we will prepare the notion of formal integrability. Let us f\/irst note the following simple result:

\begin{Propanddef}\label{ope}\quad
\begin{enumerate}\itemsep=0pt
\item[$a)$] For every $h\in \mathrm{D}^k(\pi,\underline{\pi})$ there exists a unique $h^{(l)}\in \mathrm{D}^{k+l}(\pi,\underline{\pi}_l)$ such that the following diagram of set theoretic sheaf morphisms
\begin{gather*}
\begin{xy}
\xymatrix{
\Gamma^{\infty}(\pi_{k+l}) \ar@{-->}[rr]^-{h^{(l)}} && \Gamma^{\infty}(\underline{\pi}_{l})\\	&&\\
\Gamma^{\infty}(\pi)\ar[uu]^{\jet^{k+l}}\ar[rr]^{h\circ \jet^{k}} && \Gamma^{\infty}(\underline{\pi})\ar[uu]_{\jet^{l}}
}
\end{xy}
\end{gather*}
commutes. The partial differential operator $h^{(l)}$ is called the \emph{$l$-jet prolongation of~$h$}.

\item[$b)$] The partial differential operator
\begin{gather*}
\iota^{\pi}_{l,k} \big({=}\mathrm{id}_{\Jet^k(\pi)}^{(l)}\big) \colon \ \Jet^{k+l}(\pi) \longrightarrow \Jet^{l}(\pi_k),\qquad \jet^{k+l}_p\psi\longmapsto \jet^{l}_p(\jet^k\psi)
\end{gather*}
is an embedding of manifolds.
\end{enumerate}
\end{Propanddef}

Let $\IEE\subset \Jet^k(\pi)$ be an arbitrary partial dif\/ferential equation for the moment. Since, by def\/inition, the map
\begin{gather*}
 {\pi_k}_{|\IEE}\colon \ \Jet^k(\pi)\supset \IEE\longrightarrow X
\end{gather*}
is again a f\/ibered manifold, there exists for every $l\in\IN$ an obvious well-def\/ined map
\begin{gather*}
 \iota_{l,\IEE}\colon \ \Jet^l({\pi_k}_{|\IEE})\longrightarrow \Jet^l(\pi_k),
\end{gather*}
which comes from considering a locally def\/ined section in ${\pi_k}_{|\IEE}$ as taking values in $\Jet^k(\pi)$.

\begin{Definition}\label{regu}
 Let $\IEE\subset \Jet^k(\pi)$ be a partial dif\/ferential equation. Then the set
\begin{gather*} 
\IEE^{(l)}:=
\begin{cases}
 \IEE, & \text{for $l=0$},\\
 \iota_{l,k}^ {\pi,-1}\big( \iota_{l,\IEE}\big(\Jet^l(\pi_{k \,|\, \IEE})\big)\big) \subset \Jet^{k+l}(\pi) ,
 & \text{for $l\in \IN^*$},
\end{cases}
\end{gather*}
is called the \emph{$l$-jet prolongation} of $\IEE$.
\end{Definition}

If the underlying partial dif\/ferential equation is actually given by a partial dif\/ferential operator, then there is an explicit description of the corresponding $l$-jet prolongation~\cite[p.~294]{gold}:

\begin{Proposition}\label{oper}
Let $h\in\mathrm{D}^k(\pi,\underline{\pi})$ with constant rank and let $O\in\Gamma^{\infty}(X;\underline{\pi})$ with $\mathrm{im}(O)\subset \mathrm{im}(h)$. Then one has, for every $l\in\IN$,
\begin{gather*}
\mathrm{Z}_{O}(h)^{(l)}= \mathrm{Z}_{\jet^l O}\big( h^{(l)}\big)\subset\Jet^{k+l}(\pi).
\end{gather*}
\end{Proposition}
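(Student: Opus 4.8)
The plan is to reduce the asserted equality to a statement about a single prolonged bundle morphism and then to a local realization lemma. Since $h^{(0)}=h$ and $\jet^0O=O$, the case $l=0$ is just the definition $\IEE=\ker_O(h)$, so I assume $l\geq 1$. First I would introduce the $l$-th prolongation of the fibered morphism $h$ itself, namely
\[
  \Jet^l(h):\Jet^l(\pi_k)\longrightarrow\Jet^l(\underline{\pi}),\quad \jet^l_p\Phi\longmapsto \jet^l_p(h\circ\Phi),
\]
where $\Phi$ runs through local sections of $\pi_k$. This is well defined because $h$ is a morphism of fibered manifolds over $X$: in fibered coordinates the derivatives of $h\circ\Phi$ up to order $l$ at $p$ are, by the chain rule, universal polynomials in the derivatives of $\Phi$ up to order $l$ at $p$ and in those of $h$, so $\jet^l_p(h\circ\Phi)$ depends only on $\jet^l_p\Phi$. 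Comparing the defining property of $h^{(l)}$ in Proposition and Definition \ref{ope} a), namely $h^{(l)}(\jet^{k+l}_p\psi)=\jet^l_p(h\circ\jet^k\psi)$, with $\iota^{\pi}_{l,k}(\jet^{k+l}_p\psi)=\jet^l_p(\jet^k\psi)$, yields the key factorization $h^{(l)}=\Jet^l(h)\circ\iota^{\pi}_{l,k}$.

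Since $\iota^{\pi}_{l,k}$ covers $\id_X$, taking preimages under $\iota^{\pi}_{l,k}$ turns the claim into the single identity
\[
  \iota_{l,\IEE}\big(\Jet^l(\pi_{k\mid\IEE})\big)=\ker_{\jet^l O}\big(\Jet^l(h)\big)\subset\Jet^l(\pi_k),
\]
because $\IEE^{(l)}=(\iota^{\pi}_{l,k})^{-1}\big(\iota_{l,\IEE}(\Jet^l(\pi_{k\mid\IEE}))\big)$ by definition and $\ker_{\jet^l O}(h^{(l)})=(\iota^{\pi}_{l,k})^{-1}\big(\ker_{\jet^l O}(\Jet^l(h))\big)$ by the factorization. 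Unwinding, a point $\jet^l_p\Phi$ lies in the right hand side exactly when $h\circ\Phi$ and $O$ have the same $l$-jet at $p$, and in the left hand side exactly when $\jet^l_p\Phi$ is the $l$-jet of a section $\Phi$ of $\pi_k$ whose image lies in $\IEE$, i.e.\ with $h\circ\Phi=O\circ\pi_k$ (recall $\IEE$ is a fibered submanifold by Proposition \ref{diffop2}). The inclusion $\subseteq$ is then immediate: if $\mathrm{im}(\Phi)\subset\IEE$ then $h\circ\Phi=O\circ\pi_k$ identically, whence a fortiori $\jet^l_p(h\circ\Phi)=\jet^l_p O$.

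The content lies in the reverse inclusion, a local \emph{realization lemma} which is the main obstacle: given a section $\Phi_0$ of $\pi_k$ near $p$ with $\jet^l_p(h\circ\Phi_0)=\jet^l_p O$, I must produce a section $\Phi$ with $\mathrm{im}(\Phi)\subset\IEE$ and $\jet^l_p\Phi=\jet^l_p\Phi_0$. Here I would use the constant rank of $h$ together with $\mathrm{im}(O)\subset\mathrm{im}(h)$: by a fibered constant rank normal form at $a_0:=\Phi_0(p)\in\IEE$ one chooses fibered charts $(x^i,y^\alpha)$ on $\Jet^k(\pi)$ and $(x^i,z^\beta)$ on $\underline{E}$ in which $z^\beta\circ h=y^\beta$ for $\beta\leq r$ and $z^\beta\circ h=0$ for $\beta>r$, with $r$ the constant vertical rank of $h$. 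Since $\mathrm{im}(O)\subset\mathrm{im}(h)$ forces $z^\beta\circ O=0$ for $\beta>r$, one reads off $\IEE=\{\,y^\beta=z^\beta\circ O\circ\pi_k,\ \beta\leq r\,\}$, and the hypothesis $\jet^l_p(h\circ\Phi_0)=\jet^l_p O$ becomes $\jet^l_p\big(y^\beta\circ\Phi_0-z^\beta\circ O\big)=0$ for $\beta\leq r$. One then defines $\Phi$ by replacing the first $r$ fiber components of $\Phi_0$ by $z^\beta\circ O\circ\pi_k$ and keeping all remaining components: this $\Phi$ is a section of $\pi_k$ with image in $\IEE$, while $\Phi-\Phi_0$ vanishes to order $l$ at $p$ by the displayed jet condition, so $\jet^l_p\Phi=\jet^l_p\Phi_0$. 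Applying this with $\Phi_0=\jet^k\psi$ finishes the proof.

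The one step demanding genuine care is the fibered, parameter-dependent version of the constant rank normal form, i.e.\ that the straightening coordinate changes on source and target can be chosen to preserve the fibration over $X$; this is precisely where the constant rank hypothesis on $h$ is used, and I would either cite it or verify it by running the usual constant rank theorem with the base coordinate $x$ carried along as a smooth parameter.
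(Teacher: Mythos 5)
The paper itself gives no proof of Proposition \ref{oper} (it is quoted from \cite{gold}, p.~294), so your argument can only be measured against the standard one, and your architecture is indeed the standard one: the prolonged morphism $\Jet^l(h)$, the factorization $h^{(l)}=\Jet^l(h)\circ\iota^{\pi}_{l,k}$, the reduction to the equality $\iota_{l,\IEE}\big(\Jet^l(\pi_{k|\IEE})\big)=\ker_{\jet^l O}\big(\Jet^l(h)\big)$ inside $\Jet^l(\pi_k)$, the trivial inclusion, and a local realization argument for the converse. All of that is correct. The gap sits in the one sentence carrying the analytic weight: \emph{``Since $\mathrm{im}(O)\subset\mathrm{im}(h)$ forces $z^\beta\circ O=0$ for $\beta>r$.''} That inference is false for constant-rank maps. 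The normal form identifies $h(W)$ with the slice $\{z^\beta=0,\ \beta>r\}$ only for a neighborhood $W$ of $a_0$; the global image $\mathrm{im}(h)$ can re-enter every neighborhood of $O(p)$ along other sheets, and a smooth section can lie in $\mathrm{im}(h)$ without lying in the local slice. Concretely, take $k=0$, $X=(-\delta,\delta)$, $\pi$ the trivial line bundle, $\underline{E}=X\times\IR^2$, and $h(q,s)=(q,\gamma(s))$ with $\gamma$ an immersed curve whose image contains both the axis $\{(t,0)\}$ and the graph of the flat function $f(t)=e^{-1/t^{2}}\sin(1/t)$; then $O(q):=\big(q,(q,f(q))\big)$ satisfies $\mathrm{im}(O)\subset\mathrm{im}(h)$, yet in the normal-form chart at the axis-sheet point over $q=0$ one has $z^2\circ O=f\not\equiv 0$ on every neighborhood of $0$. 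In this example $\ker_O(h)$ is a countable set accumulating at that point, so the conclusion of your realization lemma genuinely fails there; the same example shows that Proposition \ref{diffop2}, as literally stated, needs the same care, and that the hypotheses of Proposition \ref{oper} must in any case be read as guaranteeing that $\ker_O(h)$ is a PDE, since otherwise its prolongation $\ker_O(h)^{(l)}$ is not even defined.

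The repair stays inside your own framework. Granting that $\IEE:=\ker_O(h)$ is a PDE (Proposition \ref{diffop2}, which you already cite for a different purpose), the map $\pi_{k|\IEE}$ is a submersion, hence open; so $V:=\pi_k(\IEE\cap W)$ is an open neighborhood of $p$ in $X$. For each $q\in V$ choose $a\in\IEE\cap W$ over $q$; then $O(q)=h(a)$ and $z^\beta(h(a))=0$ for $\beta>r$ because $a\in W$. Hence $z^\beta\circ O$ vanishes identically on $V$ for $\beta>r$ --- which is exactly the statement you needed --- and on $W\cap\pi_k^{-1}(V)$ one gets $\IEE=\{y^\beta=z^\beta\circ O\circ\pi_k,\ \beta\le r\}$. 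From there your construction of $\Phi$ (replace the first $r$ fiber components of $\Phi_0$ by $z^\beta\circ O$, keep the remaining ones) and the order-$l$ jet comparison go through verbatim, and the proof closes. In short: right decomposition and right realization idea, but the crucial local description of $\IEE$ must be deduced from the openness of $\pi_{k|\IEE}$, i.e.\ from Proposition \ref{diffop2}, not from $\mathrm{im}(O)\subset\mathrm{im}(h)$ alone.
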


Let us note the simple fact that the following diagramm commutes, for every $r\in\IN$,
\begin{gather*}
\begin{xy}
\xymatrix{
\Jet^{k+l+r}(\pi)\ar[rr]^-{\iota^{\pi}_{l+r,k}} \ar[dd]_-{\pi_{k+l,k+l+r}} && \Jet^{l+r}(\pi_k)\ar[dd]^-{(\pi_k)_{l+r,l}} \\	&&\\
\Jet^{k+l}(\pi)\ar[rr]^{\iota^{\pi}_{l,k}} && \Jet^l(\pi_k)
}
\end{xy}
\end{gather*}
Applying this in the case $r=1$ implies $\pi_{k+l,k+l+1}\big(\IEE^{(l+1)}\big)\subset \IEE^{(l)}$ for any partial dif\/ferential equation $\IEE\subset \Jet^k(\pi)$ and every $l\in\IN$, so that we obtain the maps
\begin{gather}
 \IEE^{(l+1)}\longrightarrow \IEE^{(l)},\qquad a\longmapsto \pi_{k+l,k+l+1}(a). \label{for}
\end{gather}
Now we have the tools to give
\begin{Definition}\label{formi}
A partial dif\/ferential equation $\IEE\subset \Jet^k(\pi)$ is called \emph{formally integrable}, if $\IEE^{(l)}$ is a submanifold of $\Jet^{k+l}(\pi)$ and if~(\ref{for}) is a f\/ibered manifold for every $l\in\IN$.
\end{Definition}

\begin{Remark}\quad
\begin{enumerate}\itemsep=0pt
\item[a)] Here, it should be noted that $E$ itself can always be considered as a trivial formally integrable partial dif\/ferential equation on $\pi$ of order $0$, where in this case one has $E^{(l)}=\Jet^l(\pi)$ for all $l\in\IN$.

\item[b)] Furthermore, there are abstract cohomological tests for partial dif\/ferential equations to be formally integrable~\cite{gold}. In fact, we will use such a test in the proof of Theorem~\ref{mai} below; we refer the reader to~\cite{pomm} and particularly to~\cite{seiler} for the algorithmic aspects of these tests. Although it can become very involved to verify these test properties in particular examples, formal integrability could be shown for some particular equations of mathematical physics such as the Yang--Mills--Higgs equations \cite{gia2,gia} or the Einstein's f\/ield equations~\cite{krug1}. In accordance with this, Theorem \ref{mai} below states that all reasonable (possibly nonlinear) scalar partial dif\/ferential equations are formally integrable. But formal integrability need not always be given, as the example of a system of PDEs describing the isometries of a~Riemannian metric shows, which is formally integrable if and only if the corresponding curvature is constant. See Pommaret \cite[Introduction, p.~9]{pommPDEGT} for more information.
\end{enumerate}
\end{Remark}

An important purely analytic consequence of formal integrability is given by the highly nontrivial Theorem~\ref{analytic} below, which essentially states
that if all underlying data are real analytic, then formal integrability implies the existence of local analytic solutions with prescribed f\/inite order
Taylor expansions. Theorem~\ref{analytic} goes back to Goldschmidt~\cite{gold} and heavily relies on (cohomological) results by Spencer~\cite{spencer} and
Ehrenpreis--Guillemin--Sternberg~\cite{ehren}. This result can also be regarded as a variant of Michael Artin's approximation theorem~\cite{artin} or as an interpretation of the Cartan--K\"ahler theorem, cf.~\cite[Theorem~4.4.3]{pomm78} and~\cite{bryant}.

\begin{Theorem}\label{analytic} Assume that $X$ is real analytic, that $\pi$ is a real analytic fiber bundle $($then so is~$\pi_k)$, and that $\IEE\subset \Jet^k(\pi)$ is formally integrable such that in fact $\pi_{k \,|\, \IEE}\colon \IEE\to X$ is a real analytic fibered submanifold of~$\pi_k$. Then, for every $l\in\IN$ and $a\in \IEE^{(l)}$ there exists an open neighborhood $U\subset X$ of $\pi_{k+l}(a)$ and a real analytic solution $\psi\in\Gamma^{\infty}(U;\pi)$ of $\IEE$ such that $\jet^{k+l}_{\pi_{k+l}(a)}\psi=a$.
\end{Theorem}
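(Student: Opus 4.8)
The plan is to recognize this as the geometric form of Goldschmidt's analytic existence theorem and to split the argument into a \emph{formal} stage and an \emph{analytic} stage. For the formal stage, recall that by Definition \ref{formi} formal integrability means exactly that each $\IEE^{(l)}\subset\Jet^{k+l}(\pi)$ is a submanifold and that every projection (\ref{for}), from $\IEE^{(l+1)}$ to $\IEE^{(l)}$, is a fibered manifold, in particular a surjective submersion. Hence, starting from $a_l:=a\in\IEE^{(l)}$, I can inductively pick $a_{l+r}\in\IEE^{(l+r)}$ with $\pi_{k+l+r-1,k+l+r}(a_{l+r})=a_{l+r-1}$ for all $r\ge 1$. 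By the compatibility (\ref{proj}) the elements $a_{l+r}$ together with the lower projections $\pi_{i,k+l}(a)$, $-1\le i<k+l$, form a thread in the projective system $(\Jet^i(\pi),\pi_{ij})$ over $p:=\pi_{k+l}(a)$, and therefore determine an $\infty$-jet lying in every prolongation $\IEE^{(l+r)}$; this is a formal solution of $\IEE$ whose $(k+l)$-jet at $p$ equals $a$.

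It remains to show that this formal solution is the Taylor expansion at $p$ of a genuine local \emph{analytic} section, and this is precisely the content of Goldschmidt's theorem \cite{gold}. I would apply it after matching hypotheses: $X$, $\pi$, $\pi_k$ and $\IEE$ are all real analytic, and $\IEE$ is formally integrable in the sense of Definition \ref{formi}, which is the notion used by Goldschmidt; when the equation arises from an operator, Proposition \ref{oper} lets one pass freely between $\ker_O(h)^{(l)}$ and $\ker_{\jet^l O}(h^{(l)})$, so the reformulation is harmless. The mechanism behind Goldschmidt's theorem is cohomological: by the prolongation theorem the symbol of $\IEE$ becomes $2$-acyclic (involutive) after finitely many prolongations, and for an involutive analytic symbol the Cartan--K\"ahler majorant method bounds the growth of the Taylor coefficients of the formal thread, yielding convergence on some neighborhood $U$ of $p$. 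The requisite vanishing of obstruction cohomology is exactly what is supplied by Spencer \cite{spencer} and Ehrenpreis--Guillemin--Sternberg \cite{ehren}.

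The whole difficulty sits in this second stage. Formal integrability produces the formal thread for free, but gives no control whatsoever over the size of the Taylor coefficients; the passage from a formal power series solution to a convergent analytic one is the hard analytic heart of the theorem, resting on involutivity of the symbol together with the $\delta$-Poincar\'e lemma and Cartan--K\"ahler estimates. I would therefore cite this convergence result rather than reprove it. Granted convergence, the resulting $\psi\in\Gamma^\infty(U;\pi)$ is analytic, solves $\IEE$, and satisfies $\jet^{k+l}_{p}\psi=a$ by construction, which is the assertion.
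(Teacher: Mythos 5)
Your proposal is correct and takes essentially the same route as the paper, whose entire proof is the citation of Goldschmidt's Theorem~9.1 in combination with Proposition~7.1 of \cite{gold}; your formal-thread construction and your sketch of the cohomological and majorant mechanism are simply an expanded account of what lies behind that citation. The only slight imprecision is the suggestion that the \emph{particular} thread you lift is itself the Taylor expansion of an analytic section---an arbitrary choice of lifts need not yield a convergent series, and Goldschmidt's theorem instead produces \emph{some} analytic solution with the prescribed finite-order jet---but since you ultimately cite the theorem rather than prove convergence of your own thread, the argument stands.
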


\begin{proof} This result follows directly from Theorem~9.1 in~\cite{gold} (in combination with Proposition~7.1 therein).
\end{proof}

Now let $\IEE\subset \Jet^k(\pi)$ be a formally integrable partial dif\/ferential equation. Then we can def\/ine a subset $\IEE^{(\infty)}\subset \Jet^{\infty}(\pi)$ by $\IEE^{(\infty)}:= \pi^{-1}_{\infty,k} (\IEE)$. Inductively, one checks that the maps~(\ref{initial}) restrict to surjective maps
\begin{alignat*}{3}
& \IEE^{(\infty)} \longrightarrow \IEE^{(i)},\qquad && a\longmapsto\pi_{k+i,\infty}(a),\qquad i\in\IN, & \\ 
& \IEE^{(\infty)} \longrightarrow X,\qquad && a\longmapsto \pi_{k-1,\infty}(a), & 
\end{alignat*}
so that
\begin{gather*}
\IEE^{(\infty)}= \bigcap_{i\in\IN} \pi^{-1}_{\infty,k+i} (\IEE) .
\end{gather*}

In other words, this means that axioms \ref{axiomsubmfd} to \ref{axiomsubmers} are satisf\/ied for the subset $\IEE^{(\infty)} \subset \Jet^{\infty}(\pi)$ and the smooth projective representation $\big( \Jet^i(\pi),\pi_{i,j}, \pi_{i,\infty} \big)$. Hence, one readily obtains

\begin{Propanddef}\label{defS}
Let $\IEE\subset \Jet^k(\pi)$ be a formally integrable partial differential equation. Then $\big( \Jet^i(\pi),\pi_{i,j}, \pi_{i,\infty} \big)$ induces on $\IEE^{(\infty)}$ the structure of a prof\/inite dimensional submanifold of $(\Jet^{\infty}(\pi),\mathscr{C}^{\infty}_{\pi})$. In view of this fact, $\IEE^{(\infty)}$ will be called the \emph{manifold of formal solutions of $\IEE$.}
\end{Propanddef}

\begin{Remark}
In the above situation, the Cartan distribution on $\Jet^{\infty}(\pi)$ restricts to a well-def\/ined distribution on $\IEE^{(\infty)}$.
\end{Remark}

\subsection{Scalar PDEs and interacting relativistic scalar f\/ields}\label{scal}

Let us f\/irst clarify that throughout Section \ref{scal}, $\pi\colon X\times\IR\to X$ will denote the canonical line bundle.

\subsubsection{A criterion for formal integrability of scalar PDEs}

We now come to the aforementioned result on formal integrability of PDEs. In order to keep the notation simple and in view of the applications that we have in mind, we restrict ourselves in this paper to scalar PDEs.

In the scalar situation, the sheaf of sections of $\pi$ can be identif\/ied with the sheaf of smooth functions on $X$. Recall that the space of smooth functions def\/ined near $p\in X$ is denoted by $\mathscr{C}^{\infty}(p;X)$. Likewise, the space of $k$-th order partial dif\/ferential operators $\mathrm{D}^k(\pi,\pi)$ can be canonically identif\/ied as a linear space with $\mathscr{C}^{\infty}(\Jet^k(\pi))$. Given such an $h\in\mathrm{D}^k(\pi,\pi)$, the space of vector bundle morphisms
\begin{gather*}
\pi^*_{k}\Sym^{k}(\pi_{\IT^*X})\otimes \pi^*_{k,0}\Verti(\pi)\longrightarrow \Verti(\pi_k)
\end{gather*}
over $h$ can be identif\/ied canonically as a linear space (remember here the maps (\ref{iso2}) and (\ref{iso3})) with the space of vector bundle morphisms $\pi^*_{k}\Sym^{k}(\pi_{\IT^*X})\to X\times \IR$ over $\pi_k$. It follows that for every $a\in \Jet^k(\pi)$ the symbol $\sigma(h)$ induces a linear map
\begin{gather*}
 {\sigma(h)}_{|\Sym^{k}\big(\IT^*_{\pi_k(a)}X\big)}\colon \ \Sym^{k}\big(\IT^*_{\pi_k(a)}X\big)\longrightarrow \{\pi_k(a)\}\times \IR .
\end{gather*}

With these preparations, we have:

\begin{Theorem}\label{mai}
 Let $h\in\mathscr{C}^{\infty}(\Jet^k(\pi))$, and recall that
\begin{gather*}
\mathrm{Z}_0(h):= \{a \in \Jet^k(\pi) \, | \, h(a)=0\}.
\end{gather*}
Assume furthermore that the following assumptions are satisfied:
\begin{enumerate}\itemsep=0pt
\item[$(1)$] 
 One has ${\sigma(h)}_{|a}\ne 0$ for all $a\in \Jet^k(\pi)$.

\item[$(2)$] 
 The map $\mathrm{Z}_0(h)^{(1)}\to \mathrm{Z}_0(h)$, $a\mapsto \pi_{k,k+1}(a)$ is surjective.
\end{enumerate}
Then $\mathrm{Z}_0(h)\subset\Jet^k(\pi)$ is a formally integrable partial differential equation on $\pi$.
\end{Theorem}

\begin{Remark}
 Note that assumption (1) together with Proposition \ref{susub} implies that $\mathrm{Z}_0(h)$ indeed is a partial dif\/ferential equation, as $\sigma(h)$ is surjective.
\end{Remark}

\begin{proof}[Proof of Theorem \ref{mai}] The seemingly short proof that we are going to give actually combines two heavy machineries: The already mentioned abstract cohomological criterion for formal integrability of partial dif\/ferential equations from~\cite{gold}, with a highly nontrivial reduction result for the cohomology of Cohen--Macaulay symbolic systems~\cite{krug2}. There seems to be no reasonable elementary proof of Theorem~\ref{mai}.

First observe that (1) is equivalent to
\begin{enumerate}\itemsep=0pt
\item[$(1')$]
 For all $a\in \Jet^k(\pi)$ there exists $v\in \IT^*_{\pi_k(a)}X$ such that $\sigma(h)(v^{\otimes k})\ne 0$.
\end{enumerate}
Namely, if $\sigma_a(h)(v^{\otimes k}) = 0$ for all $v\in \IT^*_{\pi_k(a)}X$, then $\sigma_a(h) =0$ by polarization.

To prove our claim, let an arbitrary $a\in\Jet^k(\pi)$ be given and pick some $v\in\IT^*_{\pi_k(a)} X$ with $\sigma(h)(v^{\otimes k})\ne 0$. Then, in the terminology of~\cite{krug1},
\begin{gather*}
 V^*:= \mathbb{C} v \subset\big(\IT^*_{\pi_k(a)} X\big)_{\mathbb{C}}
\end{gather*}
is a one-dimensional noncharacteristic subspace corresponding to the Cohen--Macaulay symbolic system $\mathrm{g}(h;a)$ given by $\mathrm{Z}_0(h)$ over $a$. Thus we may apply Theorem~A from~\cite{krug2} to deduce that all Spencer cohomology groups $\mathrm{H}^{i,j}(\mathrm{g}(h;a))$ except possibly $\mathrm{H}^{0,0}(\mathrm{g}(h;a))$ and $\mathrm{H}^{1,1}(\mathrm{g}(h;a))$ vanish. But now the result follows from combining~(2), \cite[Theorem~8.1]{gold} and \cite[Proposition~7.1]{gold}, once we have shown that the f\/irst prolongation
\begin{gather}\label{aposs}
 \bigcup_{a\in \mathrm{Z}_0(h)}\mathrm{g}(h;a)^{(1)}\longrightarrow \mathrm{Z}_0(h)
\end{gather}
becomes a vector bundle. To prove the latter claim, note f\/irst that $h$ induces the morphism of vector bundles
\begin{gather*}
\sigma(h)^{(1)}\colon \ \pi_k^*\Sym^{k+1}(\pi_{\IT^*X})\longrightarrow \pi_k^*\IT^*X,
\end{gather*}
which, for every $a\in \Jet^k(\pi)$, is given by
\begin{align*}
 {\sigma(h)^{(1)}}_{|\Sym^{k+1}\big(\IT^*_{\pi_k(a)}X\big)}\colon \
 & \Sym^{k+1}\big(\IT^*_{\pi_k(a)}X\big)\longrightarrow \IT^*_{\pi_k(a)}X,\\
 & v_1\odot\cdots\odot v_{k+1}\longmapsto \underbrace{{\sigma(h)} (v_2\odot\cdots\odot v_{k+1})}_{\in\IR} v_1.
\end{align*}
The assumption (1) immediately implies that the latter linear map is surjective (in fact for every $a\in \Jet^k(\pi)$), in particular, as one has~\cite{gold}
\begin{gather*}
\mathrm{g}(h;a)^{(1)}=\ker \Big( {\sigma(h)^{(1)}}_{|\Sym^{k+1}\big(\IT^*_{\pi_k(a)}X\big)}\Big),
\end{gather*}
it follows that (\ref{aposs}) is a vector bundle.
\end{proof}

The assumption (2) from Theorem~\ref{mai} is a technical regularity assumption (which can become tedious to check in applications), whereas the reader should notice that assumption~(1) therein is essentially trivial and means nothing but that the underlying dif\/ferential operator globally is a~``genuine'' $k$-th order operator.

\subsubsection{Interacting relativistic scalar f\/ields}

As an application of Theorem~\ref{mai}, we will now consider evolution equations that correspond to (possibly nonlinearly!) interacting relativistic scalar f\/ields on semi-Riemannian manifolds. To this end, let~$(X,\mathsf{g})$ be a smooth Lorentzian $m$-manifold. The corresponding d'Alembert operator will be written as
\begin{gather*}
\Box_{\mathsf{g}}\colon \ \mathscr{C}^{\infty}(X)\longrightarrow\mathscr{C}^{\infty}(X).
\end{gather*}
With functions $F_1,F_2\in\mathscr{C}^{\infty}(X)$, $K\in\mathscr{C}^{\infty}(\IR)$, we consider the partial dif\/ferential operator $h_{\mathsf{g},F_1,F_2,K}\in\mathscr{C}^{\infty}(\Jet^2(\pi))$ given for $p\in X$, $\varphi\in \mathscr{C}^{\infty}(p;X)$ by
\begin{gather*}
h_{\mathsf{g},F_1,F_2,K}\big(\jet^2_p\varphi\big) := \Box_{\mathsf{g}}\varphi(p) +F_1(p) \varphi(p)+F_2(p) K(\varphi(p)).
\end{gather*}
What we have in mind here is:

\begin{Example}
Let us assume that $m=4$, that $(X,\mathsf{g})$ has a Lorentz signature, and that
\begin{gather*}
F_1=\alpha_1 \operatorname{scal}_{\mathsf{g}}+\alpha_2^2,\qquad F_2\equiv 1,\qquad K=\alpha_3 \underline{K},
\end{gather*}
where $\alpha_1,\alpha_3\in\IR$, $\alpha_2\geq 0$, $\mathrm{scal}_{\mathsf{g}}\in\mathscr{C}^{\infty}(X)$ denotes the scalar curvature of $\mathsf{g}$ and $\underline{K}\in\mathscr{C}^{\infty}(\IR)$. Then $\mathrm{Z}_0(h_{\mathsf{g},F_1,0,K})\subset \Jet^2(\pi)$ describes the on-shell dynamics of a relativistic (real) scalar f\/ield with mass $\alpha_2$, where $\underline{K}$ is the f\/ield self-interaction with coupling strength $\alpha_3$, and where the number $\alpha_1$ is an additional parameter, which is sometimes set equal to zero. For example, $\underline{K}(z)= z^3$ corresponds to what is called a~\emph{$\varphi^4$-perturbation} in the physics literature (since the corresponding potential in the Lagrange density which has $\mathrm{Z}_0(h_{\mathsf{g},F_1,0,K})$ as its Euler--Lagrange equation is given by $V(\varphi)= \varphi^4$). We refer the reader to~\cite{finster} for the perturbative aspects of this equation in the f\/lat $\varphi^4$ case.
\end{Example}

Returning to the general situation, we can now prove the following result on scalar partial dif\/ferential equations on semi-Riemannian manifolds:

\begin{Proposition}\label{sxcal}
In the above situation, the assumptions {\rm (1)} and {\rm (2)} from Theorem~{\rm \ref{mai}} are satisfied by $h_{\mathsf{g},F_1,F_2,K}$. In particular, $\mathrm{Z}_0(h_{\mathsf{g},F_1,F_2,K})\subset \Jet^2(\pi)$ is formally integrable, and the corresponding space of formal solutions canonically becomes a profinite dimensional manifold via Proposition~{\rm \ref{defS}}.
\end{Proposition}

\begin{proof} In view of Theorem \ref{analytic}, we only have to prove that the assumptions~(1),~(2) from Theorem~\ref{mai} are satisf\/ied. To this end, we set $h:=h_{\mathsf{g},F_1,F_2,K}$ and assume $F_2=0$. Firstly, in view of
\begin{gather*}
 {\sigma(h)}_{|\Sym^{2}\big(\IT^*_{\pi_2(a)}X\big)}(v\odot v)=\mathsf{g}^*_{\pi_2(a)}(v,v)\qquad \text{for all} \quad a\in \Jet^2(\pi), \quad v\in\IT^*_{\pi_2(a)} X,
\end{gather*}
assumption (1) is obviously satisf\/ied and $\mathrm{Z}_0(h)$ indeed is a partial dif\/ferential equation.

It remains to prove that the map $\mathrm{Z}_0(h)^{(1)}\to \mathrm{Z}_0(h)$, $a\mapsto \pi_{2,3}(a)$ is surjective. To see this, assume to be given $b\in \mathrm{Z}_0(h)$ and consider a $\mathsf{g}$-exponential manifold chart $\tilde{x}\colon U\to \IR^m$ of $X$ centered at $\pi_2(b)$. Then one gets the trivial f\/ibered chart
\begin{gather*}
 (x,u):=(\tilde{x},\mathrm{id}_\IR)\colon \ U\times \IR\longrightarrow \IR^m\times \IR
\end{gather*}
of $\pi$, and $b\in \mathrm{Z}_0(h)$ means nothing but $b\in \Jet^2(\pi)$ and
\begin{gather*}
 \sum^m_{i,j=1} \mathsf{g}^{ij}(\pi_2(b)) u_{2,1_{ij}}(b)- \sum^m_{i,j,k=1}\mathsf{g}^{ij}(\pi_2(b))\Gamma^{k}_{ij}(\pi_2(b))u_{2,1_k}(b) \\
\qquad{} +F_1(\pi_2(b))u_{2,(0,\dots,0)}(b)+K(u_{2,(0,\dots,0)}(b)) = 0,
\end{gather*}
where $\mathsf{g}^{ij},\Gamma^{k}_{ij}\in\mathscr{C}^{\infty}(U)$ denote the components of the metric tensor and the Christof\/fel symbols of $g$ with respect to $\tilde{x}$, respectively. Noting that Proposition~\ref{oper} implies $\mathrm{Z}_0(h)^{(1)}=\mathrm{Z}_0(h^{(1)})$, one easily f\/inds that some $a\in \Jet^3(\pi)$ is in $\mathrm{Z}_0(h)^{(1)}$, if and only if
\begin{gather*}
\sum^m_{i,j=1}\mathsf{g}^{ij}(\pi_3(a))u_{3,1_{ij}}(a) -\sum^m_{i,j,k=1}\mathsf{g}^{ij}(\pi_3(a))\Gamma^{k}_{ij}(\pi_3(a))u_{1_k}(a) \\
\qquad{} +F_1(\pi_3(a))u_{3,(0,\dots,0)}(a)+K\big(u_{3,(0,\dots,0)}(a)\big)=0,
\end{gather*}
and, for all $l=1,\dots,m$,
\begin{gather*}
\sum^m_{i,j=1}\big(\partial_l\mathsf{g}^{ij}(\pi_3(a))u_{3,1_{ij}}(a)+\mathsf{g}^{ij}(\pi_3(a))u_{3,1_{ijl}}(a)\big)\\
\qquad{} -\sum^m_{i,j,k=1} \big(\partial_l\mathsf{g}^{ij}(\pi_3(a))\Gamma^k_{ij}(\pi_3(a))u_{3,1_{k}}(a)-\mathsf{g}^{ij}(\pi_3(a))\partial_l\Gamma^k_{ij}(\pi_3(a))u_{3,1_{k}}(a)\big) \\
\qquad{} -\sum^m_{i,j,k=1}\mathsf{g}^{ij}(\pi_3(a))\Gamma^k_{ij}(\pi_3(a))u_{3,1_{lk}}(a)+ \partial_l F_1(\pi_3(a))u_{3,(0,\dots,0)}(a) \\
\qquad{} + F_1(\pi_3(a))u_{3,1_{l}}(a)+K'\big(u_{3,(0,\dots,0)}(a)\big)u_{3,1_l}(a) = 0 .
\end{gather*}
Here, we have used $\partial_l:=\f{\partial}{\partial \tilde{x}^l}$. Let us now assume that the signature of $\mathsf{g}$ is given by $(\varepsilon_1,\dots,\varepsilon_m)=(1,-1,\dots,-1)$. The general case can be treated with the same method. We def\/ine some $a\in \Jet^3(\pi)$ by requiring $\tilde{x}_3(a):=\tilde{x}(\pi_2(b))$, and, for $I\in\IN^m_{0,3}$,
\begin{gather*}
u_{3,I} (a):=
\begin{cases}
 u_{2,I}(b), & \text{if $I\in\IN^m_{0,2}$,}\\
 -\sum\limits^m_{i,j=1}\partial_l\mathsf{g}^{ij}(\pi_2(b))u_{2,1_{ij}}(b) +\sum\limits^m_{i,j,k=1}\partial_l\mathsf{g}^{ij}(\pi_2(b))\Gamma^k_{ij}(\pi_2(b))u_{2,1_{k}}(b) \hspace{-20em} \\
\quad{} +\sum\limits^m_{i,j,k=1}\mathsf{g}^{ij}(\pi_2(b))\partial_l\Gamma^k_{ij}(\pi_2(b))u_{2,1_{k}}(b)  +\sum\limits^m_{i,j,k=1}\mathsf{g}^{ij}(\pi_2(b))\Gamma^k_{ij}(\pi_2(b))u_{2,1_{lk}}(b) \hspace{-20em} \\
 \quad{} -\partial_l F_1(\pi_2(b))u_{2,(0,\dots,0)}(b)-F_1(\pi_2(b))u_{2,1_{l}}(b) \hspace{-20em} \\
\quad{} -K'\big(u_{2,(0,\dots,0)}(b)\big)u_{2,1_l}(b), & \text{if $I=1_{11l}$ for some $l=1,\dots,m$,}\\
 0, & \text{else}.
\end{cases}
\end{gather*}
Now we are almost done: Indeed, our construction of $a$ directly gives $\pi_{2,3}(a)=b$, so $\pi_3(a)=\pi_2(b)$. Since we have
\begin{gather*}
 \mathsf{g}^{ij}(\pi_3(a))=\mathsf{g}^{ij}(\pi_2(b))=
 \begin{cases}
 \varepsilon_j, & \text{if $i=j$},\\
 0, & \text{else},
 \end{cases}
\end{gather*}
it follows immediately that $a\in\mathrm{Z}_0(h)^{(1)}$, and the proof is complete, noting that $F_2$ has not played a role in the above argument.
\end{proof}

\appendix

\section{Two results on completed projective tensor products}

Assume to be given two locally convex topological vector spaces $V$ and~$W$, and consider their algebraic tensor product $V \otimes W$. A topology $\tau$ on
$V \otimes W$ is called \emph{compatible} (in the sense of Grothendieck~\cite{GroPTTEN}) or a \emph{tensor product topology}, if the following axioms hold true:
\begin{enumerate}[itemindent=0mm,leftmargin=4em,labelwidth=0mm,labelsep=2mm,align=right,label={\rm (TPT\arabic*)}]\itemsep=0pt
\item $V \otimes W$ equipped with $\tau$ is a locally convex topological vector space which will be denoted by $V \otimes_\tau W$.
\item The canonical map $V \times W \rightarrow V \otimes_\tau W$ is separately continuous.
\item For every equicontinuous subset $A$ of the topological dual $V'$ and every equicontinuous subset $B$ of the topological dual $W'$, the set $A\otimes B := \{ \lambda \otimes \mu \,|\, \lambda \in A , \, \mu \in B\}$ is an equicontinuous subset of $\big( V \otimes_\tau W \big)'$.
\end{enumerate}
 If $\tau$ is a tensor product topology on $V\otimes W$, we denote by $V\widehat{\otimes}_\tau W$ the completion of $V \otimes_\tau W$.

\begin{Example}\label{ExTensorProds}\quad
\begin{enumerate}\itemsep=0pt
\item[a)] 
The \emph{projective tensor product topology} is the f\/inest locally convex vector space topology on $V\otimes W$ such that the canonical map $V \times W \rightarrow V \otimes W$ is continuous, cf.~\cite{GroPTTEN,TreTVSDK}. The projective tensor product topology is denoted by $\pi$. It is generated by seminorms $p_A \otimes_\pi q_B $, where $p_A$, $A \in \mathcal{A}$ and $q_B$, $B \in \mathcal{B}$ each run through a family of seminorms generating the locally convex topology on $V$ respectively $W$, and $ p_A \otimes_\pi q_B$ is def\/ined by
 \begin{gather*}
 p_A \otimes_\pi q_B (z) := \inf \left\{ \sum_{l=1}^n p_A(v_l) \, q_B (w_l) \,|\, z = \sum_{l=1}^n v_l \otimes w_l \right\} .
 \end{gather*}
The seminorm $p_A \otimes_\pi q_B$ is in particular a \emph{cross seminorm}, i.e., it satisf\/ies the relation
 \begin{gather*}
 p_A \otimes_\pi q_B (v \otimes w) = p_A(v) q_B(w) \qquad \text{for all} \quad v\in V, \quad w\in W.
 \end{gather*}
\item[b)] The \emph{injective tensor product topology} on $V\otimes W$, denoted by $\varepsilon$, is the locally convex topology inherited from the canonical embedding $V\otimes W \hookrightarrow \mathcal{B}_s (V_s' \otimes W_s')$, where $\mathcal{B}_s (V' , W')$ denotes the space of separately continuous bilinear forms on the product~$V' \times W'$ of the weak topological duals~$V'$ and~$W'$ endowed with the topology of uniform convergence on products of equicontinuous subsets of~$V'$ and~$W'$. See~\cite{GroPTTEN} and \cite[Section~43]{TreTVSDK} for details.
\end{enumerate}
\end{Example}

\begin{Remark}\quad
\begin{enumerate}\itemsep=0pt
\item[a)] By def\/inition, the $\varepsilon$-topology on $V\otimes W$ is coarser than the $\pi$-topology. If $V$ (or $W$) is a nuclear locally convex topological vector space, then these two topologies coincide, cf.~\cite{GroPTTEN,TreTVSDK}. Since f\/inite dimensional vector spaces over $\IR$ are nuclear, this entails in particular that for f\/inite dimensional $V$ and $W$ the natural vector space topology on $V \otimes W$ coincides with the (completed) $\pi$- and $\varepsilon$-topology.
\item[b)] The projective tensor product, the injective tensor product, and their completed versions are in fact functors, so it is clear what is meant by $f \otimes_\varepsilon g$, $ f \widehat{\otimes}_\pi g$, and so on, where~$f$ and~$g$ denote continuous linear maps.
\end{enumerate}
\end{Remark}

\begin{Theorem}
Let $( V_i )_{i\in\IN}$ and $( W_i)_{i\in\IN}$ be two families of finite dimensional real vector spaces. Denote by $V$ and $W$ their respective product $($within the category of locally convex topological vector spaces$)$, i.e., let
 \begin{gather*}
 V := \prod\limits_{i\in \IN} V_i \qquad \text{and} \qquad W := \prod\limits_{i\in \IN} W_i .
 \end{gather*}
 Then $V$, $W$, and the completed projective tensor product $V \widehat{\otimes}_\pi W$ are nuclear Fr\'echet spaces. Moreover, one has the canonical isomorphism
 \begin{gather} \label{Eq:RepPiTensorProductProducts}
 V \widehat{\otimes}_\pi W \cong \prod_{(k,l) \in \IN \times \IN} V_k \otimes W_l .
 \end{gather}
\end{Theorem}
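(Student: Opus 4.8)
The plan is to handle the two assertions separately: the structural statement follows from standard permanence properties, while the isomorphism \eqref{Eq:RepPiTensorProductProducts} is obtained by realizing $V$ and $W$ as reduced projective limits of finite-dimensional spaces and commuting $\widehat{\otimes}_\pi$ past these limits. First I would record that every finite-dimensional real vector space is a nuclear Fr\'echet space, that a countable product of Fr\'echet spaces is again Fr\'echet (the product topology being metrizable and complete), and that countable products of nuclear spaces are nuclear; cf.~\cite{GroPTTEN,TreTVSDK}. Hence $V$ and $W$ are nuclear Fr\'echet spaces. Since $V\otimes_\pi W$ is metrizable (its topology is generated by the countable family of cross seminorms attached to coordinate norms on the factors) and the projective tensor product of two nuclear spaces is again nuclear, the completion $V\widehat{\otimes}_\pi W$ is a nuclear Fr\'echet space, which settles the first claim.

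For the isomorphism, put $V^{[n]}:=\prod_{i=0}^{n}V_i$ and $W^{[m]}:=\prod_{j=0}^{m}W_j$, equipped with the coordinate projections. These are finite-dimensional, and $V=\lim\limits_{\longleftarrow \atop n\in\IN}V^{[n]}$ and $W=\lim\limits_{\longleftarrow \atop m\in\IN}W^{[m]}$ are reduced projective limits of Fr\'echet spaces. The decisive step is that the completed projective tensor product commutes with these limits, giving
\[
  V\widehat{\otimes}_\pi W\cong\lim\limits_{\longleftarrow \atop (n,m)\in\IN\times\IN}\big(V^{[n]}\widehat{\otimes}_\pi W^{[m]}\big).
\]
Because the factors are finite-dimensional, the completed tensor product there is the algebraic one, and tensoring distributes over the finite direct sums, so that
\[
  V^{[n]}\widehat{\otimes}_\pi W^{[m]}=V^{[n]}\otimes W^{[m]}=\bigoplus_{0\leq k\leq n,\,0\leq l\leq m}V_k\otimes W_l,
\]
the transition maps being the coordinate projections. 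Taking the projective limit of these finite direct sums over the directed set $\IN\times\IN$ reproduces exactly the full product $\prod_{(k,l)\in\IN\times\IN}V_k\otimes W_l$, which is the asserted isomorphism; it is the canonical one, induced by the jointly continuous bilinear map $(v,w)\mapsto(v_k\otimes w_l)_{(k,l)}$.

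The main obstacle is the commutation step, i.e.~justifying that $\widehat{\otimes}_\pi$ passes through the double projective limit over $\IN\times\IN$. I would base this on Grothendieck's permanence theorems for projective tensor products of Fr\'echet spaces \cite{GroPTTEN}; alternatively, since $V$ and $W$ are nuclear, one may replace $\widehat{\otimes}_\pi$ by $\widehat{\otimes}_\varepsilon$ and invoke the good behaviour of the injective tensor product under projective limits \cite[Sec.~43]{TreTVSDK}. If one prefers a self-contained argument, the universal property of the projective tensor product suffices: a jointly continuous bilinear map on $V\times W$ with values in a complete space is bounded by a constant multiple of $p(v)\,q(w)$ for continuous seminorms $p,q$ each depending on only finitely many coordinates, hence it factors through some $V^{[n]}\times W^{[m]}$ and thus through $\bigoplus_{k\leq n,\,l\leq m}V_k\otimes W_l$. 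Since continuous linear maps out of the product $\prod_{(k,l)}V_k\otimes W_l$ likewise factor through finite sub-sums, and finite coordinate rectangles are cofinal among finite index sets, both spaces represent the same functor of continuous bilinear forms and are therefore canonically isomorphic.
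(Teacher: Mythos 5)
Your main route is correct, but it is organized quite differently from the paper's proof. The paper argues by hand: it embeds $V\otimes W$ into $\prod_{(i,j)\in\IN\times\IN}V_i\otimes W_j$ via the canonical algebraic injection $\iota$ (citing Bourbaki), verifies through a cross-seminorm computation that the $\pi$-topology on $V\otimes W$ is exactly the topology pulled back from the product along $\iota$, and then shows by an explicit truncation argument that the image of $\iota$ is dense; the isomorphism follows by passing to completions, and nuclearity together with the Fr\'echet property of $V\widehat{\otimes}_\pi W$ are then read off from the right-hand side of the isomorphism. You instead present $V$ and $W$ as projective limits of the finite partial products $V^{[n]}$, $W^{[m]}$ --- which are precisely the local Banach spaces of $V$ and $W$ attached to the seminorms $\max_{i\leq n}p_i$, resp.\ $\max_{j\leq m}q_j$ --- and invoke the permanence theorem that for metrizable spaces the completed projective tensor product is the projective limit of the completed tensor products of the local Banach spaces (Grothendieck; K\"othe), or alternatively, by nuclearity, the compatibility of $\widehat{\otimes}_\varepsilon$ with projective limits. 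This is more modular and makes visible where finiteness enters (every continuous seminorm on a countable product depends on finitely many coordinates, and the projections $V\to V^{[n]}$ are quotient maps, so the $\pi$-seminorms factor through the finite stages); the paper's proof is in essence a self-contained, inlined proof of exactly this special case of the permanence theorem. Your derivation of nuclearity of $V\widehat{\otimes}_\pi W$ (permanence of nuclearity under $\widehat{\otimes}_\pi$ plus metrizability) is also fine and, unlike the paper's, does not depend on first establishing the isomorphism.

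One caveat: the ``self-contained'' fallback you sketch at the end is false as stated. A jointly continuous bilinear map $B\colon V\times W\to Z$ into a complete locally convex space $Z$ does \emph{not} in general admit a single bound $r(B(v,w))\leq C\,p(v)\,q(w)$ with $p,q$ depending on finitely many coordinates; joint continuity yields such a bound for each continuous seminorm $r$ on $Z$ separately, with the number of coordinates involved depending on $r$. Consequently $B$ need not factor through any finite stage $V^{[n]}\times W^{[m]}$: the canonical map $V\times W\to V\widehat{\otimes}_\pi W$, or the map $(v,w)\mapsto(v_k\otimes w_l)_{(k,l)}$ into the product, are counterexamples. For the same reason, a continuous linear map out of $\prod_{(k,l)}V_k\otimes W_l$ factors through a finite sub-sum only when its target is normed (the identity map is a counterexample otherwise). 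The argument can be repaired: prove that both spaces represent the functor of continuous bilinear maps into \emph{Banach} targets, and then use that both candidates are complete --- hence projective limits of their local Banach spaces --- to assemble the resulting compatible families into mutually inverse continuous isomorphisms. As written, however, the factorization claims are incorrect, so your proof should rest on the cited permanence theorems (or on this repaired Yoneda-type argument), not on the fallback.
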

\begin{proof}
 Since each of the vector spaces $V_i$ and $W_i$ is a nuclear Fr\'echet space, and countable products of nuclear Fr\'echet are again nuclear Fr\'echet spaces by~\cite{TreTVSDK}, the spaces $V$ and $W$ are nuclear Fr\'echet. Moreover, the same argument shows that $V \widehat{\otimes}_\pi W$ is nuclear Fr\'echet, if equation~\eqref{Eq:RepPiTensorProductProducts} holds true. So let us show equation~\eqref{Eq:RepPiTensorProductProducts}. To this end recall f\/irst
 \cite[Section~3.7]{BouAlgI} that there is a canonical injection
 \begin{align*}
 \iota\colon \ & V \otimes W \longhookrightarrow \prod_{(i,j) \in \IN \times \IN} V_i \otimes W_j , \\
 & ( v_i )_{i\in \IN} \otimes ( w_j )_{j\in \IN} \longmapsto ( v_i \otimes w_j )_{(i,j) \in \IN \times \IN} .
 \end{align*}
 Choose norms $p_i \colon V_i \rightarrow \IR$ and $q_i \colon W_i \rightarrow \IR$. The product topology on $\prod_{(i,j) \in \IN \times \IN} V_i \otimes W_j$ then is def\/ined by the sequence of seminorms
 \begin{gather*}
 r_{k,l} \colon \ \prod_{(i,j) \in \IN \times \IN} V_i \otimes W_j \longrightarrow \IR,\qquad
 ( z_{i,j} )_{(i,j) \in \IN \times \IN} \longmapsto ( p_k \otimes_\pi q_l ) (z_{k,l}) .
 \end{gather*}
 The product topology on $V$ is generated by the seminorms
 $p_k^V \colon V \rightarrow \IR$, $\big( v_i \big)_{i\in \IN}\mapsto p_k (v_k)$,
 the topology on $W$ by the seminorms
 $q_l^W \colon W \rightarrow \IR$, $\big( w_i \big)_{i\in \IN}\mapsto q_l (w_l)$.
 Hence, the $\pi$-topology on $V \otimes W$ is generated by the seminorms
 $p_k^V \otimes_\pi q_l^W $. But since these are cross seminorms, one obtains for
 $(v_i)_{i\in \IN} \in V$ and $(w_i)_{i\in \IN} \in W$ the equality
 \begin{align*}
 p_k^V \otimes_\pi q_l^W \big( (v_i)_{i\in \IN} \otimes (w_i)_{i\in \IN} \big) & =
 p_k^V \big( (v_i)_{i\in \IN} \big) q_l^W \big( (w_i)_{i\in \IN} \big)= p_k (v_k) q_l (w_l) \\
 & = p_k \otimes_\pi q_l (v_k \otimes w_l ) =
 r_{k,l} \big( (v_i \otimes w_j)_{(i,j)\in \IN \times \IN}\big) .
 \end{align*}
 This entails $ p_k^V \otimes_\pi q_l^W = r_{k,l} \circ \iota$, or in other words that the $\pi$-topology on $V \otimes W$ coincides with the pull-back of the product topology on $\prod_{(i,j) \in \IN \times \IN} V_i \otimes W_j$ by the embedding~$\iota$. The claim now follows, if we can yet show that the image of $\iota$ is dense in its range. To prove this let $z = ( z_{i,j})_{(i,j)\in \IN \times \IN} $ be an element of the product $\prod_{(i,j) \in \IN \times \IN} V_i \otimes W_j$. Choose representations
 \begin{gather*}
 z_{i,j} = \sum_{l=1}^{n_{i,j}} v_{i,j,l} \otimes w_{i,j,l}, \qquad \text{where} \quad v_{i,j,l}\in V_i,\quad w_{i,j,l}\in V_j.
 \end{gather*}
Put $v_{i,j,l} = 0$ and $w_{i,j,l} = 0$, if $l > n_{i,j}$. Let $\iota_i^V \colon V_i \rightarrow V$ the embedding of the $i$-th factor in $V$, i.e., the map which associates to $v_i \in V_i$ the family $(v_j)_{j\in \IN}$, where $v_j :=0$, if $j \neq i$. Likewise, denote by $\iota_i^W \colon W_i \hookrightarrow W$ the embedding of the $i$-th factor in $W$. Then def\/ine for $n\in \IN$
 \begin{gather*}
 z_n := \sum_{i,j\leq n} \sum_{l\in \IN} \iota_i^V (v_{i,j,l}) \otimes \iota_j^W (w_{i,j,l}) ,
 \end{gather*}
 and note that by construction the sum on the right side is f\/inite. The sequence $(z_n)_{n\in\IN}$ then is a family in $V\otimes_\pi W$. By construction, it is clear that $ \lim\limits_ {n \rightarrow \infty} \iota( z_n ) = z$. The proof is f\/inished.
\end{proof}

\begin{Theorem}\label{ThmAppProjLimTensProd} Assume that $V$ and $W$ are projective limits of projective systems of finite dimen\-sional real vector spaces $(V_i,\lambda_{ij})$ and $( W_i , \mu_{ij})$, respectively. Denote by
\begin{gather*}
\lambda_i \colon \ V \longrightarrow V_i,\qquad \text{respectively by}\qquad \mu_i \colon \ W \longrightarrow W_i,
\end{gather*}
the corresponding canonical maps. The completed $\pi$-tensor product $V \widehat{\otimes}_\pi W$ together with the family of canonical maps
\begin{gather*}
\lambda_i \widehat{\otimes}_\pi \mu_i \colon \ V \widehat{\otimes}_\pi W \longrightarrow V_i \otimes W_i
\end{gather*}
 then is a projective limit of the projective system $( V_i \otimes W_i , \lambda_{ij} \otimes \mu_{ij})$ within the category of locally convex topological vector spaces. Moreover, both $V$ and $W$ are nuclear, hence $V \widehat{\otimes}_\pi W = V \widehat{\otimes}_\varepsilon W $.
 \end{Theorem}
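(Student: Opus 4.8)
The plan is to deduce everything from the preceding theorem by realizing $V$ and $W$ as closed subspaces of the full products $\widetilde V := \prod_{i\in\IN} V_i$ and $\widetilde W := \prod_{i\in\IN} W_i$. Since any projective limit is canonically isomorphic to the space of threads, I may assume without loss of generality that
\[
 V = \Big\{ (v_i)_{i\in\IN} \in \widetilde V \mid \lambda_{ij} v_j = v_i \text{ for } i\leq j \Big\},
\]
with $\lambda_i$ the restriction of the $i$-th projection, and analogously for $W$; both are closed linear subspaces, being cut out by the continuous thread relations. By the preceding theorem $\widetilde V$ and $\widetilde W$ are nuclear Fr\'echet spaces. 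As nuclearity passes to arbitrary linear subspaces and the Fr\'echet property to closed ones, $V$ and $W$ are nuclear Fr\'echet. This already settles the last sentence of the statement, since by the Remark above the $\pi$- and $\varepsilon$-topologies agree on the tensor product of nuclear spaces, whence $V \widehat{\otimes}_\pi W = V \widehat{\otimes}_\varepsilon W$.

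Next I would exploit that the injective tensor product respects subspaces: for closed subspaces $V \subset \widetilde V$ and $W \subset \widetilde W$ the natural map $V \widehat{\otimes}_\varepsilon W \to \widetilde V \widehat{\otimes}_\varepsilon \widetilde W$ is a topological isomorphism onto the closure of $V \otimes W$. Combining this with the identity $V\widehat{\otimes}_\pi W = V\widehat{\otimes}_\varepsilon W$, with its analogue for $\widetilde V, \widetilde W$, and with the isomorphism $\widetilde V \widehat{\otimes}_\pi \widetilde W \cong \prod_{(k,l)\in\IN\times\IN} V_k\otimes W_l$ from the preceding theorem, I obtain a realization of $V \widehat{\otimes}_\pi W$ as the closure of $V\otimes W$, carrying the subspace topology, inside the product $P := \prod_{(k,l)} V_k\otimes W_l$.

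It then remains to identify this closure. On threads one computes that the image of $(v_i)\otimes(w_j)$ in $P$ has $(k,l)$-component $v_k\otimes w_l$, which satisfies the separate thread relations $(\lambda_{kk'}\otimes \id)z_{k'l}=z_{kl}$ and $(\id\otimes\mu_{ll'})z_{kl'}=z_{kl}$; hence $V\otimes W$, and so its closure, lies in the closed subspace $L\subset P$ defined by these relations. For the reverse inclusion I would show that $V\otimes W$ is dense in $L$: given a thread in $L$ and finitely many indices bounded by some $j$, its diagonal component $z_{jj}\in V_j\otimes W_j$ lies, by the Mittag--Leffler property of projective systems of finite dimensional vector spaces, already in $\lambda_j(V)\otimes \mu_j(W)$, so it can be written as $\sum_m a_m\otimes b_m$ with $a_m\in\lambda_j(V)$ and $b_m\in\mu_j(W)$; lifting to $\widehat a_m\in V$ and $\widehat b_m\in W$ yields an element of $V\otimes W$ whose image in $P$ agrees with the given thread in all coordinates $(k,l)$ with $k,l\leq j$. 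This gives $V\widehat{\otimes}_\pi W = L$. Finally, since the diagonal $\{(i,i)\}$ is cofinal in $\IN\times\IN$, restriction to the diagonal is a topological isomorphism
\[
 L \;\cong\; \lim\limits_{\longleftarrow \atop i\in\IN}\big(V_i\otimes W_i\big)
\]
intertwining the projections with $\lambda_i\widehat{\otimes}_\pi\mu_i$ and the bonding maps with $\lambda_{ij}\otimes\mu_{ij}$, which is precisely the assertion.

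The main obstacle is this last identification of the closure with $L$. The density step genuinely needs the Mittag--Leffler reduction to the images $\lambda_j(V),\mu_j(W)$, since the transition maps $\lambda_{ij},\mu_{ij}$ are not assumed surjective; and one must also invoke the subspace-compatibility of $\widehat{\otimes}_\varepsilon$, which fails for $\widehat{\otimes}_\pi$ and is the reason the nuclearity of $V$ and $W$ is used in the course of the argument rather than merely recorded as an afterthought.
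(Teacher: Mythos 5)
Your proof is correct, but it takes a genuinely different route from the paper's. The paper never embeds $V$ and $W$ into the full products: instead it chooses, for each $i\in\IN^*$, a linear splitting $f_i\colon V_{i-1}\to V_i$ of the transition map $\lambda_{i-1i}$, sets $\widetilde V_0:=V_0$ and $\widetilde V_i:=\ker\lambda_{i-1i}$, and verifies by an inductive construction of maps $\widetilde\lambda_i\colon\prod_j\widetilde V_j\to V_i$ that the product $\prod_j\widetilde V_j$ is itself a projective limit of $\big(V_i,\lambda_{ij}\big)$; the theorem then follows by applying the preceding product theorem to $\prod_j\widetilde V_j$ and $\prod_j\widetilde W_j$ and identifying $\prod_{(i,j)}\widetilde V_i\otimes\widetilde W_j$ with the projective limit of the spaces $V_k\otimes W_k$ via the finite blocks $i,j\le k$. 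The two arguments have complementary advantages. The paper's is more self-contained, using beyond the preceding theorem only finite-dimensional linear algebra; but the splittings $f_i$ exist only if the transition maps are surjective, a hypothesis not stated in the theorem (harmless for the paper's applications, where the $\lambda_{ij}$ are tangent maps of surjective submersions). Your argument needs no surjectivity at all: that is exactly what your Mittag--Leffler step buys, since it identifies $\lambda_j(V)\otimes\mu_j(W)$ with the stable image $\bigcap_{k\ge j}\lambda_{jk}(V_k)\otimes\mu_{jk}(W_k)$ without ever lifting along bonding maps. The price is heavier (though entirely standard) machinery: permanence of nuclearity under subspaces, the subspace-compatibility of $\widehat{\otimes}_\varepsilon$ (which, as you rightly stress, fails for $\widehat{\otimes}_\pi$, so the detour through the injective topology is essential and correctly placed), and the Mittag--Leffler theorem for countable systems of finite-dimensional spaces. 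A further small dividend of your route is that the resulting isomorphism is canonical, whereas the paper's depends on the chosen splittings $f_i$, $g_i$.
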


\begin{proof} First observe that $\big( V_i \otimes W_i , \lambda_{ij} \otimes \mu_{ij}\big)$ is a projective systems of f\/inite dimensional real vector spaces, indeed. Next recall that projective limits of nuclear Fr\'echet spaces are nuclear by~\cite{TreTVSDK}. This proves the second claim. It remains to show the f\/irst one. To this end put $\widetilde{V}_0 := V_0$, $\widetilde{W}_0:= W_0$, and denote for every $i\in\IN^*$ by $\widetilde{V}_i $ be the kernel of the map $\lambda_{i-1i}$ and by $\widetilde{W}_i $ the kernel of $\mu_{i-1i}$. Moreover, choose for every $i\in\IN^*$ a splitting $f_i \colon V_{i-1} \rightarrow V_i$ of $\lambda_{i-1i}$, and a~splitting $g_i \colon W_{i-1} \rightarrow W_i$ of $\mu_{i-1i}$. Put
\begin{gather*}
 \widetilde{V} := \prod\limits_{i\in \IN} \widetilde{V}_i \qquad \text{and} \qquad \widetilde{W}:= \prod\limits_{i\in \IN} \widetilde{W}_i .
 \end{gather*}
Let $\pi_i^{\widetilde{V}} \colon \widetilde{V} \rightarrow \widetilde{V}_i$ be the projection onto the $i$-th factor of $\widetilde{V}$, and $\pi_j^{\widetilde{W}} \colon \widetilde{W} \rightarrow \widetilde{W}_j$ the projection on the $j$-th factor of $\widetilde{W}$.

Now we inductively construct $\widetilde{\lambda}_i \colon \widetilde{V} \rightarrow V_i$ and $\widetilde{\mu}_i \colon \widetilde{W}\rightarrow W_i$. First, put $\widetilde{\lambda}_0 := \pi_0^{\widetilde{V}}$ and $\widetilde{\mu}_0 := \pi_0^{\widetilde{W}}$. Next, assume that we have constructed $\widetilde{\lambda}_0, \ldots,\widetilde{\lambda}_j$ and $\widetilde{\mu}_0,\ldots , \widetilde{\mu}_j$ such that for $i\leq k \leq j$
 \begin{gather} \label{Eq:CompCanMaps}
 \widetilde{\lambda}_i = \lambda_{ik} \circ \widetilde{\lambda}_k \qquad \text{and} \qquad \widetilde{\mu}_i = \mu_{ik} \circ \widetilde{\mu}_k .
 \end{gather}
Then we def\/ine $\widetilde{\lambda}_{j+1}\colon \widetilde{V} \rightarrow V_{j+1}$ and $\widetilde{\mu}_{j+1}\colon \widetilde{W} \rightarrow W_{j+1}$ by
 \begin{gather*}
 \widetilde{\lambda}_{j+1} (v) = \pi_{j+1}^{\widetilde{V}} (v) + f_{j+1} \widetilde{\lambda}_j (v)
 \qquad \text{and} \qquad \widetilde{\mu}_{j+1} (w) = \pi_{j+1}^{\widetilde{W}} (w) + g_{j+1} \widetilde{\lambda}_j (w),
 \end{gather*}
 where $v\in \widetilde{V}$, and $w\in \widetilde{W}$. By assumption on $f_{j+1}$ and $g_{j+1}$ one concludes that
 \begin{gather*}
 \widetilde{\lambda}_j = \lambda_{j+1j} \circ \widetilde{\lambda}_{j+1} \qquad \text{and} \qquad \widetilde{\mu}_j = \mu_{j+1j} \circ \widetilde{\mu}_{j+1} ,
 \end{gather*}
which entails that equation~\eqref{Eq:CompCanMaps} holds true for $i\leq k \leq j+1$. We now claim that $\widetilde{V}$ together with the family $(\widetilde{\lambda}_i)$ is a projective limit of $( V_i , \lambda_{ij})$, and likewise for $\widetilde{W}$. We only need to prove the claim for $\widetilde{V}$. Let $Z$ be a locally convex topological vector space, and $\nu_i \colon Z \rightarrow V_i$ a family of continuous linear maps such that $\nu_i = \lambda_{ij} \circ \nu_j $ for $i\leq j$. Put for every $z\in Z$
 \begin{gather*}
 \widetilde{\nu}_0 (z) := \nu_0(z) \qquad \text{and} \qquad \widetilde{\nu}_i (z) := \nu_i (z) - f_i ( \nu_{i-1} (z))) \qquad \text{for} \quad i\in \IN^*.
 \end{gather*}
 Then $\widetilde{\nu}_i (z)\in \widetilde{V_i}$ for all $i\in \IN$, and
 \begin{gather*}
 \nu \colon Z \ \longrightarrow \widetilde{V}, \qquad z \longmapsto \big( \widetilde{\nu}_i (z) \big)_{i\in \IN}
 \end{gather*}
 is well-def\/ined, linear, and continuous. Moreover, it follows by induction on $i\in\IN$ that
 \begin{gather*}
 \widetilde{\lambda}_i \nu = \nu_i.
 \end{gather*}
 For $i=0$ this is clear, so assume that we have shown this for some $i\in \IN$. Then, for $z\in Z$,
 \begin{gather*}
 \widetilde{\lambda}_{i+1} \nu (z) = \nu_{i+1} (z) - f_{i+1} ( \nu_i (z)) + f_{i+1} \widetilde{\lambda}_i ( \nu (z))= \nu_{i+1} (z),
 \end{gather*}
which f\/inishes the inductive argument. Assume that $\nu' \colon Z \rightarrow \widetilde{V}$ is another continuous linear map such that $\widetilde{\lambda}_i \nu' = \nu_i$ for all $i\in \IN$.
 First, this entails that
 \begin{gather*}
 \pi_0^{\widetilde{V}} \nu' = \widetilde{\lambda}_0 \nu' = \nu_0 = \widetilde{\nu}_0.
 \end{gather*}
 Assume that $ \pi_i^{\widetilde{V}} \nu' = \widetilde{\nu}_i$ for some $i\in \IN$. Then
 \begin{gather*}
 \pi_{i+1}^{\widetilde{V}} \nu' = \widetilde{\lambda}_{i+1} \nu' - f_{i+1}\widetilde{\lambda}_i \nu' = \nu_{i+1} - f_{i+1} \nu_i = \widetilde{\nu}_{i+1} .
 \end{gather*}
 Hence, one obtains, for all $i\in \IN$,
 \begin{gather*}
 \pi_i^{\widetilde{V}} \nu' = \widetilde{\nu}_i = \pi_i^{\widetilde{V}} \nu,
 \end{gather*}
which proves $\nu' = \nu$. So $\widetilde{V}$ is a projective limit of $( V_i , \lambda_{ij})$, and $\widetilde{W}$ a projective limit of $(W_i , \mu_{ij})$. Moreover, $\widetilde{V}$ is canonically isomorphic to~$V$, and~$\widetilde{W}$ to~$W$. The theorem is now proved, if we can show that $\widetilde{V} \otimes_\pi \widetilde{W}$ together with the family of canonical maps $ \widetilde{\lambda}_i \widehat{\otimes}_\pi \widetilde{\mu}_i \colon \widetilde{V} \otimes_\pi \widetilde{W} \rightarrow V_i \otimes W_i$ is a projective limit of the projective system $( V_i \otimes W_i , \lambda_{ij} \otimes \mu_{ij})$. But this is clear, since by the preceding theorem,
 \begin{gather*}
 \widetilde{V} \otimes_\pi \widetilde{W} \cong \prod_{(i,j)\in \IN \times \IN} \widetilde{V}_i \otimes \widetilde{W}_j
 \cong \lim_{\longleftarrow \atop k \in \IN} \prod_{(i,j)\in \IN \times \IN \atop i,j \leq k} \widetilde{V}_i \otimes \widetilde{W}_j
 \end{gather*}
 and, for $k \in \IN$,
 \begin{gather*}
 \prod_{(i,j)\in \IN \times \IN \atop i,j \leq k} \widetilde{V}_i \otimes \widetilde{W}_j \cong
 \prod_{i \leq k} \widetilde{V}_i \otimes \prod_{j \leq k} \widetilde{W}_j \cong V_k \otimes W_k .\tag*{\qed}
 \end{gather*}\renewcommand{\qed}{}
\end{proof}

\subsection*{Acknowledgements}
The f\/irst named author (B.G.) is indebted to W.M.~Seiler for many discussions on jet bundles, and would also like to thank B.~Kruglikov and A.D.~Lewis for helpful discussions. B.G.~has been f\/inancially supported by the SFB 647: Raum--Zeit--Materie, and would like to thank the University of Colorado at Boulder for its hospitality. The second named author (M.P.) has been partially supported by NSF grant DMS 1105670 and by a Simons Foundation collaboration grant, award nr.~359389. M.P.~would also like to thank Humboldt-University, Berlin and the Max-Planck-Institute for Mathematics of the Sciences, Leipzig for their hospitality. Last but not least the authors thank the anonymous referees for constructive advice which helped to improve the paper.

\pdfbookmark[1]{References}{ref}
\LastPageEnding

\end{document}